\documentclass[12pt]{article}
\usepackage{amsmath}
\usepackage{tikz} 
\usetikzlibrary{arrows.meta}
\usepackage{tikz}
\usetikzlibrary{arrows.meta,backgrounds,fit}
\usepackage{amsmath}
\usetikzlibrary{arrows.meta, bending, calc}
\usepackage{amssymb,amsfonts,amsmath}
\usepackage{float}
\usepackage{booktabs}
\usepackage{adjustbox}
\usepackage{amsmath,amssymb}
 \usepackage{amsmath}
 \usepackage{tikz} 
 \usetikzlibrary{arrows.meta}
 \usepackage{tikz}
 \usetikzlibrary{arrows.meta,backgrounds,fit}
 \usepackage{amsmath}
 \usetikzlibrary{arrows.meta, bending, calc}
 \usepackage{amssymb,amsfonts,amsmath}
 \usepackage{float}
 \usepackage{booktabs}
 \usepackage{adjustbox}
 \usepackage{amsmath,amssymb}

\usepackage{array}
\usepackage{graphicx}
\usepackage{multirow}
\usepackage{color}
\def\blue{\textcolor{blue}}
\def\red{\textcolor{red}}

\def\blue{\textcolor{blue}}
\usepackage{amsfonts, amsthm, amsmath}
\allowdisplaybreaks[4]
\usepackage{pgfplots}
\usepackage{xcolor}
\usepackage{afterpage}
\usepackage{rotating}

\usepackage{tikz}
\usepackage{lmodern}

\usetikzlibrary{decorations.pathmorphing}
\tikzset{
	redline/.style={red, thick, ->, shorten >=1pt, shorten <=1pt}
}

\usepackage{xcolor}
\usepackage{framed}
\usepackage{graphics}

\usepackage{amssymb}

\usepackage{amscd}

\usepackage{t1enc}

\usepackage[mathscr]{eucal}

\usepackage{indentfirst}

\usepackage{enumitem}

\usepackage{enumitem}

\usepackage{graphicx}

\usepackage{graphics}

\usepackage{pict2e}

\usepackage{mathrsfs}

\usepackage{comment}

\usepackage{enumitem}

\usepackage{enumerate}

\usepackage{hyperref}

\hypersetup{colorlinks=true}

\usepackage{color}
\usepackage{epic}
\usepackage{framed}
\makeatletter
\IfFileExists{mathabx.sty}{}{%
	\expandafter\def\csname ver@mathabx.sty\endcsname{2026/08/02 fallback}%
}
\makeatother
\usepackage{mathabx}
\usepackage{booktabs}
\usetikzlibrary{decorations.pathreplacing,calc}
\usepackage{makecell}
\newcolumntype{V}{!{\vrule width 2pt}}

\numberwithin{equation}{section}

\def\blue{\textcolor{blue}}
\def\red{\textcolor{red}}

\def\blue{\textcolor{blue}}

\theoremstyle{plain}

\newtheorem{theorem}{Theorem}[section]

\newtheorem{conjecture}[theorem]{Conjecture}

\newtheorem{lemma}[theorem]{Lemma}
\newtheorem{definition}[theorem]{Definition}
\newtheorem{example}[theorem]{Example}

\definecolor{handred}{RGB}{220, 0, 0}
\definecolor{handgreen}{RGB}{0, 150, 0}

\def\des{\mathsf{des}}

\def\maj{\mathsf{maj}}

\def\fmaj{\mathsf{fmaj}}
\def\sint{\mathsf{sint}}
\def\nsint{\mathsf{nsint}}

\def\Lint{\mathsf{Lint}}

\def\Leaf{\mathsf{Leaf}}
\def\Rint{\mathsf{Rint}}
\def\NT{\mathsf{NT}}

\def\l{\mathsf{left}}

\def\Sint{\mathsf{Sint}}

\def\l{\mathsf{left}}

\def\lint{\mathsf{lint}}

\def\B{\mathcal{T}}
\def\AT{\mathcal{AT}}

\def\leaf{\mathsf{leaf}}

\def\neg{\mathsf{neg}}

\def\ddes{\mathsf{ddes}}

\begin{document}
	\begin{center}
		{\Large\bf   The $q$-analogues of $\gamma$-positivity of Eulerian polynomials via group actions}
	\end{center}
	
	\begin{center}
		{\small  Taifeng Ding,   Lintong Wang,   Sherry H.F. Yan$^*$}\\[4pt]
		 
	 Department of Mathematics, Zhejiang Normal University\\
		Jinhua 321004, P.R. China
	\end{center}
	\footnote{$^*$Corresponding author. \\{\em E-mail address:}   hfy@zjnu.edu.cn (S.H.F. Yan).}
	
	\vskip0.1in
	\noindent {\bf Abstract.}
	 	Han, Jouhet and Zeng established $q$-analogues of the 
	 $\gamma$-expansion formulas for Eulerian polynomials of types $A$ and $B$.
	 Combinatorial interpretations of the corresponding coefficients
	 $a_{n,k}(q)$ and $b_{n,k}(q)$, however, remained open.
	 In this paper, we provide combinatorial interpretations for these
	 coefficients by using the model of increasing binary trees, thereby
	 resolving a problem posed by Han, Jouhet and Zeng.
	 Our combinatorial approach consists of three main steps:
	 
	 \begin{itemize}
	 	\item construct a Carlitz-type insertion bijection for increasing
	 	binary trees and derive a new combinatorial interpretation of
	 	Carlitz's $q$-Eulerian polynomials of type $A$ in terms of such trees;
	 	
	 	\item introduce a generalized Foata--Strehl action on increasing
	 	binary trees to interpret the coefficients $a_{n,k}(q)$;
	 	
	 	\item derive a new combinatorial interpretation for the $q$-Eulerian
	 	polynomials of type $B$ introduced by Chow and Gessel in terms of
	 	increasing binary trees of type $B$, and develop a generalized
	 	Foata--Strehl action on these trees to interpret the coefficients
	 	$b_{n,k}(q)$.
	 \end{itemize}
	 
	 We further give combinatorial interpretations for the quotients ${a_{n,k}(q)/(-q;q)_{k-1}}$ and 
	 ${b_{n,k}(q)/(1+q)^k(-q;q^2)_k}$ 
	 in terms of Andr\'e trees and a certain class of increasing
	 binary trees of type $B$, respectively. As an application of the latter
	 interpretation, we obtain a combinatorial  interpretation   for   a $q$-analogue of the secant number    and prove the positivity conjecture of Han, Jouhet and
	 Zeng.

	\vskip0.1in
	\noindent {\bf Keywords}:    $\gamma$-positivity, Eulerian polynomial, increasing binary tree, Andr\'e tree.

	\section{Introduction}

	Let $\mathfrak{S}_{n}$ denote the set of permutations of $[n]:=\{1,2,\ldots, n\}$. A permutation $\pi\in \mathfrak{S}_{n}$ is usually written in one-line notation as $\pi=\pi_1\pi_2\cdots \pi_n$. Given a permutation  $\pi\in \mathfrak{S}_{n}$,  an index $i$, $1\leq i< n$,  is called a  {\em descent  } of $\pi$ if $\pi_i>\pi_{i+1}$.    
	Define  the {\em descent set}  of $\pi$  to be
	$$
	\mathrm{Des}(\pi)=\{i\in[n-1]\mid \pi_i>\pi_{i+1}\}.
	$$
	Let $\mathrm{des(\pi)}=|\mathrm{Des}(\pi)|$.  
	The {\em major index} of $\pi$, denoted by $\mathrm{maj(\pi)}$, is defined to be
	$$
	\mathrm{maj(\pi)}=\sum\limits_{i\in \mathrm{Des(\pi)} } i.
	$$
	For example, if we let  $\pi=475398216$, then $\mathrm{Des}(\pi)=\{2,3,5,6,7\}$, $\mathrm{des}(\pi)=5$ and $\mathrm{maj}(\pi)=2+3+5+6+7=23$.
	The classical {\em Eulerian polynomial} (see~\cite[pp.~32-33]{ST})  $A_n(t)$ is  defined as 
	$$
	A_n(t)=\sum_{\pi\in \mathfrak{S}_{n}}t^{\mathrm{des}(\pi)}.
	$$

	Let $h(t)=\sum_{i=0}^{n}a_it^i$ be a polynomial with nonnegative real coefficients.
	We say that $h(t)$ is {\em $\gamma$-positive} if it can be expanded as
	$$
	h(t)=\sum_{k=0}^{\lfloor {n\over 2}\rfloor} \gamma_k t^k(1+t)^{n-2k}
	$$
	with the {\em$\gamma$-coefficient} $\gamma_k\geq 0$.    
	One of the most remarkable facts about the Eulerian polynomials is the $\gamma$-positivity discovered
	by Foata and Sch\"{u}tzenberger \cite{FS70} which asserts that the Eulerian polynomial $A_n(t)$ has the following $\gamma$-expansion:
	\begin{equation}\label{gamma-A_n(t)}
	A_n(t)=\sum\limits_{k=1}^{\lfloor{(n+1)/ 2}\rfloor}  a_{n,k}t^{k-1}(1+t)^{n+1-2k},
	\end{equation}
	where 
	$$
	a_{n,k}=\# \{\sigma\in \mathfrak{S}_n\mid  \des(\sigma)=k-1,  \ddes(\sigma)=0\}.
	$$
	Here $\ddes(\sigma)$ means the number of double descents of $\sigma$,  that is, the number of indices $i$ ($1\leq i\leq n$ ) such that $\sigma_{i-1}>\sigma_{i}>\sigma_{i+1}$ with the convention that $\sigma_0=\sigma_{n+1}=0$.
	An elegant combinatorial proof of (\ref{gamma-A_n(t)}) via a group action was later constructed by Foata and Strehl~\cite{FS74}.  Since then, $\gamma$-positive  polynomials have  been extensively studied,  see, for example,  \cite{Athanasiadis2018gamma, Branden2008actions,  ChenFuYan2023gessel, JiLin2024binomial, LinMaZhang2021multiperm, LinXuZhao2022multigamma, LinZeng2015gamma, YanHuangYang2022partialgamma, YanYangLin2026bigamma}. 
	Foata and Sch\"{u}tzenberger \cite{FS70}  also proved that for all $n,k\geq 1$, 
	\begin{equation}\label{eq-d_{n,k}}
		a_{n,k}=2^{k-1}d_{n,k},
		\end{equation}
		where  $d_{n,k}$ satisfies the following recurrence relation:
	$$
d_{n,k} = kd_{n-1,k} + (n+2- 2k)d_{n-1,k-1}
	$$
 for $n\geq 2$ and $1\le k\le \lfloor (n+1)/2\rfloor$, with initial condition $d_{1,1}=1$, and $d_{n,k}=0$ whenever $k\le 0$ or $k>\lfloor (n+1)/2\rfloor$.   
	 Actually,  the sum $\sum_k d_{n,k}$ is precisely the {\em Euler number} $E_n$. 
	  Brändén \cite{Branden2008actions} proved a $(p,q)$-analogue of the $\gamma$-expansion formula for Eulerian polynomials and conjectured the divisibility of the $\gamma$-coefficient $\gamma_{n,k}(p,q)$ by $(p+q)^k$.   Shin and Zeng \cite{ShinZeng2010} showed that the fraction $\gamma_{n,k}(p,q)/(p+q)^k$ is a polynomial in $\mathbb N[p,q]$.   A combinatorial interpretation of the  fraction in terms of André permutations  was provided by Pan and Zeng \cite{PanZeng2021}.

	  For $k\geq 1$, let $[k]_q=1+q+q^2+\cdots +q^{k-1}$. Let $(x;q)_n=(1-x)(1-xq)\cdots(1-xq^{n-1})$ and set $(x;q)_0=1$.  
	 The  Carlitz's $q$-Eulerian polynomial of type $A$ \cite{Carlitz} is defined by
	 \begin{equation}\label{eq:qEulerA-def}
	 	A_n(t,q)=\sum_{\sigma\in \mathfrak{S}_n} t^{\des(\sigma)} q^{\maj(\sigma)}
	 	=\sum\limits_{k= 1}^{n}A_{n,k}(q)t^{k-1}
	 \end{equation}
	 whose  factorial generating function is given by 
	  \begin{equation}\label{eq:qEulerA-gf}
	 	\frac{A_n(t,q)}{(t;q)_{n+1}}
	 	=\sum_{k\geq 0} [k+1]_q^n\, t^k.
	 \end{equation}
	   Carlitz~\cite{Carlitz} showed that $A_{n,k}(q)$ satisfies the recurrence relation:
	 \begin{equation}\label{eq:qEulerA-rec}
	 	A_{n,k}(q) = [k]_qA_{n-1,k}(q) + q^{k-1}\,[n+1-k]_q A_{n-1,k-1}(q),\quad 1\le k\le n.
	 \end{equation}

	 Relying on the recurrence relation (\ref{eq:qEulerA-rec}), Han, Jouhet and Zeng \cite{HanJouhetZeng2013qtriangles} derived the following $q$-analogue of (\ref{gamma-A_n(t)}).
	 
	 \begin{theorem}[\cite{HanJouhetZeng2013qtriangles}, Theorem 1]
	 	For $n\geq 1$, there exist polynomials $a_{n,k}(q)\in\mathbb{N}[q]$ such that the $q$-Eulerian polynomials $A_n(t,q)$ can be written as
	 	\begin{equation}\label{eq-Han}
	 		A_n(t,q)=\sum_{k=1}^{\lfloor (n+1)/2\rfloor} a_{n,k}(q)\, t^{k-1}\, (-tq^k;q)_{n+1-2k}.
	 	\end{equation}
	 	Moreover, the polynomials $a_{n,k}(q)$ satisfy the following recurrence relation:
	 	\begin{equation}\label{eq-a_{n,k}(q)}
	 		a_{n,k}(q) = [k]_q\, a_{n-1,k}(q)+ (1+q^{k-1})q^{k-1}\,[n+2-2k]_q\, a_{n-1,k-1}(q)
	 	\end{equation}
	 	for $n\ge 2$ and $1\le k\le \lfloor (n+1)/2\rfloor$, with initial condition $a_{1,1}(q)=1$, and $a_{n,k}(q)=0$ whenever $k\le 0$ or $k>\lfloor (n+1)/2\rfloor$.
	 \end{theorem}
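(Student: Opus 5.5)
The plan is to reverse the logic. I define $a_{n,k}(q)$ by the recurrence (\ref{eq-a_{n,k}(q)}) with $a_{1,1}(q)=1$. Then $a_{n,k}(q)\in\mathbb{N}[q]$ is immediate by induction, because every coefficient in the recurrence ($[k]_q$, $(1+q^{k-1})q^{k-1}$, and $[n+2-2k]_q$ for $k\le\lfloor (n+1)/2\rfloor$, so $n+2-2k\ge 1$) lies in $\mathbb{N}[q]$. What remains is to show that the right-hand side of (\ref{eq-Han}), which I call $B_n(t,q)$, equals $A_n(t,q)$.

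For this I would not expand $B_n$ into the coefficients $A_{n,k}(q)$ directly. Instead I use a functional form of Carlitz's recurrence (\ref{eq:qEulerA-rec}). Multiplying by $t^{k-1}$ and summing gives
\[
A_n(t,q)=\bigl(1+[n]_q\,qt\bigr)A_{n-1}(tq,q)\text{-type terms},
\]
or more precisely the standard operator identity
\[
A_n(t,q)=\frac{1-tq^{n}}{1-q}A_{n-1}(t,q)-\frac{q(1-t)}{1-q}A_{n-1}(tq,q),
\]
which I derive from $[k]_q=(1-q^k)/(1-q)$ and $q^{k-1}[n+1-k]_q=(q^{k-1}-q^n)/(1-q)$. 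So $A_n$ is characterized by $A_1=1$ together with the linear operator $\mathcal{L}_n$: $f\mapsto \bigl((1-tq^n)f(t)-q(1-t)f(tq)\bigr)/(1-q)$.

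The task is then to check that $B_n=\mathcal{L}_n B_{n-1}$. By linearity it suffices to apply $\mathcal{L}_n$ to each basis function $g_{n-1,k}(t)=t^{k-1}(-tq^k;q)_{n-2k}$ and write the result as a combination of $g_{n,k}$ and $g_{n,k+1}$. I expect
\[
\mathcal{L}_n\, g_{n-1,k}=[k]_q\, g_{n,k}+(1+q^{k})q^{k}[n-2k]_q\, g_{n,k+1}.
\]
To verify this I use $g_{n-1,k}(tq)=q^{k-1}t^{k-1}(-tq^{k+1};q)_{n-2k}$. I factor out the common product $t^{k-1}(-tq^{k+1};q)_{n-2k-1}$, after which everything reduces to an identity among polynomials of degree at most $2$ in $t$. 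That identity is checked by comparing the coefficients of $t^0$, $t^1$ and $t^2$, or equivalently by evaluating at convenient points such as $t=0$, $t=-q^{-k}$ and $t=-q^{-(n-k)}$. Once it holds, reindexing $k\to k-1$ in the second term reproduces exactly the recurrence (\ref{eq-a_{n,k}(q)}) for the coefficients of $B_n$. The boundary conventions also work out: the term $[n-2k]_q$ vanishes when $2k=n$, so no index exceeds $\lfloor (n+1)/2\rfloor$.

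The main obstacle is this local identity. The awkward point is that $g_{n-1,k}$ has $n-2k$ factors while $g_{n,k}$ has $n+1-2k$ factors and $g_{n,k+1}$ has $n-1-2k$, so the factorization has to be chosen carefully and the factor $1-tq^n$ has to be matched with the endpoint factor $1+tq^{n-k}$. If the direct degree-2 comparison becomes messy, I would first test the case $k=1$, $n=3$ by hand to fix the exact coefficients, and then run a general induction on $n-2k$.
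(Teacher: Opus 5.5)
Your proof is correct, and the step you single out as the main obstacle goes through directly. For $n\ge 2k+1$ put $P=\prod_{j=k+1}^{n-k-1}(1+tq^j)$. Then
$g_{n-1,k}(t)=t^{k-1}(1+tq^k)P$, $g_{n-1,k}(tq)=q^{k-1}t^{k-1}(1+tq^{n-k})P$, $g_{n,k}(t)=t^{k-1}(1+tq^k)(1+tq^{n-k})P$ and $g_{n,k+1}(t)=t^kP$. After multiplying by $1-q$, both sides of your local identity equal $t^{k-1}P$ times
\[
(1-q^k)+2(q^k-q^n)\,t+q^n(1-q^k)\,t^2 .
\]
The case $n=2k$ must be done separately, since the product $P$ would have $-1$ factors. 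It is one line: $\mathcal{L}_{2k}\,t^{k-1}=[k]_q\,t^{k-1}(1+tq^k)=[k]_q\,g_{2k,k}$, matching $[n-2k]_q=0$. Two cosmetic points: delete the garbled first display (the operator identity after ``more precisely'' is the right one), and rename your right-hand side, since $B_n(t,q)$ already denotes the type-$B$ polynomial in this paper.

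Your route is genuinely different from what the paper does. The paper never proves this statement. It quotes it from Han--Jouhet--Zeng, who obtained it from Carlitz's recurrence \eqref{eq:qEulerA-rec} (your starting point as well), and it uses the recurrence for $a_{n,k}(q)$ as a black box in Section~6 when proving $a_{n,k}(q)=(-q;q)_{k-1}d_{n,k}(q)$. What the paper proves instead is the expansion with explicit coefficients $a_{n,k}(q)=\sum_{T\in\B^*_{n,k}}q^{\alpha(T)}$. The generalized Foata--Strehl action cuts $\B_n$ into orbits, each contributing $t^{k-1}q^{\alpha(\bar T)}\prod_{j=k}^{n-k}(1+tq^j)$ (Theorem~\ref{thm-expansion}). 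The Carlitz-type insertion $\psi$ then gives a bijection $\Phi$ with $(\des,\maj)\pi=(\l,\maj)\Phi(\pi)$ (Theorem~\ref{thm-Phi}). That approach buys a combinatorial meaning for the coefficients, with nonnegativity read off from the interpretation rather than from the recurrence. Yours buys a short, self-contained proof of both assertions at once, but no interpretation.

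One remark links the two. Each $t^{k-1}(-tq^k;q)_{n+1-2k}$ has lowest-order term $t^{k-1}$, so these polynomials are linearly independent and the coefficients in \eqref{eq-Han} are unique. This makes ``the polynomials'' in the recurrence clause unambiguous, and it shows that the paper's tree sums coincide with your recursively defined $a_{n,k}(q)$.
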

	 
	 Recall that, intuitively, a {\em signed permutation }on $[n]$ can be viewed as an ordinary permutation on $[n]$ with some elements assigned minus signs.   Let $\mathcal{B}_n$ denote the set of signed permutations on $[n]$.   Given a  signed permutation  $\pi\in \mathcal{B}_{n}$,  an index $i$, $0\leq i< n$,  is called a  {\em descent   of type  $B$} of $\pi$ if $\pi_i>\pi_{i+1}$ with the convention $\pi_0=0$.     Let $\des_B(\pi)$ denote the number of descents of type $B$ of $\pi$. 
	 Define the Eulerian polynomial $B_n(t)$ of type $B$ by
	 \begin{equation}\label{eq-B_n(t)}
	 	B_n(t)=\sum_{\pi\in B_n} t^{\des_B(\pi)}
	 	=\sum_{k=0}^{n} B_{n,k}\, t^k,
	 \end{equation}
	 where $B_{n,k}$ is called  the {\em Eulerian number of type $B$} counting the elements of $B_n$ with $k$  descents of type $B$. Following \cite{AdinBrentiRoichman2001hyperoctahedral},  the {\em flag major index} of $\pi\in \mathcal{B}_n$ is defined as
	 $$
	\fmaj(\pi)=2\maj(\pi)+\neg(\pi).
	 $$
	 Here
	 $\neg(\pi)$ means the number of negative entries of $\pi$ and
	 $$
	 \maj(\pi)=\sum_{\substack{1\leq i<n\\ \pi_i>\pi_{i+1}}} i.
	 $$
	 The $q$-Eulerian polynomial of type $B$ \cite{ChowGessel2007hyperoctahedral} is defined by
	 \begin{equation}\label{eq:qEulerA-def}
	 	B_n(t,q)=\sum_{\sigma\in \mathcal{B}_n} t^{\des_B(\sigma)} q^{\fmaj(\sigma)}
	 	=\sum\limits_{k= 0}^{n}B_{n,k}(q)t^{k}
	 \end{equation}
	 whose  factorial generating function is given by 
	\begin{equation}\label{eq-gen-B}
		\frac{B_n(t,q)}{(t;q^2)_{n+1}}=\sum_{k\ge 0}[2k+1]_q^n\, t^k.
	\end{equation}
	The coefficients $B_{n,k}(q)$ satisfy the recurrence relation 
	\begin{equation}\label{eq-re-B_{n,k}}
		B_{n,k}(q)=[2k+1]_q\, B_{n-1,k}(q)+ q^{2k-1}\,[2n-2k+1]_q\, B_{n-1,k-1}(q),\quad 1\le k\le n.
	\end{equation}
	Using the recurrence relation (\ref{eq-re-B_{n,k}}), Han, Jouhet and Zeng \cite{HanJouhetZeng2013qtriangles} obtained the following expansion for $B_n(t,q)$.
	 \begin{theorem}[\cite{HanJouhetZeng2013qtriangles}, Theorem 5] \label{thm-Han-2}
	 	For $n\geq 1$, there exist polynomials $b_{n,k}(q)\in\mathbb{N}[q]$ such that 
	 	\begin{equation}\label{eq-Han-B}
	 		B_n(t,q)
	 		=\sum_{k=0}^{\lfloor n/2\rfloor} b_{n,k}(q)\, t^k\, \bigl(-tq^{2k+1};q^2\bigr)_{n-2k}.
	 	\end{equation}
	 	Moreover, the coefficients $b_{n,k}(q)$ satisfy  the following recurrence relation:
	 	\begin{equation}\label{eq-re-2}
	 		b_{n,k}(q) = [2k+1]_q\, b_{n-1,k}(q)
	 		+ (1+q)\,q^{2k-1}(1+q^{2k-1})\,[n+1-2k]_{q^2}\, b_{n-1,k-1}(q)
	 	\end{equation}
	 	for $n\ge 2$ and $0\le k\le \lfloor n/2\rfloor$, with initial condition $b_{1,0}(q)=1$, and $b_{n,k}(q)=0$ whenever $k<0$ or $k>\lfloor n/2\rfloor$.
	 \end{theorem}
	 It should be mentioned that setting $q=1$ in Theorem \ref{thm-Han-2} recovers the $\gamma$-positivity of the Eulerian polynomial  $B_n(t)$ \cite{Chow2008expansions, Petersen2007peakalgebras, Stembridge2008coxetercones}.

	   The study of the theme of  $\gamma$-positivity from a purely combinatorial aspect was proposed by Athanasiadis \cite{Athanasiadis2018gamma}. 
	     Ordered labeled trees have long been   powerful   tools for combinatorially interpreting the $\gamma$-coefficients of enumerative polynomials; see, for instance, \cite{ChenFuYan2023gessel, LinXuZhao2022multigamma, YanHuangYang2022partialgamma, YanYangLin2026bigamma}.     
	  The main objective of this paper is 
  to provide combinatorial interpretations for the coeﬃcients $a_{n,k}(q)$ and $b_{n,k}(q)$ via group actions  on   increasing binary trees, thereby resolving an open problem posed by Han, Jouhet and Zeng 
	 \cite[Problem 11]{HanJouhetZeng2013qtriangles}. We further give combinatorial interpretations for the quotients ${a_{n,k}(q)/(-q;q)_{k-1}}$ and 
	 ${b_{n,k}(q)/(1+q)^k(-q;q^2)_k}$ 
	 in terms of Andr\'e trees and a certain class of increasing
	 binary trees of type $B$, respectively. As an application of the latter
	 interpretation, we obtain a combinatorial  interpretation   for   a $q$-analogue of the secant number    and prove the positivity conjecture of Han, Jouhet and
	 Zeng \cite[Conjecture 8]{HanJouhetZeng2013qtriangles}.

	 \section{Main results}
	 
	 In this section, we shall state our main results whose proofs will be presented in the
	 subsequent sections.
	 
	 Let us first review some terminology related to trees. An ordered tree is a tree equipped with a distinguished node called the root, such that the subtrees of every node are linearly ordered. In this paper, all trees are assumed to be ordered.
	Recall that  a  \blue{\em binary tree} is an ordered  tree where each internal node has either a left child, a right child, or both.   An internal node of a binary tree is said to be a {\em singleton internal node} if it has only one child.

	\begin{definition}[Increasing binary trees]
		  An  \blue{  increasing binary  tree} on $[n]$ is a labeled  binary tree   such that
		\begin{itemize} 
			\item  the labels of the nodes form precisely the set $[n]$ and
			\item  the labels along a path from the root to any leaf are  increasing.
		\end{itemize}
		Denote by $\B_n$ the set of all   increasing binary trees on $[n]$.  See Figure~\ref{fig:alpha} for an illustration of increasing binary trees.
	\end{definition}

 Given a binary tree $T\in \B_n$,  let $\Leaf(T)$ and $\Sint(T)$ denote the set of leaves and the set of singleton internal nodes of $T$, respectively. 
 Let $\leaf(T)$ and $\l(T)$ denote the number of leaves and the number of left edges of $T$, respectively. 
 The statistic $\alpha(T)$ is defined recursively as follows:
If $T$ has only one node, set $\alpha(T)=0$. Otherwise, define $\hat{T}$ to be the tree obtained from $T$ by removing  node $n$  together with  its incident edge.  Suppose that $\hat{T}$ has exactly $k$ leaves which are listed in increasing order:
$$
\Leaf(\hat{T})=\{\ell_1<\ell_2<\cdots<\ell_k\}
$$
  and has exactly $m$ singleton internal nodes which are listed in increasing order:
  $$
  \Sint(\hat{T})=\{u_1<u_2<\cdots<u_m\}.
  $$
\begin{itemize}
 
	\item If node $n$ is the child of node $\ell_i$ for some $1\leq i\leq k$, then set $\alpha(T)=\alpha(\hat{T})+i-1$.
	\item If node $n$ is the left  child of node $u_i$ for some $1\leq i\leq m$, then set $\alpha(T)=\alpha(\hat{T})+\leaf(\hat{T})+m-i$.
		\item If node $n$ is the right  child of node $u_i$ for some $1\leq i\leq m$, then set $\alpha(T)=\alpha(\hat{T})+2\leaf(\hat{T})+i-1$.
\end{itemize}

Figure \ref{fig:alpha} illustrates the successive recursive computation 
 of $\alpha(T_7)$, where $\hat{T}_i=T_{i-1}$ for all $1\leq i\leq 7$.

An  internal node of a binary tree  is said to be {\em left (resp.,  right)} if it has only  a left (resp.,  right) child.  
  Given a binary tree $T\in \B_n$,   let $\Sint(T)=\{v_1< v_2<\ldots< v_m\}$.   Define  $$\beta(T)=(\beta_1, \beta_2, \ldots, \beta_m)$$
  where 
  $$
  \beta_i=\left\{
   \begin{array}{ll}
  	1 & \text{if }  v_i\in \Lint(T),\\[2pt]
  0& \text{otherwise}.
  \end{array}
  \right.
  $$
  Here $\Lint(T)$ denotes the set of  left  internal nodes   of $T$. 
  Let $T_5$ be the tree displayed in Figure \ref{fig:alpha}. Then we have
  $$
  \beta(T_5)=(\beta_1, \beta_2, \beta_3)=(1,0,0).
  $$ 
The {\em major index} of an increasing binary tree $T$ is defined as 
	 $$
	 \maj(T)=\alpha(T)+\sum\limits_{i=1}^{m}(\leaf(T)+i-1) \beta_i.
	 $$
	 Continuing with our running example, we have
	 $$
	 \begin{array}{lll}
	 \maj(T_5)&=&\alpha(T_5)+\sum\limits_{i=1}^{3}(\leaf(T_5)+i-1)\beta_i\\
	 	&=& 5+(2+1-1)=7.
	 	\end{array}
	 $$
	Define $$\B^*_{n,k}=\{T\in \B_n\mid  \leaf(T)=k, \,\, \lint(T)=0\},$$
	 where $\lint(T)$ denotes the number of left   internal nodes of $T$.  
	For example, Figure~\ref{fig:three-labeled-trees} shows the two trees $T_1$ and $T_2$ belonging to $\B^*_{3,2}$, as well as the unique tree $T_3$ of $\B^*_{3,1}$.

	\begin{figure}[htbp]
		\centering
		\footnotesize
		\setlength{\tabcolsep}{4pt}
		\renewcommand{\arraystretch}{1.15}
		\setlength{\arrayrulewidth}{0.35pt}
		
		\tikzset{
			v/.style={circle,fill,inner sep=1.05pt}
		}
		
		\begin{tabular}{c@{\hspace{0.4em}}|@{\hspace{0.7em}}c|c|c|c|c@{\hspace{0.4em}}|@{\hspace{0.7em}}c|c|c|c}
			\hline
			$i$ & $T_i$ & $\operatorname{Leaf}(T_i)$ & $\operatorname{Sint}(T_i)$ & $\alpha(T_i)$
			& $i$ & $T_i$ & $\operatorname{Leaf}(T_i)$ & $\operatorname{Sint}(T_i)$ & $\alpha(T_i)$ \\
			\hline
			
			0 &
			\begin{tikzpicture}[baseline=-2pt,scale=.52]
				\node[v,label=above:$1$] at (0,0) {};
			\end{tikzpicture}
			&
			$\{1\}$ & $\varnothing$ & $0$
			&
			4 &
			\begin{tikzpicture}[baseline=-2pt,scale=.52]
				\node[v,label=above:$1$] (1) at (0,0) {};
				\node[v,label=left:$2$] (2) at (-1,-1) {};
				\node[v,label=right:$3$] (3) at (1,-1) {};
				\node[v,label=left:$4$] (4) at (.4,-2) {};
				\node[v,label=right:$5$] (5) at (1,-3) {};
				\draw (1)--(2) (1)--(3) (3)--(4) (4)--(5);
			\end{tikzpicture}
			&
			$\{2,5\}$ & $\{3,4\}$ & $4$
			\\
			\hline
			
			1 &
			\begin{tikzpicture}[baseline=-2pt,scale=.52]
				\node[v,label=above:$1$] (1) at (0,0) {};
				\node[v,label=left:$2$] (2) at (-1,-1) {};
				\draw (1)--(2);
			\end{tikzpicture}
			&
			$\{2\}$ & $\{1\}$ & $0$
			&
			5 &
			\begin{tikzpicture}[baseline=-2pt,scale=.52]
				\node[v,label=above:$1$] (1) at (0,0) {};
				\node[v,label=left:$2$] (2) at (-1,-1) {};
				\node[v,label=right:$3$] (3) at (1,-1) {};
				\node[v,label=left:$4$] (4) at (.4,-2) {};
				\node[v,label=right:$5$] (5) at (1,-3) {};
				\node[v,label=right:$6$] (6) at (1.6,-4) {};
				\draw (1)--(2) (1)--(3) (3)--(4) (4)--(5) (5)--(6);
			\end{tikzpicture}
			&
			$\{2,6\}$ & $\{3,4,5\}$ & $5$
			\\
			\hline
			
			2 &
			\begin{tikzpicture}[baseline=-2pt,scale=.52]
				\node[v,label=above:$1$] (1) at (0,0) {};
				\node[v,label=left:$2$] (2) at (-1,-1) {};
				\node[v,label=right:$3$] (3) at (1,-1) {};
				\draw (1)--(2) (1)--(3);
			\end{tikzpicture}
			&
			$\{2,3\}$ & $\varnothing$ & $2$
			&
			6 &
			\begin{tikzpicture}[baseline=-2pt,scale=.52]
				\node[v,label=above:$1$] (1) at (0,0) {};
				\node[v,label=left:$2$] (2) at (-1,-1) {};
				\node[v,label=right:$3$] (3) at (1,-1) {};
				\node[v,label=left:$4$] (4) at (.4,-2) {};
				\node[v,label=left:$7$] (7) at (-.3,-3) {};
				\node[v,label=right:$5$] (5) at (1,-3) {};
				\node[v,label=right:$6$] (6) at (1.6,-4) {};
				\draw (1)--(2) (1)--(3) (3)--(4) (4)--(7) (4)--(5) (5)--(6);
			\end{tikzpicture}
			&
			$\{2,6,7\}$ & $\{3,5\}$ & $8$
			\\
			\hline
			
			3 &
			\begin{tikzpicture}[baseline=-2pt,scale=.52]
				\node[v,label=above:$1$] (1) at (0,0) {};
				\node[v,label=left:$2$] (2) at (-1,-1) {};
				\node[v,label=right:$3$] (3) at (1,-1) {};
				\node[v,label=left:$4$] (4) at (.4,-2) {};
				\draw (1)--(2) (1)--(3) (3)--(4);
			\end{tikzpicture}
			&
			$\{2,4\}$ & $\{3\}$ & $3$
			&
			7 &
			\begin{tikzpicture}[baseline=-2pt,scale=.52]
				\node[v,label=above:$1$] (1) at (0,0) {};
				\node[v,label=left:$2$] (2) at (-1,-1) {};
				\node[v,label=right:$3$] (3) at (1,-1) {};
				\node[v,label=left:$4$] (4) at (.4,-2) {};
				\node[v,label=right:$8$] (8) at (1.8,-2) {};
				\node[v,label=left:$7$] (7) at (-.3,-3) {};
				\node[v,label=right:$5$] (5) at (1,-3) {};
				\node[v,label=right:$6$] (6) at (1.6,-4) {};
				\draw (1)--(2) (1)--(3) (3)--(4) (3)--(8) (4)--(7) (4)--(5) (5)--(6);
			\end{tikzpicture}
			&
			$\{2,6,7,8\}$ & $\{5\}$ & $14$
			\\
			\hline
			
		\end{tabular}
		
		\caption{The recursive  computation of $\alpha(T_7)$.}\label{fig:alpha}
		\label{fig:alpha-example}
	\end{figure}

	 	\begin{figure}[htbp]
	 	\centering
	 	\begin{tikzpicture}[
	 		vertex/.style={
	 			circle,
	 			draw,
	 			fill=black,
	 			minimum size=2pt,
	 			inner sep=0pt,
	 			line width=0.7pt
	 		},
	 		edge/.style={line width=0.3pt}
	 		]
	 		
	 		\begin{scope}[shift={(0,0)}]
	 			\node[vertex,label=above:{$1$}] (a1) at (0,1.4) {};
	 			\node[vertex,label=below left:{$2$}] (a2) at (-0.8,0) {};
	 			\node[vertex,label=below right:{$3$}] (a3) at (0.8,0) {};
	 			
	 			\draw[edge] (a1) -- (a2);
	 			\draw[edge] (a1) -- (a3);
	 			
	 			\node at (0,-0.85) {$T_1$};
	 		\end{scope}
	 		
	 		\begin{scope}[shift={(4.2,0)}]
	 			\node[vertex,label=above:{$1$}] (b1) at (0,1.4) {};
	 			\node[vertex,label=below left:{$3$}] (b3) at (-0.8,0) {};
	 			\node[vertex,label=below right:{$2$}] (b2) at (0.8,0) {};
	 			
	 			\draw[edge] (b1) -- (b3);
	 			\draw[edge] (b1) -- (b2);
	 			
	 			\node at (0,-0.85) {$T_2$};
	 		\end{scope}
	 		
	 		\begin{scope}[shift={(8.4,0)}]
	 			\node[vertex,label=above:{$1$}] (c1) at (-0.8,1.6) {};
	 			\node[vertex,label=above right:{$2$}] (c2) at (0,0.8) {};
	 			\node[vertex,label=below right:{$3$}] (c3) at (0.8,0) {};
	 			
	 			\draw[edge] (c1) -- (c2) -- (c3);
	 			
	 			\node at (0,-0.85) {$T_3$};
	 		\end{scope}
	 		
	 	\end{tikzpicture}
	 	\caption{Three increasing binary trees.}
	 	\label{fig:three-labeled-trees}
	 \end{figure}

Now we are in the position to state our first main result, which provides   combinatorial
interpretations for  $A_n(t,q)$ and $a_{n,k}(q)$.
	 \begin{theorem}\label{thm-gamma}
	 	For $n\geq 1$, 
	 	we have 
	 	\begin{equation}\label{eq-gamma-1}
	 		A_n(t,q)=\sum\limits_{T\in \B_n}t^{\l(T)}q^{\maj(T)}=\sum\limits_{k=1}^{\lfloor {(n+1)/  2}\rfloor}  
	 		a_{n, k}(q)t^{k-1}(-tq^k; q)_{n+1-2k}, 
	 		\end{equation}
	 		where 
	 		$$
	 		a_{n,k}(q)=\sum_{T\in \B^*_{n,k}}q^{\alpha(T)}.
	 		$$
	 	\end{theorem}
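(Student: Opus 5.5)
The statement has two equalities. I would prove the first one, $A_n(t,q)=\sum_{T\in\B_n}t^{\l(T)}q^{\maj(T)}$, through the recurrence (\ref{eq:qEulerA-rec}). I would prove the second by grouping trees into orbits of a Foata--Strehl type action whose orbit representatives are the trees in $\B^*_{n,k}$.

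For the first equality, set $A^{T}_{n,k}(q)=\sum_{T\in\B_n,\ \l(T)=k-1}q^{\maj(T)}$. I would show it satisfies (\ref{eq:qEulerA-rec}) by removing node $n$, i.e.\ comparing $T$ with $\hat T$.

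Let $\hat T$ have $\ell$ leaves and $m$ singleton internal nodes. Then $\hat T$ has $n-1$ nodes, so $2\ell+m=n$ (the number of empty child slots is $n$). The left-edge count satisfies $\l(\hat T)=\ell-1+\lint(\hat T)$. Inserting node $n$ falls into three cases.
\begin{itemize}
\item As a child of a leaf: $\l$ may or may not increase.
\item As a left child of a right singleton node: $\l$ increases by one.
\item As a right child of a left singleton node: $\l$ stays the same.
\end{itemize}
The weights $i-1$, $\ell+m-i$ and $2\ell+i-1$ in $\alpha$, together with the $\beta$-correction in $\maj$, are designed so that the following holds. If $\l(\hat T)=k-1$, the insertions that keep $\l$ fixed contribute total $q$-weight $q^{\maj(\hat T)}[k]_q$. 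If $\l(\hat T)=k-2$, the insertions that raise $\l$ contribute $q^{\maj(\hat T)}q^{k-1}[n+1-k]_q$.

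The care needed is that $\maj$ depends on $\beta$, and the $\beta$-term changes when a singleton node becomes a full node or a leaf becomes a singleton. So the step is to track how $\sum_i(\ell+i-1)\beta_i$ shifts in each case. I expect this bookkeeping, which plays the role of a Carlitz-type insertion, to be the main technical verification. Once the recurrence holds and the initial values agree ($n=1$ gives $1$), the first equality follows.

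For the $\gamma$-expansion, I would define for each singleton internal node $v$ an involution $\varphi_v$. It flips the single child subtree of $v$ from left to right or back, keeping labels and all other structure.

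First I would check that the $\varphi_v$ commute, so they generate a $\mathbb{Z}_2^{m}$-action. Next I would check that $\varphi_v$ preserves $\alpha$, because the leaf set and the singleton set are unchanged along the insertion history of $T$. In contrast, $\varphi_v$ changes $\l$ by $\pm1$ and changes $\beta$ in one coordinate.

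Each orbit contains a unique tree with $\lint=0$, namely a tree in $\B^*_{n,\ell}$ with $\ell$ leaves. Such a tree has $\l=\ell-1$ and $m=n+1-2\ell$ singleton nodes. Turning the singleton $v_i$ into a left node adds $1$ to $\l$ and $\ell+i-1$ to $\maj$. Hence the orbit sums to
$$q^{\alpha(T)}t^{\ell-1}\prod_{i=1}^{m}\bigl(1+tq^{\ell+i-1}\bigr)=q^{\alpha(T)}t^{\ell-1}(-tq^{\ell};q)_{n+1-2\ell}.$$
Summing over $T\in\B^*_{n,\ell}$ gives the expansion with $a_{n,\ell}(q)=\sum_{T\in\B^*_{n,\ell}}q^{\alpha(T)}$.

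The delicate point is the $\alpha$-invariance. Flipping a subtree could reorder the leaves or singleton nodes at intermediate stages of the recursion. However, the recursion for $\alpha$ depends only on which leaves and singleton nodes exist and on the left/right position of the inserted node $n$ relative to its parent. Flipping the child of $v$ changes only whether that child is left or right, and $v$ is a singleton node when the child is added. So the rule for $\alpha$ must be checked to give the same contribution for left and right insertion under a singleton node.

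As written, the two singleton cases contribute $\ell+m-i$ and $2\ell+i-1$, which are not equal. So $\alpha$ is not literally invariant under naive flipping. I would therefore expect the real action to be a generalized Foata--Strehl action, which permutes the singleton positions as well as flipping. The $\beta$-term in $\maj$ exists precisely to absorb this discrepancy. Constructing this modified action so that $\alpha$ is preserved is the step I expect to be the main obstacle. My first attempt would be to define it recursively on $n$, mirroring the recursive definition of $\alpha$.
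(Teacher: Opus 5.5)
There are two genuine gaps, one in each half.

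\textbf{First equality.} Plain deletion of node $n$ does not give the per-$\hat T$ identities you assert, and the bookkeeping you defer would expose this rather than confirm it. Hanging $n$ below a leaf $\ell_i$ makes that leaf a new singleton. Its $\beta$-entry is inserted at the position $p$ of $\ell_i$ among the singletons ordered by label, which shifts every later entry of $\beta(\hat T)$. So a right-child insertion raises $\maj$ by $i-1+\#\{j\ge p:\beta_j=1\}$, not by $i-1$.

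Concretely, let $\hat T\in\B_4$ have root $1$ with left child $2$ and right child $3$, and let $4$ be the left child of $3$. Then $\l(\hat T)=2$ and $\maj(\hat T)=\alpha(\hat T)+2$. The three $\l$-preserving insertions of $5$ (right child of $2$, right child of $4$, right child of $3$) raise $\maj$ by $1,1,2$. This gives $q^{\maj(\hat T)}(2q+q^2)\neq q^{\maj(\hat T)}[3]_q$.

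The missing idea is the paper's Carlitz-type insertion $\psi$. After attaching $n$ to a leaf, reset the child types of all singleton nodes so that the new $\beta$ is the old one with $0$ (right edge) or $1$ (left edge) \emph{appended at the end}. This leaves $\leaf$ and $\lint$ (hence $\l$) unchanged, and, by flip-invariance, $\alpha$ as well. Write $\ell=\leaf(\hat T)$ and $s=\lint(\hat T)$. The four insertion types then raise $\maj$ as follows:
\begin{itemize}
\item right child of a leaf (after the reset): exactly $0,\dots,\ell-1$;
\item left child of a leaf (after the reset): exactly $n-\ell,\dots,n-1$;
\item direct attachment to the $r$-th smallest left singleton: $\ell+r-1$;
\item direct attachment to the $r$-th largest right singleton: $\ell+s+r-1$.
\end{itemize}
The inverse is ``delete $n$ and drop the last $\beta$-entry'', not plain deletion. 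With this corrected fibre decomposition, and a proof that it is a bijection, your recurrence route is equivalent to the paper's bijection $\Phi$.

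\textbf{Second equality.} Here you talk yourself out of the correct argument. The naive flip does preserve $\alpha$: this is exactly Lemma~\ref{lem-alpha}, and the paper's GFS-action is precisely your $\varphi_v$. Your objection assumes $v$ is a singleton when its child $y$ is added. At that stage $v$ is a leaf of the tree on $[y-1]$, and the leaf rule contributes $i-1$ whether $y$ is a left or right child. The two singleton rules ($\leaf+m-i$ and $2\leaf+i-1$) fire only when a node receives its \emph{second} child and becomes full, and full nodes are never flipped. At every other stage the flip commutes with deletion, preserves the leaf and singleton sets, and leaves the parent and side of the new node unchanged. So induction on $n$ gives $\alpha(\varphi_v(T))=\alpha(T)$, and no modified action is needed. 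With that, your orbit sum $q^{\alpha(T)}t^{\ell-1}\prod_{i=1}^{m}(1+tq^{\ell+i-1})$ completes the expansion exactly as in the paper. As written, though, the action is left unconstructed, and this same invariance is what the repair of the first half relies on.
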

	 	
	 	 \begin{example}
	 		Take $n=3$ in Theorem~\ref{thm-gamma}. The expansion of $A_3(t,q)$ is given by
	 		\[
	 		A_3(t,q)=t(q+q^2)+(1+tq)(1+tq^2)=t\big(q^{\alpha(T_2)}+q^{\alpha(T_1)}\big)+ q^{\alpha(T_3)}(1+tq)(1+tq^2),
	 		\]
	 		where $T_1$, $T_2$, and $T_3$ are shown in Figure~\ref{fig:three-labeled-trees} with $\alpha(T_1)=2$, $\alpha(T_2)=1$, and $\alpha(T_3)=0$.
	 	\end{example}
	 		\begin{definition}[Andr\'e  trees]
	 		A binary tree $T\in\B_n$ is called an \blue{ Andr\'e  tree} on $[n]$ if the smallest
	 		child of any internal node is its right child. 
	 		Denote by $\AT_n$ the set of all   Andr\'e   trees on $[n]$. 
	 	\end{definition} 		
	 	Andr\'e trees are in natural bijection with Andr\'e permutations \cite{Lin-Wen-Yan}, the latter of which were introduced by Foata and Sch\"utzenberger \cite{FS70} and further studied by Strehl \cite{Str74} and Foata and Strehl \cite{FS74,FS76}.
	 	
	 	Define  $\AT_{n,k}$ to be the set of trees  $T\in \AT_n$   with $\leaf(T)$=k.  For example, Figure~\ref{fig:three-labeled-trees-Andre} shows the four   trees   belonging to $\mathcal{AT}_{4,2}$.
	 		\begin{figure}[htbp]
	 		\centering
	 		
	 		\begin{tikzpicture}[
	 			vertex/.style={
	 				circle,
	 				draw=black,
	 				fill=black,
	 				inner sep=1pt,
	 				minimum size=3pt
	 			},
	 			edge/.style={
	 				line width=0.6pt
	 			}
	 			]

	 			\begin{scope}[xshift=0cm]
	 				
	 				\node[vertex] (t1r) at (0,1.2){};
	 				\node[vertex] (t13) at (-0.7,0.3){};
	 				\node[vertex] (t12) at (0.7,0.3){};
	 				\node[vertex] (t14) at (1.3,-0.6){};
	 				
	 				\draw[edge](t1r)--(t13);
	 				\draw[edge](t1r)--(t12);
	 				\draw[edge](t12)--(t14);
	 				
	 				\node[above] at (t1r){$1$};
	 				\node[left] at (t13){$3$};
	 				\node[right] at (t12){$2$};
	 				\node[right] at (t14){$4$};
	 				
	 				\node at (0,-1.5){$T_1$};
	 				
	 			\end{scope}

	 			\begin{scope}[xshift=3.2cm]
	 				
	 				\node[vertex] (t2r) at (0,1.2) {};
	 				\node[vertex] (t23) at (-0.7,0.3) {};
	 				\node[vertex] (t22) at (0.7,0.3) {};
	 				\node[vertex] (t24) at (0,-0.6) {};
	 				
	 				\draw[edge] (t2r)--(t23);
	 				\draw[edge] (t2r)--(t22);
	 				\draw[edge] (t23)--(t24);
	 				
	 				\node[above=1pt] at (t2r) {$1$};
	 				\node[left=2pt] at (t23) {$3$};
	 				\node[right=2pt] at (t22) {$2$};
	 				\node[below=2pt] at (t24) {$4$};
	 				
	 				\node at (0,-1.5) {$T_2$};
	 				
	 			\end{scope}

	 			\begin{scope}[xshift=6.4cm]
	 				
	 				\node[vertex] (t3r) at (0,1.2){};
	 				\node[vertex] (t34) at (-0.7,0.3){};
	 				\node[vertex] (t32) at (0.7,0.3){};
	 				\node[vertex] (t33) at (1.3,-0.6){};
	 				
	 				\draw[edge](t3r)--(t34);
	 				\draw[edge](t3r)--(t32);
	 				\draw[edge](t32)--(t33);
	 				
	 				\node[above] at (t3r){$1$};
	 				\node[left] at (t34){$4$};
	 				\node[right] at (t32){$2$};
	 				\node[right] at (t33){$3$};
	 				
	 				\node at (0,-1.5){$T_3$};
	 				
	 			\end{scope}

	 			\begin{scope}[xshift=9.6cm]
	 				
	 				\node[vertex] (t4r) at (-0.3,1.2){};
	 				\node[vertex] (t42) at (0.35,0.45){};
	 				\node[vertex] (t44) at (-0.35,-0.25){};
	 				\node[vertex] (t43) at (1.05,-0.25){};
	 				
	 				\draw[edge](t4r)--(t42);
	 				\draw[edge](t42)--(t44);
	 				\draw[edge](t42)--(t43);
	 				
	 				\node[above] at (t4r){$1$};
	 				\node[right] at (t42){$2$};
	 				\node[left] at (t44){$4$};
	 				\node[right] at (t43){$3$};
	 				
	 				\node at (0.35,-1.5){$T_4$};
	 				
	 			\end{scope}

	 		\end{tikzpicture}
	 		\caption{Four Andr\'e trees  in $\AT_{4,2}$.}
	 		\label{fig:three-labeled-trees-Andre}
	 	\end{figure}

	 	Our second  result is  concerned with the $q$-analogue of (\ref{eq-d_{n,k}}), which provides a combinatorial interpretation of the quotient $ {a_{n,k}(q)/(-q; q)_{k-1}}$ in terms of  enumerative polynomials on Andr\'e trees.
	 	\begin{theorem}\label{thm-d_{n,k}(q)}
	 		For    $n,k\geq 1$, we have
	 		$$
	 		a_{n,k}(q)=(-q; q)_{k-1}d_{n,k}(q) ,
	 		$$
	 		where 
	 		$$
	 		d_{n,k}(q)= \sum_{T\in \AT_{n,k}}q^{\alpha(T)}.
	 		$$
	 		 
	 		\end{theorem}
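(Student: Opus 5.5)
Since Theorem~\ref{thm-gamma} gives $a_{n,k}(q)=\sum_{T\in\B^*_{n,k}}q^{\alpha(T)}$, and the recurrence (\ref{eq-a_{n,k}(q)}) determines $a_{n,k}(q)$, the plan is recursive: show that $\tilde d_{n,k}(q):=a_{n,k}(q)/(-q;q)_{k-1}$ satisfies
$$
\tilde d_{n,k}(q)=[k]_q\,\tilde d_{n-1,k}(q)+q^{k-1}[n+2-2k]_q\,\tilde d_{n-1,k-1}(q),
$$
and that $d_{n,k}(q)=\sum_{T\in\AT_{n,k}}q^{\alpha(T)}$ satisfies the same recurrence with the same initial condition $d_{1,1}(q)=1$. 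The first part is purely algebraic. Substitute $a_{n,k}=(-q;q)_{k-1}\tilde d_{n,k}$ into (\ref{eq-a_{n,k}(q)}) and use $(-q;q)_{k-1}=(1+q^{k-1})(-q;q)_{k-2}$. This cancels the factor $(1+q^{k-1})$ in the second term, and the recurrence above follows.

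The main work is the combinatorial recurrence for Andr\'e trees. Take $T\in\AT_{n,k}$ and delete node $n$ to get $\hat T$. Node $n$ is a leaf, so removing it keeps the Andr\'e property in almost all cases. If $n$ was the only child of its parent, then in an Andr\'e tree that child is a right child. So $n$ hangs as the right child of a leaf $\ell_i$ of $\hat T$, $\leaf(T)=\leaf(\hat T)$, and $\alpha$ increases by $i-1$. Summing over $i$ gives $[k]_q\,d_{n-1,k}(q)$.

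If $n$ is one of two children of its parent $u$, then $u$ is a singleton internal node of $\hat T$, and $n$, being the largest label, must be the left child. So $u$ must be a right internal node of $\hat T$. Here $\leaf(T)=\leaf(\hat T)+1$, and by the definition of $\alpha$ the contribution is $q^{\leaf(\hat T)+m-i}=q^{(k-1)+m-i}$, where $m=|\Sint(\hat T)|$. In an Andr\'e tree every singleton node is right, and counting edges gives $n-1=2(\text{number of 2-child nodes})+m$ with $\text{number of 2-child nodes}=k-1-1$ for $\hat T$ having $k-1$ leaves. Hence $m=n-2(k-1)-1+\ldots$; I would check carefully that this yields $m=n+1-2k+1=n+2-2k$. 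Then $\sum_{i=1}^m q^{k-1+m-i}=q^{k-1}[n+2-2k]_q$, which matches. Conversely, every insertion described above produces an Andr\'e tree in $\AT_{n,k}$, so deletion is a bijection onto the stated choices.

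The main obstacle is this bookkeeping: checking that the $\alpha$-increment rules (leaf case $i-1$, left-child-of-singleton case $\leaf+m-i$) apply exactly, and that the singleton count $m$ equals $n+2-2k$. The identity $m=n+1-2\leaf+\ldots$ comes from $n=(\text{leaves})+(\text{singletons})+(\text{binary nodes})$ together with $\text{binary nodes}=\text{leaves}-1$. For $\hat T$ on $n-1$ nodes with $k-1$ leaves this gives $m=n-1-2(k-1)+1=n+2-2k$, as needed. With both recurrences and initial values matching, induction on $n$ gives $a_{n,k}(q)=(-q;q)_{k-1}d_{n,k}(q)$.
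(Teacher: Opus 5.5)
Your proposal is correct and is essentially the paper's own proof. You delete node $n$ and use the Andr\'e condition to split into ``right child of a leaf of $\hat T$'' (contributing $[k]_q\,d_{n-1,k}(q)$) and ``left child of a right singleton node of $\hat T$'' (contributing $q^{k-1}[n+2-2k]_q\,d_{n-1,k-1}(q)$, since $m=n+2-2k$). You then match with the recurrence for $a_{n,k}(q)$ via $(-q;q)_{k-1}=(1+q^{k-1})(-q;q)_{k-2}$; dividing $a_{n,k}(q)$ rather than multiplying $d_{n,k}(q)$, as the paper does, makes no difference.

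In a write-up, drop your first edge count: since $\hat T$ has $n-1$ nodes, it should read $n-2=2(k-2)+m$, and your later node count is the correct one. Also state explicitly, as the paper does, that $\AT_{n,k}$ is empty for $k>\lfloor (n+1)/2\rfloor$, so that the boundary values match those of $a_{n,k}(q)$.
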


	 	 	Let $T\in \B_n$ be a binary tree.  A \blue{\em right chain}  in $T$ is a maximal path consisting of only right  edges.  The right chain containing the root of $T$ is called the {\em right arm} of $T$.  Take the tree $T^{(7)}$ displayed in Figure~\ref{fig:alpha} as an example. Then the right arm of $T^{(7)}$ is given by $1-3-8$.
	 	 	
	 	 		\begin{definition}[Increasing binary trees of type $B$]
	 	
	 	 		An \blue{increasing binary tree of type $B$} on $[n]$ is a labeled binary tree with $n+1$ nodes satisfying the following conditions:
	 	 		\begin{itemize}
	 	 			\item  The node labeled by $\infty$ is a leaf  and is the rightmost node on the right arm;
	 	 			\item The absolute values of the labels of all other nodes form precisely the set  $[n]$;
	 	 			\item The absolute values of the labels are strictly increasing along every path from the root to a leaf.
	 	 		\end{itemize}

	 	 		Denote by $\widetilde{\B}_n$ the set of all   increasing binary trees  of
	 	 		 type $B$ on $[n]$. Figure~\ref{fig:tree-B} depicts an increasing binary tree of type $B$ on $[9]$.
	 	 	\end{definition}
	 	
	 	\begin{figure}[htbp]
	 		\centering
	 		\begin{tikzpicture}[
	 				vertex/.style={
	 				circle,
	 				draw=black,
	 				fill=black,
	 				inner sep=1pt,
	 				minimum size=3pt
	 			},
	 			edge/.style={
	 				line width=0.6pt
	 			}
	 			]
	 			\node[vertex,label=above:{$-1$}] (v1) at (0,4.8) {};
	 			\node[vertex,label=below left:{$2$}] (v2) at (-1.8,3.5) {};
	 			\node[vertex,label=above right:{$-3$}] (v3) at (1.5,3.5) {};
	 			
	 			\node[vertex,label=left:{$-4$}] (v4) at (-0.1,2.15) {};
	 			\node[vertex,label=above right:{$-8$}] (v8) at (3.15,2.15) {};
	 			
	 			\node[vertex,label=below left:{$-7$}] (v7) at (-1.5,0.75) {};
	 			\node[vertex,label=left:{$5$}] (v5) at (1.0,0.75) {};
	 			\node[vertex,label=below left:{$9$}] (v9) at (2.25,0.75) {};
	 			\node[vertex,label=below right:{$\infty$}] (vinf) at (4.35,0.75) {};
	 			
	 			\node[vertex,label=below right:{$6$}] (v6) at (2.1,-0.65) {};
	 			
	 			\draw[edge]
	 			(v1) -- (v2)
	 			(v1) -- (v3)
	 			(v3) -- (v4)
	 			(v3) -- (v8)
	 			(v4) -- (v7)
	 			(v4) -- (v5)
	 			(v5) -- (v6)
	 			(v8) -- (v9)
	 			(v8) -- (vinf);
	 		\end{tikzpicture}
	 		\caption{ A tree  $T\in \widetilde{\B}_9$. }\label{fig:tree-B}
	 	\end{figure}

	 	Given a  tree $T\in \widetilde{\B}_n$, the statistic $\alpha_B(T)$ is defined recursively as follows:
	 	If $T$ has only one node, set $\alpha_B(T)=0$.  Otherwise,  construct a tree $T'\in  \widetilde{\B}_{n-1}$ by the following procedure.
	 	\begin{itemize}
	 		\item If node $n$ (resp., $-n$) is a leaf with parent $u$, define $T'$ as the tree obtained from $T$ by deleting node $n$ (resp., $-n$) together with its incident edge.  If $u$ is a singleton internal node of $T$, further relabel  $u$ as $|u|$ (resp., $-|u|$).
	 		
	 		\item If  node $\pm n$ is the parent of  node $\infty$, then  let $T'$ be the tree obtained from $T$ by removing  node  $\infty$ together with its incident edge,  and relabeling  node  $\pm n$ by $\infty$.
	 	\end{itemize}

	 Suppose that $T'$ has exactly $k+1$ leaves  which are listed in increasing order: $$\Leaf(T')=\{\ell_1<\ell_2< \ldots <\ell_k<\infty\}, $$ with the convention that
	 $$
	 -n<\cdots <-1<1<\cdots<n<\infty, 
	 $$ and has exactly $m$ singleton internal nodes which are listed in increasing order of their absolute values: $$\Sint(T')=\{u_1,u_2,\ldots,u_m\}, $$ with $ |u_1|<|u_2|<\cdots<|u_m|.$
	 
	 Define $\alpha_B(T)$ as follows:
	 	\begin{itemize}
	 		
	 		\item  If  the node  $\pm n$ is the left  child  of the  node  with absolute value $|\ell_i|$ for some $1\leq i\leq k$,  then set $\alpha_B(T)=\alpha_B(T')+2i-2$.
	 		
	 		\item If  the node $\pm n$ is the right  child  of the  node  with absolute value $|\ell_i|$ for some $1\leq i\leq k$,   then set $\alpha_B(T)=\alpha_B(T')+2i-1$.
	 		
	 		\item If  node  $\pm n$ is the parent of  node  $\infty$, then set $\alpha_B(T)=\alpha_B(T')+ 2\leaf(T')-2$.
	 	    
	 	    \item If   node $n$ is  a child of  node $u_i$ for some $1\leq i\leq m$ and   $u_i$ is positive, then set $\alpha_B(T)=\alpha_B(T')+2\leaf(T')-1+2(m-i)$.
	 		
	 		\item If node  $-n$ is  a child of  node   $u_i$ for some $1\leq i\leq m$ and   $u_i$ is positive, then set $\alpha_B(T)=\alpha_B(T')+ 2\leaf(T')+2(m-i)$.
	 		
	 		\item If   node $n$ is  a child of   node $u_i$ for some $1\leq i\leq m$ and  $u_i$ is negative, then set $\alpha_B(T)=\alpha_B(T')+2(2\leaf(T')-1)+2(i-1)$.
	 	    
	 	    \item If    node $-n$ is  a child of  node  $u_i$ for some $1\leq i\leq m$ and $u_i$ is negative,  then set $\alpha_B(T)=\alpha_B(T')+2(2\leaf(T')-1)+2(i-1)+1$.
	 		
	 	\end{itemize}
	 See Figure \ref{fig:alpha_B} for an illustration of the successive recursive computation for $\alpha_B(T^{(9)})$, where $(T^{(i)})'=T^{(i-1)}$ for all $1\leq i\leq 9$.

	 Given a tree $T\in \widetilde{\B}_n$, let $\Sint(T)=\{v_1, v_2, \ldots, v_{m}\}$ with $|v_1|<|v_2|<\cdots<|v_m|$. 
	  Define  $\epsilon(T)=(\epsilon_1, \epsilon_2, \ldots, \epsilon_m)$ where 
	 	$$
	 	\epsilon_i=\left\{
	 	\begin{array}{ll}
	 		1 & \text{if  $v_i$ is negative} \\[2pt]
	 		0& \text{otherwise}.
	 	\end{array}
	 	\right.
	 	$$
	 	The {\em flag major index} of an increasing binary tree $T$ of type $B$  is defined as 
	 	$$
	 	\fmaj(T)=\alpha_B(T)+\sum\limits_{i=1}^{m}(2\leaf(T)+2i-3) \epsilon_i.
	 	$$
	 	For example, let $T^{(6)}$ be the tree  shown in Figure \ref{fig:alpha_B}. Then we have $\leaf(T^{(6)})=2$, 
	 	$\Sint(T^{(6)})=\{v_1, v_2, v_3, v_4\}=\{-2, 3,4,-5\}$ and 
	 	$\epsilon(T^{(6)})=(1,0,0,1)$.  Thus, the flag major index of $T^{(6)}$ is given by
	 	$$
	 	\begin{array}{lll}
	 	\fmaj(T^{(6)})&=&\alpha_B(T^{(6)})+\sum\limits_{i=1}^{4}(2\leaf(T^{(6)})+2i-3) \epsilon_i\\
	 	&=& 5+ 3+9\\
	 	&=& 17.
	 	\end{array}
	 	$$
	 	Let  $\nsint(T)$ denote the number of negative   singleton internal nodes  of $T$. 
	 	The {\em weight} of an increasing binary tree $T$ of type $B$  is defined as 
	 	$$
	 	\omega(T)=\leaf(T)-1+\nsint(T).
	 	$$
	 	Define 
	 	$$\widetilde{\B}^*_{n,k}=\{T\in\widetilde{\B}_n\mid  \leaf(T)=k+1, \,\, \nsint(T)=0\}.$$
	 		For example, Figure~\ref{fig:three-labeled-trees-B} shows the  four trees $T_1$, $T_2$, $T_3$ and $T_4$ belonging to $\widetilde{\B}^*_{2,1}$, as well as the unique tree $T_5$ of $\widetilde{\B}^*_{2,0}$.

	 	 \tikzset{
	 		typeB edge/.style={line width=0.45pt},
	 		typeB vertex/.style={circle,fill=black,inner sep=1.1pt}
	 	}
	 	
	 	\newcommand{\TypeBTreePic}[1]{%
	 		\begin{tikzpicture}[x=0.48cm,y=0.44cm,
	 			baseline=(current bounding box.center),
	 			every label/.style={inner sep=0.3pt}]
	 			\path[use as bounding box] (-2.55,-4.30) rectangle (2.10,0.90);
	 			\ifnum#1=0
	 			\node[typeB vertex,label={[font=\scriptsize]above:$\infty$}]
	 			(inf) at (0,0) {};
	 			\else
	 			\node[typeB vertex,label={[font=\scriptsize]above:$-1$}]
	 			(n1) at (0,0) {};
	 			\node[typeB vertex,label={[font=\scriptsize]right:$\infty$}]
	 			(inf) at (1.20,-0.65) {};
	 			\draw[typeB edge] (n1)--(inf);
	 			\ifnum#1>1
	 			\node[typeB vertex,label={[font=\scriptsize]left:$-2$}]
	 			(n2) at (-0.85,-0.65) {};
	 			\draw[typeB edge] (n1)--(n2);
	 			\fi
	 			\ifnum#1>2
	 			\ifnum#1=3
	 			\node[typeB vertex,label={[font=\scriptsize]left:$-3$}]
	 			(n3) at (-1.20,-1.35) {};
	 			\else
	 			\node[typeB vertex,label={[font=\scriptsize]left:$3$}]
	 			(n3) at (-1.20,-1.35) {};
	 			\fi
	 			\draw[typeB edge] (n2)--(n3);
	 			\fi
	 			\ifnum#1>3
	 			\ifnum#1=4
	 			\node[typeB vertex,label={[font=\scriptsize]right:$-4$}]
	 			(n4) at (-0.80,-2.05) {};
	 			\else
	 			\node[typeB vertex,label={[font=\scriptsize]right:$4$}]
	 			(n4) at (-0.80,-2.05) {};
	 			\fi
	 			\draw[typeB edge] (n3)--(n4);
	 			\fi
	 			\ifnum#1>4
	 			\node[typeB vertex,label={[font=\scriptsize]left:$-5$}]
	 			(n5) at (-1.20,-2.75) {};
	 			\draw[typeB edge] (n4)--(n5);
	 			\fi
	 			\ifnum#1>5
	 			\node[typeB vertex,label={[font=\scriptsize]right:$-6$}]
	 			(n6) at (-0.80,-3.45) {};
	 			\draw[typeB edge] (n5)--(n6);
	 			\fi
	 			\ifnum#1>6
	 			\node[typeB vertex,label={[font=\scriptsize]right:$7$}]
	 			(n7) at (-0.30,-1.35) {};
	 			\draw[typeB edge] (n2)--(n7);
	 			\fi
	 			\ifnum#1>7
	 			\node[typeB vertex,label={[font=\scriptsize]left:$-8$}]
	 			(n8) at (-1.65,-2.05) {};
	 			\draw[typeB edge] (n3)--(n8);
	 			\fi
	 			\ifnum#1>8
	 			\node[typeB vertex,label={[font=\scriptsize]right:$9$}]
	 			(n9) at (-0.40,-2.75) {};
	 			\draw[typeB edge] (n4)--(n9);
	 			\fi
	 			\fi
	 		\end{tikzpicture}%
	 	}
	 	
	 	\begin{figure}[htbp]
	 		\centering
	 		\begingroup
	 		\setlength{\tabcolsep}{2.2pt}
	 		\renewcommand{\arraystretch}{1.02}
	 		\footnotesize
	 		\resizebox{\linewidth}{!}{%
	 			\begin{tabular}{c|c|c|c|c||c|c|c|c|c}
	 				\hline
	 				$i$ & $T^{(i)}$ & $\Leaf(T^{(i)})$ & $\Sint(T^{(i)})$
	 				& $\alpha_B(T^{(i)})$
	 				& $i$ & $T^{(i)}$ & $\Leaf(T^{(i)})$ & $\Sint(T^{(i)})$
	 				& $\alpha_B(T^{(i)})$\\
	 				\hline
	 				0 & \TypeBTreePic{0} & $\{\infty\}$ & $\varnothing$ & 0
	 				& 5 & \TypeBTreePic{5} & $\{-5,\infty\}$
	 				& $\{-2,3,4\}$ & 4\\
	 				\hline
	 				1 & \TypeBTreePic{1} & $\{\infty\}$ & $\{-1\}$ & 0
	 				& 6 & \TypeBTreePic{6} & $\{-6,\infty\}$
	 				& $\{-2,3,4,-5\}$ & 5\\
	 				\hline
	 				2 & \TypeBTreePic{2} & $\{-2,\infty\}$ & $\varnothing$ & 3
	 				& 7 & \TypeBTreePic{7} & $\{-6,7,\infty\}$
	 				& $\{3,4,-5\}$ & 11\\
	 				\hline
	 				3 & \TypeBTreePic{3} & $\{-3,\infty\}$ & $\{-2\}$ & 3
	 				& 8 & \TypeBTreePic{8} & $\{-8,-6,7,\infty\}$
	 				& $\{4,-5\}$ & 21\\
	 				\hline
	 				4 & \TypeBTreePic{4} & $\{-4,\infty\}$ & $\{-2,3\}$ & 4
	 				& 9 & \TypeBTreePic{9} & $\{-8,-6,7,9,\infty\}$
	 				& $\{-5\}$ & 30\\
	 				\hline
	 			\end{tabular}%
	 		}
	 		\endgroup
	 		\caption{The recursive computation of $\alpha_B(T^{(9)})$. }\label{fig:alpha_B}
	 	\end{figure}

	 		\begin{figure}[htbp]
	 		\centering
	 		\begin{tikzpicture}[
	 				vertex/.style={
	 				circle,
	 				draw=black,
	 				fill=black,
	 				inner sep=1pt,
	 				minimum size=3pt
	 			},
	 			edge/.style={
	 				line width=0.6pt
	 			}
	 			]
	 			edge/.style={line width=0.7pt},
	 			every label/.style={
	 				font=\small,
	 				inner sep=1pt,
	 				label distance=-0.5pt
	 			}
	 			]
	 			
	 			\begin{scope}[xshift=0cm]
	 				\node[vertex,label=above:{$1$}] (r1) at (0,1.5) {};
	 				\node[vertex,label=below left:{$2$}] (a1) at (-0.75,0) {};
	 				\node[vertex,label=below right:{$\infty$}] (b1) at (0.75,0) {};
	 				
	 				\draw[edge] (r1)--(a1) (r1)--(b1);
	 				\node at (0,-0.65) {$T_1$};
	 			\end{scope}
	 			
	 			\begin{scope}[xshift=3cm]
	 				\node[vertex,label=above:{$1$}] (r2) at (0,1.5) {};
	 				\node[vertex,label=below left:{$-2$}] (a2) at (-0.75,0) {};
	 				\node[vertex,label=below right:{$\infty$}] (b2) at (0.75,0) {};
	 				
	 				\draw[edge] (r2)--(a2) (r2)--(b2);
	 				\node at (0,-0.65) {$T_2$};
	 			\end{scope}
	 			
	 			\begin{scope}[xshift=6cm]
	 				\node[vertex,label=above:{$-1$}] (r3) at (0,1.5) {};
	 				\node[vertex,label=below left:{$2$}] (a3) at (-0.75,0) {};
	 				\node[vertex,label=below right:{$\infty$}] (b3) at (0.75,0) {};
	 				
	 				\draw[edge] (r3)--(a3) (r3)--(b3);
	 				\node at (0,-0.65) {$T_3$};
	 			\end{scope}
	 			
	 			\begin{scope}[xshift=9cm]
	 				\node[vertex,label=above:{$-1$}] (r4) at (0,1.5) {};
	 				\node[vertex,label=below left:{$-2$}] (a4) at (-0.75,0) {};
	 				\node[vertex,label=below right:{$\infty$}] (b4) at (0.75,0) {};
	 				
	 				\draw[edge] (r4)--(a4) (r4)--(b4);
	 				\node at (0,-0.65) {$T_4$};
	 			\end{scope}
	 			
	 			\begin{scope}[xshift=12cm]
	 				\node[vertex,label=above:{$1$}] (r5) at (-0.45,1.5) {};
	 				\node[vertex,label=above right:{$2$}] (a5) at (0.25,0.75) {};
	 				\node[vertex,label=below right:{$\infty$}] (b5) at (0.95,0) {};
	 				
	 				\draw[edge] (r5)--(a5)--(b5);
	 				\node at (0.25,-0.65) {$T_5$};
	 			\end{scope}
	 			
	 		\end{tikzpicture}
	 			\caption{Five increasing binary trees of type $B$.}
	 		\label{fig:three-labeled-trees-B}
	 	\end{figure}
	 Our third result is concerned with the combinatorial interpretation of  $b_{n,k}(q)$  and $B_{n}(t,q)$ in terms of increasing binary trees of type $B$.
	 	 \begin{theorem}\label{thm-gamma-B}
	 		For   $n\geq 1$, we have 
	 		\begin{equation}\label{eq-gamma-2}
	 			B_n(t,q)=\sum\limits_{T\in \widetilde{\B}_n}t^{\omega(T)}q^{\fmaj(T)}=\sum\limits_{k=0}^{\lfloor {n/2}\rfloor}  
	 			b_{n, k}(q)t^{k}(-tq^{2k+1}; q^2)_{n-2k},
	 		\end{equation}
	 		where 
	 		$$
	 		b_{n,k}(q)=\sum_{T\in \widetilde{\B}^*_{n,k}}q^{\alpha_B(T)}.
	 		$$
	 	\end{theorem}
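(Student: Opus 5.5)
Theorem~\ref{thm-gamma-B} splits into two equalities, and we prove them separately, in parallel with the type $A$ argument for Theorem~\ref{thm-gamma}.

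\emph{First equality.} Put $\widetilde B_{n,k}(q)=\sum_{T\in\widetilde{\B}_n,\ \omega(T)=k}q^{\fmaj(T)}$. Since $B_{1,0}(q)=1$ and $B_{1,1}(q)=q$, and $\widetilde{\B}_1$ consists of the chain $1\to\infty$ (weight $0$, $\fmaj=0$) and $-1\to\infty$ (weight $1$, $\fmaj=1$), the initial conditions match. It therefore suffices to show that $\widetilde B_{n,k}(q)$ satisfies the recurrence (\ref{eq-re-B_{n,k}}).

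To do this we analyse the map $T\mapsto T'$ from the definition of $\alpha_B$. Reversing it gives $4n$ ways to build a tree of $\widetilde{\B}_n$ from $T'\in\widetilde{\B}_{n-1}$:
\begin{itemize}
\item attach $\pm n$ as a left or right child of a finite leaf $\ell_i$;
\item insert $\pm n$ above $\infty$;
\item attach $\pm n$ as the missing child of a singleton internal node $u_i$, and possibly re-sign $u_i$ according to the sign rule.
\end{itemize}
Suppose $T'$ has $k'+1$ leaves and $m$ singleton internal nodes. Then $T'$ has $n-1$ finite nodes, and counting children gives the relation $2k'+m+\cdots$ between these quantities. Using it, we check, case by case, how $\omega$ and the $\epsilon$-sum in $\fmaj$ change. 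The case increments in $\alpha_B$ are built to achieve the following:
\begin{itemize}
\item the insertions that preserve $\omega$ contribute exactly the exponents $0,1,\dots,2k$ once the change in the $\epsilon$-term of $\fmaj$ is included, giving $[2k+1]_q$;
\item the insertions that raise $\omega$ by one contribute $q^{2k-1}$ times $0,\dots,2n-2k$, giving $q^{2k-1}[2n-2k+1]_q$.
\end{itemize}
The delicate bookkeeping is that turning a singleton node into a leaf-parent removes it from $\Sint$. This shifts the indices of the later singleton nodes, and hence changes their $\epsilon$-weights $2\leaf+2i-3$. It also changes $\leaf$, which modifies every $\epsilon$-weight by $2$. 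One must verify that these shifts telescope correctly against the terms $2(m-i)$ and $2(i-1)$ in $\alpha_B$. We will do this verification explicitly for each of the seven cases.

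\emph{Second equality: a type $B$ Foata--Strehl action.} We define involutions $\varphi_x$ indexed by finite nodes $x$, acting on singleton internal nodes. Each $\varphi_x$ toggles the sign of $x$ when $x\in\Sint(T)$ and fixes every other label. These involutions commute and generate a $\mathbb Z_2^{m}$-action whose orbits are indexed by the unique representative with $\nsint=0$, that is, by an element of $\widetilde{\B}^*_{n,k}$.

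Within one orbit, $\leaf$ and $\alpha_B$ should be invariant; this is the key claim about $\alpha_B$, and we prove it by induction along the recursive definition. The singleton nodes of a representative number exactly $m=n-2k$, since in $\widetilde{\B}^*_{n,k}$ one has $n+1=$ nodes $=2(k+1)-1+m$. Negating $v_i$ adds $1$ to $\omega$ and $2\leaf(T)+2i-3=2k+2i-1$ to $\fmaj$. Summing over the orbit therefore gives
\[
q^{\alpha_B(T)}t^k\prod_{i=1}^{n-2k}\bigl(1+tq^{2k+2i-1}\bigr)=q^{\alpha_B(T)}t^k(-tq^{2k+1};q^2)_{n-2k},
\]
which is the required summand.

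\emph{Main obstacle.} The step we expect to be hardest is the invariance of $\alpha_B$ under sign toggles. The recursive definition lets the sign of a singleton node feed back into earlier steps, both through the relabelling $u\mapsto\pm|u|$ and through the distinction between the "positive $u_i$" and "negative $u_i$" cases. We will handle this by induction on $n$. If $x$ is not the last-inserted structure, the toggle commutes with $T\mapsto T'$. If $n$ is the child of $x$, the positive and negative rules for $u_i$ differ precisely by $2\leaf(T')-1+2i-2(m-i)$-type amounts. These amounts must be absorbed by the corresponding change of $\epsilon$ in $\fmaj$, and that is exactly what the chosen constants are designed to do.
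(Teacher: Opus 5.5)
Your second half is the paper's argument. Your toggles are the maps $\tau_x$, the invariance of $\alpha_B$ is Lemma~\ref{lem-alpha-B}, and the orbit sum is the same computation. The invariance is also easier than you expect: if $\pm n$ hangs below the toggled singleton, the relabelling in $T\mapsto T'$ erases that sign, so both trees have the same $T'$.

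\textbf{The gap is in the first equality.} For the raw inverse of $T\mapsto T'$, it is false that a fixed $T'$ with $\omega(T')=j$ gives $\fmaj$-increments exactly $0,\dots,2j$ on the $\omega$-preserving insertions and $2j+1,\dots,2n-1$ on the $\omega$-raising ones. Take $n=4$ and let $T'\in\widetilde{\B}_3$ have root $1$, left child $2$ (a leaf), right child $3$, and $\infty$ as the right child of $3$. Then $\omega(T')=1$ and $\fmaj(T')=\alpha_B(T')=3$. The five raising insertions are:
\begin{itemize}
\item $-4$ above $\infty$, with increment $7$;
\item $\pm 4$ as left child of $3$, with increments $3,4$;
\item $4$ as left or right child of $2$ with $2$ re-signed to $-2$, with increments $3,4$.
\end{itemize}
So you get $\{3,3,4,4,7\}$ instead of $\{3,4,5,6,7\}$. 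For instance, attaching $4$ below $3$, or below $2$ with $2\mapsto-2$, both give $(\omega,\fmaj)=(2,6)$. With $-3$ in place of $3$, the preserving increments are $\{2,2,3,3,4\}$ instead of $\{0,\dots,4\}$.

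The cause is exactly the bookkeeping you postponed. A finite leaf $\ell_i$ that becomes a singleton enters $\Sint$ at the position fixed by $|\ell_i|$. Its weight is therefore $2\leaf(T')+2R-1$, where $R$ is the number of singletons of $T'$ of smaller absolute value, and every negative singleton above it gains $2$. The increments $2i-2,2i-1$ of $\alpha_B$ do not depend on $R$, so nothing telescopes. Also, the number of insertions is $2n$, not $4n$. Below a leaf $\ell_i$ the sign of $\pm n$ is forced to equal that of $\ell_i$, while the new sign of $|\ell_i|$ is free. Below a singleton $u_i$ nothing is re-signed.

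The recurrence does hold in aggregate; in this example the excess and deficit cancel against the orbit-mate with $-3$. You can repair the step in either of two ways.
\begin{itemize}
\item Make your insertion a genuine Carlitz-type map, as the paper does with $\psi$ in type $A$. After attaching $\pm n$ below a leaf, re-sign the singleton internal nodes so that $\epsilon(T)=(\epsilon(T'),s)$ position by position. By Lemma~\ref{lem-alpha-B} this leaves $\alpha_B$ unchanged, and the increments then become exactly $\{0,\dots,2j\}$ and $\{2j+1,\dots,2n-1\}$.
\item Follow the paper, which never proves \eqref{eq-re-B_{n,k}} on trees. It shows that $c_{n,k}(q)=\sum_{T\in\widetilde{\B}^*_{n,k}}q^{\alpha_B(T)}$ satisfies the Han--Jouhet--Zeng recurrence \eqref{eq-re-2}. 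On $\widetilde{\B}^*_{n,k}$ there are no negative singletons, so $\fmaj=\alpha_B$ and the shifting problem never arises. It then obtains the first equality by combining the orbit sum with \eqref{eq-Han-B}.
\end{itemize}
The repaired version of your route is self-contained and re-derives the expansion \eqref{eq-Han-B} with nonnegative coefficients. The paper's route is shorter but depends on Theorem~\ref{thm-Han-2}.

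In either case, add that the expansion in the family $t^k(-tq^{2k+1};q^2)_{n-2k}$ is unique, since its lowest power of $t$ is $t^k$. This is what identifies your tree sum with the $b_{n,k}(q)$ of \eqref{eq-Han-B}.
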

	 		 \begin{example}
	 		Take $n=2$ in Theorem~\ref{thm-gamma-B}. The expansion of $B_2(t,q)$ is given by
	 		\[
	 		B_2(t,q)=t(q^3+2q^2+q)+(1+tq)(1+tq^3)=t\sum_{i=1}^4q^{\alpha_B(T_i)}+ q^{\alpha_B(T_5)}(1+tq)(1+tq^3),
	 		\]
	 		where $T_1$, $T_2$,  $T_3$, $T_4$ and $T_5$ are shown in Figure~\ref{fig:three-labeled-trees-B} with $\alpha_B(T_1)=1$, $\alpha_B(T_2)=2$,   $\alpha_B(T_3)=2$, $\alpha_B(T_4)=3$ and $\alpha_B(T_5)=0$.
	 	\end{example}

	 	Let 
	 	$$
	 	\NT_{n,k}=\{T\in \widetilde{\B}_{n}\mid \leaf(T)=k+1,  \neg(T)=0\},
	 	$$
	 	where $\neg(T)$ denotes the number of negative   nodes of $T$. 
	 		In analogy to Theorem \ref{thm-d_{n,k}(q)},  we derive a combinatorial interpretation of the quotient $ {b_{n,k}(q)/(1+q)^k(-q; q^2)_k}$ via enumerative polynomials on certain   increasing binary trees of type $B$.
	 	\begin{theorem}\label{thm-e_{n,k}(q)}
	 		For   $n,k\geq 1$, we have
	 		$$
	 		b_{n,k}(q)=(1+q)^k(-q; q^2)_ke_{n,k}(q) ,
	 		$$
	 		where 
	 		$$
	 		e_{n,k}(q)= \sum_{T\in \NT_{n,k}}q^{\alpha_B(T)}.
	 		$$
	 		
	 	\end{theorem}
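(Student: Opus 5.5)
Since Theorem \ref{thm-gamma-B} (taken as given) identifies $b_{n,k}(q)=\sum_{T\in\widetilde{\B}^*_{n,k}}q^{\alpha_B(T)}$, the identity $b_{n,k}(q)=(1+q)^k(-q;q^2)_k e_{n,k}(q)$ can be proved directly by a sign-flipping argument on $\widetilde{\B}^*_{n,k}$. Every $T\in\widetilde{\B}^*_{n,k}$ has no negative singleton internal nodes. Hence its only possibly negative nodes are its $k$ finite leaves and its nodes with two children. In a binary tree containing the leaf $\infty$ and with $k+1$ leaves in total, there are exactly $k$ nodes with two children. Thus $T$ has exactly $2k$ sign-free positions, and erasing all signs defines a surjection $\pi:\widetilde{\B}^*_{n,k}\to\NT_{n,k}$ whose fibres have size $2^{2k}$. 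It then suffices to show that for each $S\in\NT_{n,k}$,
\[
\sum_{T\in\pi^{-1}(S)}q^{\alpha_B(T)-\alpha_B(S)}=(1+q)^k(-q;q^2)_k=\prod_{j=1}^{k}(1+q)(1+q^{2j-1}).
\]
The natural aim is to pair the $2k$ free positions into $k$ pairs (a leaf with a branching node). Toggling the first element of the $j$-th pair should contribute a factor $1+q$, and toggling the second should contribute $1+q^{2j-1}$, independently across pairs.

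To verify this, I will unwind the recursive definition of $\alpha_B$ along the deletion sequence $T\to T'\to\cdots$ and track how a sign change affects each increment. For a leaf $\pm n$ attached to a leaf $\ell_i$, the increment $2i-2$ or $2i-1$ is insensitive to the sign of $n$, but the ordering of $\Leaf(T')$ uses signed values. The rule relabelling the parent $u$ as $\pm|u|$ when $u$ is a singleton also transfers signs. Within the class $\widetilde{\B}^*_{n,k}$, a removed leaf's sign is copied onto a parent that becomes a leaf, so the sign of a leaf of $T$ travels down the chain until the moment that leaf is created. The same holds for a branching node: its sign appears only as the sign of a singleton $u_i$ at the moment its second child is attached. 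Using the four singleton cases, inserting $n$ versus $-n$ under a positive $u_i$ differs by exactly $1$ in $\alpha_B$, which gives a factor $1+q$. A negative versus positive $u_i$ shifts the increment from $2\leaf(T')-1+2(m-i)$ to $2(2\leaf(T')-1)+2(i-1)$.

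I therefore expect the following decomposition. Each time the number of leaves grows from $j$ to $j+1$, through an insertion at a singleton node, one factor $(1+q)$ arises from the sign of the new leaf and one factor $(1+q^{2j-1})$ arises from the sign of the branching node $u_i$. This rests on the claim that the sign of $u_i$ shifts the whole contribution by exactly $2j-1$, after summing or re-indexing over $i$. The plan is to prove by induction on $n$, using the recursion defining $\alpha_B$, that
\[
\sum_{T\in\pi^{-1}(S)}q^{\alpha_B(T)}=q^{\alpha_B(S)}\prod_{j=1}^{k}(1+q)(1+q^{2j-1}).
\]
The induction step removes node $n$ and separates three cases: leaf attachment, parent of $\infty$, and singleton attachment. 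This is the route I would follow, though, as noted below, the negative-$u_i$ case may force a modification.

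The main obstacle is that the increment in the negative-$u_i$ case depends on $i$ with the opposite monotonicity, $2(i-1)$ versus $2(m-i)$. So the shift is not literally $2j-1$ termwise, and the signed ordering of leaves in later steps also changes. If a termwise matching fails, the fallback is to compare generating functions instead. First, derive from the recursion for $\alpha_B$ a recurrence for $e_{n,k}(q)$:
\[
e_{n,k}=[2k+1]_q\,e_{n-1,k}+q^{2k-1}[n+1-2k]_{q^2}\,e_{n-1,k-1},
\]
by counting insertions of $n$ into positive trees. Then check that $(1+q)^k(-q;q^2)_k e_{n,k}$ satisfies \eqref{eq-re-2} with the same initial conditions. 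The ratio of the prefactors at $k$ and $k-1$ is $(1+q)(1+q^{2k-1})$, which matches \eqref{eq-re-2} exactly. Hence the whole difficulty lies in establishing the $e_{n,k}$ recurrence combinatorially, which is a direct case count.
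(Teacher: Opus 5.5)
Your fallback is sound and is exactly the paper's proof. The route you say you would actually follow, however, rests on a reduction that is false, not merely hard. The fibre-wise identity $\sum_{T\in\pi^{-1}(S)}q^{\alpha_B(T)}=q^{\alpha_B(S)}(1+q)^k(-q;q^2)_k$ already fails for $n=3$, $k=1$.

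Let $S\in\NT_{3,1}$ have root $1$ with left child $3$ and right child $2$, and let $\infty$ be the right child of $2$. Its fibre consists of the four trees with root $\pm1$ and leaf $\pm3$. Deleting $\pm3$ relabels nothing, since its parent has two children. What remains is the right chain $\pm1,2,\infty$, which has $\alpha_B=0$, one leaf, and $u_1=\pm1$, $u_2=2$. Since $\pm3$ hangs from $u_1$, the increments are $3,4$ when the root is $1$ and $2,3$ when it is $-1$. So the fibre sums to $q^2(1+q)^2=q^{\alpha_B(S)-1}(1+q)^2$.

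Dually, take $S$ to be the right chain $1,2,\infty$ with $3$ as the left child of $2$. Then $\alpha_B(S)=1$ and the fibre sum is $q+q^2+q^4+q^5=q(1+q)(1+q^3)$. The two defects cancel. Only the total over all five trees of $\NT_{3,1}$ gives $(1+q)^2e_{3,1}(q)=(1+q)^2(2q+q^2+2q^3)$.

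This is precisely the obstacle you flagged. Inside one fibre the index $i$ of the branching node in $T'$ is fixed. The compensation ``after summing or re-indexing over $i$'' pairs a negative $u_i$ with a positive $u_{m+1-i}$, whose increments differ by exactly $2\leaf(T')-1$. That pairing necessarily mixes different fibres of $\pi$. Since the fibre-wise identity is false, no induction on it can succeed.

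So discard the sign-erasing argument and present your fallback as the proof; it coincides with the paper's. When you write out the case count, note two facts. First, node $n$ of $S\in\NT_{n,k}$ is positive and maximal, hence either a leaf or the parent of $\infty$. Second, the relabelling $u\mapsto|u|$ keeps $T'$ inside $\NT$. The three cases then contribute as follows:
\begin{itemize}
\item $n$ a left/right child of $\ell_i$ (increments $2i-2,2i-1$): $[2k]_q e_{n-1,k}$;
\item $n$ the parent of $\infty$: $q^{2k}e_{n-1,k}$;
\item $n$ in the empty slot of the positive singleton $u_i$ (increment $2k-1+2(M-i)$ with $M=n+1-2k$): $q^{2k-1}[n+1-2k]_{q^2}e_{n-1,k-1}$.
\end{itemize}
Combine this with $e_{n,0}=b_{n,0}=1$ and the prefactor ratio $(1+q)(1+q^{2k-1})$. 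Induction on $n$ against \eqref{eq-re-2} then completes the proof.
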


	 	In \cite{HanJouhetZeng2013qtriangles},  Han, Jouhet and Zeng introduced the following   $q$-analogue of $B_{2n}(-1) = (-1)^n 4^n E_{2n}$ (where the $E_{2n}$ are the famous secant numbers): 
	 	\[
	 	E_{2n}^{*}(q)=(-1)^nq^{n(n+1)}
	 	B_{2n}(-q^{-2n-1},q)
	 	\]
	 	They further  proved that 
	 	there exists a polynomial $G^*_{2n}(q)\in\mathbb Z[q]$ such that $G^*_{2n}(1)=E_{2n}$ and
	 	\[
	 	E^*_{2n}(q)=   (1+q)^n  (-q; q^2)_{n} G^*_{2n}(q)
	 	\]
	 	  and posed the following conjecture.
	 	  
	 	  \begin{conjecture}[\cite{HanJouhetZeng2013qtriangles}, Conjecture 8]\label{con}
	 	  	For    $n\geq 0$, all coefficients of the polynomials $G^*_{2n}(q)$ are positive.
	 	  	\end{conjecture}
	 Since $G^*_{2n}(1) = E_{2n}$, the polynomial $G^*_{2n}(q)$ can be viewed as a new   refinement of the
	 secant number.

	 		\begin{theorem}\label{thm:positive-G-star}
	 	For  $n\geq 0$, all coefficients of $G_{2n}^{*}(q)$ are positive
	 	integers. More precisely, for $n\geq 1$,
	 	\[
	 	G_{2n}^{*}(q)
	 	=\sum_{T\in\mathsf{NT}_{2n,n}}q^{\alpha_B(T)-n^2},
	 	\]
	 	and
	 	\[
	 	[q^j]G_{2n}^{*}(q)>0
	 	\qquad\text{for }0\leq j\leq n(n-1).
	 	\]
	 	In particular, the degree of $G^*_{2n}(q)$ is $n(n-1)$.
	 	 
	 	\end{theorem}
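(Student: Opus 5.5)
Set $t=-q^{-2n-1}$ in Theorem~\ref{thm-gamma-B} with $n$ replaced by $2n$:
$$
B_{2n}(t,q)=\sum_{k=0}^{n} b_{2n,k}(q)\,t^k\,(-tq^{2k+1};q^2)_{2n-2k}.
$$
With this choice the factor $(-tq^{2k+1};q^2)_{2n-2k}=\prod_{j=0}^{2n-2k-1}(1-q^{2k+1+2j-2n-1})$ contains the factor with exponent $0$, namely $j=n-k$, whenever $0\le n-k\le 2n-2k-1$, i.e.\ whenever $k<n$. So every term with $k<n$ vanishes. Only $k=n$ survives, where the product is empty, giving
$$
B_{2n}(-q^{-2n-1},q)=b_{2n,n}(q)(-1)^n q^{-n(2n+1)}.
$$
Multiplying by $(-1)^nq^{n(n+1)}$ yields $E^*_{2n}(q)=q^{n(n+1)-n(2n+1)}b_{2n,n}(q)=q^{-n^2}b_{2n,n}(q)$.

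Theorem~\ref{thm-e_{n,k}(q)} gives $b_{2n,n}(q)=(1+q)^n(-q;q^2)_n e_{2n,n}(q)$. Comparing with $E^*_{2n}=(1+q)^n(-q;q^2)_nG^*_{2n}$ and cancelling the nonzero polynomial factor in $\mathbb{Z}[q]$, we get
$$
G^*_{2n}(q)=q^{-n^2}e_{2n,n}(q)=\sum_{T\in\NT_{2n,n}}q^{\alpha_B(T)-n^2},
$$
which is the first claim. The case $n=0$ is trivial, since $G^*_0=1$.

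For positivity it remains to show that $\{\alpha_B(T)-n^2: T\in \NT_{2n,n}\}$ is exactly the interval $[0,n(n-1)]$. A tree in $\NT_{2n,n}$ has $2n+1$ nodes and $n+1$ leaves, hence $n$ internal nodes of degree $2$ and no singleton internal nodes. With all labels positive, only the following cases of the recursion occur when node $2n$ (or the parent of $\infty$) is removed:
\begin{itemize}
\item left child of $\ell_i$, contributing $2i-2$;
\item right child of $\ell_i$, contributing $2i-1$;
\item parent of $\infty$, contributing $2\leaf(T')-2$;
\item child of a positive singleton $u_i$, contributing $2\leaf(T')-1+2(m-i)$.
\end{itemize}

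I would build these trees by a growth process, inserting $1,\dots,2n$ one at a time. Because the final tree has no singleton nodes, the process alternates in the way an André-type or alternating structure does: each insertion of a new leaf under a former leaf creates a singleton, which must later be completed. This makes $e_{2n,n}(q)$ a sum over sequences of choices. For each $j$, I would identify the cumulative range of $\alpha_B$ contributions and show it is a set of consecutive integers.

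I would verify two explicit extremal trees: a minimal one attaining $\alpha_B=n^2$ (for instance, $\infty$ pushed down the right arm by odd labels with even labels as left leaves), and a maximal one attaining $n^2+n(n-1)=2n^2-n$. Then I would show that the attainable values at each step form an interval. The argument is by induction on $n$: passing from $2n-2$ to $2n$ adds two nodes, and the pair contributes a range of consecutive values. Combining this range with the inductive interval $[(n-1)^2,(n-1)^2+(n-1)(n-2)]$ gives $[n^2,n^2+n(n-1)]$, because the new range has length at least the needed shift. This yields the degree $n(n-1)$.

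The main obstacle is this last step, namely controlling the exact set of attainable $\alpha_B$ values under the no-singleton, all-positive constraint. If the direct interval argument becomes messy, a fallback is available. Applying the recurrence~\eqref{eq-re-2} of Theorem~\ref{thm-Han-2} together with Theorem~\ref{thm-e_{n,k}(q)}, the polynomials $e_{n,k}$ satisfy a recurrence with factors of the form $[2k+1]_q$ and $q^{2k-1}[n+1-2k]_{q^2}$. One can then prove by induction that each $e_{n,k}$ has no internal zero coefficients, since such products and sums of polynomials with interval support again have interval support, and track the degree from the recurrence.
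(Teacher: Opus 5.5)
Your derivation of $G^*_{2n}(q)=q^{-n^2}e_{2n,n}(q)=\sum_{T\in\mathsf{NT}_{2n,n}}q^{\alpha_B(T)-n^2}$ is correct and coincides with the paper's. You specialize $t=-q^{-2n-1}$, kill every $k<n$ term through the factor $1-q^{0}$, and divide by $(1+q)^n(-q;q^2)_n$ using Theorem~\ref{thm-e_{n,k}(q)}. The gap is in the positivity and degree assertion, which is the real content of the statement.

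Your main combinatorial plan is never carried out, and it is partly wrong. The tree you propose as a minimizer has right arm $1,3,\ldots,2n-1,\infty$, with $2j$ the left child of $2j-1$. Adding $2j-1$ and then $2j$ contributes $(2j-2)+(2j-1)$, so $\alpha_B=\sum_{j=1}^{n}(4j-3)=2n^2-n$, which is the \emph{maximum}. A genuine minimizer is the right arm $1,2,\ldots,n,\infty$ with $2n+1-j$ the left child of $j$; its increments are $2n-2j+1$, giving $n^2$. Moreover, an induction from $\mathsf{NT}_{2n-2,n-1}$ to $\mathsf{NT}_{2n,n}$ cannot close. Deleting $2n$ from $T\in\mathsf{NT}_{2n,n}$ gives a tree in $\mathsf{NT}_{2n-1,n-1}$ with one singleton. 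One more step of the $\alpha_B$ recursion can then land in $\mathsf{NT}_{2n-2,n-2}$ as well as in $\mathsf{NT}_{2n-2,n-1}$. So you are forced to control all $e_{m,k}$, not just the diagonal ones.

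Your fallback is the right route; it is the paper's. However, the reason you give for it does not work. The factor $[m+1-2k]_{q^2}$ is supported on even exponents only, and a sum of polynomials with interval supports need not have interval support: for example, $(1+q)+q^2(1+q^2)$ misses $q^3$. So ``products and sums of interval-supported polynomials are interval-supported'' proves nothing here. What is needed is an exact alignment of the two summands, and this is why the degree bookkeeping is essential rather than an afterthought.

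Put $h_{m,k}=q^{-k^2}e_{m,k}$. Since $k^2-(k-1)^2=2k-1$, the recurrence becomes $h_{m,k}=[2k+1]_q h_{m-1,k}+[m+1-2k]_{q^2}h_{m-1,k-1}$, with $h_{m,0}=1$. Now prove by induction on $m$ that $h_{m,k}$ has positive coefficients in every degree $0,1,\ldots,D_{m,k}$ and none beyond, where $D_{m,k}=k(2m-3k-1)$.
\begin{itemize}
\item For $m>2k$, the first summand alone fills $[0,D_{m,k}]$, because $D_{m-1,k}+2k=D_{m,k}$. The second summand has nonnegative coefficients and degree $D_{m-1,k-1}+2(m-2k)=D_{m,k}$, so it neither creates gaps nor extends the range.
\item For $m=2k$, the recurrence collapses to $h_{2k,k}=h_{2k-1,k-1}$, and $D_{2k-1,k-1}=k(k-1)=D_{2k,k}$.
\end{itemize}
Taking $m=2n$ and $k=n$ gives $G^*_{2n}=h_{2n,n}$, with positive coefficients in exactly the degrees $0,\ldots,n(n-1)$.
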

	 	
	 	The rest of this paper is organized as follows. In Section~3, we introduce a generalized Foata--Strehl action on increasing binary trees,  thereby proving the second equality stated in (\ref{eq-gamma-1}). Starting with the Carlitz insertion bijection on permutations, in Section~4 we construct a Carlitz‑type insertion bijection for increasing binary trees and establish a bijection between permutations and increasing binary trees. This proves the first equality in (\ref{eq-gamma-1}) and completes the proof of Theorem~\ref{thm-gamma}. In Section~5, we construct a generalized Foata--Strehl action  on increasing binary trees  of type $B$ and establish a combinatorial interpretation for the coefficients $b_{n,k}(q)$, thereby completing the proof of Theorem~\ref{thm-gamma-B}.   In Section~6,   we investigate   the combinatorial interpretations  of  the quotients 
	 	$a_{n,k}(q)/ (-q;q)_{k-1}$  and $b_{n,k}(q)/(1+q)^k(-q; q^2)_k$ and the polynomial $G^*_{2n}(q)$, thereby completing the proofs of Theorems \ref{thm-d_{n,k}(q)}, \ref{thm-e_{n,k}(q)} and  \ref{thm:positive-G-star}. 
	 	
	 		\section{A generalized Foata–Strehl action on  increasing binary trees}
	 	In this section, we develop a generalized Foata–Strehl action 
	 	on increasing binary  trees, thereby proving the second equality   stated in (\ref{eq-gamma-1}).
	 	
	 	By  simple computation,   it is plain to check that a binary tree $T\in \B_n$ satisfies the following relations. 
	 	\begin{lemma}\label{lem-tree}
	 		Given a binary tree $T\in \B_n$, we have
	 		\begin{equation}\label{eq-sint}
	 			\sint(T)=n+1-2\leaf(T),
	 		\end{equation}
	 		and 
	 		\begin{equation}\label{eq-left}
	 			\l(T)= \lint(T)+ \leaf(T)-1,
	 		\end{equation}
	 		where  $\sint(T)$  denotes the number of singleton internal nodes    of $T$.
	 	\end{lemma}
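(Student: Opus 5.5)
The plan is to prove both identities by double counting nodes and edges of $T\in\B_n$. Partition the node set into leaves, singleton internal nodes, and internal nodes with two children. Write $\leaf(T)$, $\sint(T)$ and $d(T)$ for the sizes of these three classes. Then
\[
\leaf(T)+\sint(T)+d(T)=n .
\]

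Next, count edges. There are $n-1$ edges in total, since $T$ is a tree on $n$ nodes. Counting edges by their upper endpoint gives
\[
\sint(T)+2d(T)=n-1 .
\]
Eliminating $d(T)$ between the two equations gives $d(T)=\leaf(T)-1$. Substituting back yields $\sint(T)=n+1-2\leaf(T)$, which is (\ref{eq-sint}).

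For (\ref{eq-left}), count left edges by their parent node. A node with two children contributes exactly one left edge. A singleton internal node contributes one left edge exactly when it is a left internal node, so these contribute $\lint(T)$ in total. Leaves contribute nothing. Hence
\[
\l(T)=d(T)+\lint(T)=\leaf(T)-1+\lint(T),
\]
using $d(T)=\leaf(T)-1$ from the first part.

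There is no real obstacle here. The only point needing care is the classification of internal nodes: every internal node of a binary tree has one or two children, so the three classes above exhaust the nodes. The degenerate case $n=1$ is also consistent, since then $\leaf(T)=1$, $\sint(T)=0=n+1-2$, and $\l(T)=0$.
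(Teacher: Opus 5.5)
Your proof is correct: the node count and the edge count by upper endpoint force $d(T)=\leaf(T)-1$, which gives both identities. The paper gives no proof at all (it only says the relations are plain to check by simple computation), and your double-counting argument supplies exactly that routine verification, valid for any binary tree, including the type $B$ trees with $n+1$ nodes where the paper reuses the identity.
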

	 	
	 	Given an increasing binary  tree $T\in \B_n$  and a node $x$ of $T$,  we define an increasing binary tree $\theta_x(T)$ as follows:
	 	\begin{itemize}
	 		\item If node $x$ is a singleton internal node, let $\theta_x(T)$ be the tree obtained from $T$ by swapping the child type (left/right) of node $x$.
	 		\item Otherwise, set $\theta_x(T)=T$.
	 	\end{itemize}
	 	\begin{figure}[htbp]
	 		\centering
	 		
	 		\tikzset{
	 			vertex/.style={circle,fill=black,inner sep=1.45pt},
	 			active/.style={circle,fill=red,inner sep=1.45pt},
	 			edge/.style={thin},
	 			activeedge/.style={red,thin},
	 		}
	 		
	 		\begin{tikzpicture}[scale=0.5]
	 			
	 			
	 			\node[vertex,label=above:$1$] (a1) at (0,0) {};
	 			
	 			\node[vertex,label=left:$3$]  (a3) at (-1.5,-1.3) {};
	 			\node[vertex,label=right:$2$] (a2) at (1.5,-1.3) {};
	 			
	 			\node[vertex,label=left:$5$]  (a5) at (-2.35,-2.6) {};
	 			\node[active,label=right:\textcolor{red}{$4$}] (a4) at (-0.65,-2.6) {};
	 			
	 			\node[vertex,label=left:$10$] (a10) at (-2.9,-4.0) {};
	 			\node[vertex,label=right:$6$] (a6) at (-1.35,-4.0) {};
	 			
	 			\node[vertex,label=left:$8$]  (a8) at (-2.0,-5.35) {};
	 			\node[vertex,label=right:$7$] (a7) at (-0.7,-5.35) {};
	 			
	 			\node[vertex,label=left:$9$]  (a9) at (-2.45,-6.7) {};
	 			
	 			\draw[edge]
	 			(a1)--(a3)
	 			(a1)--(a2)
	 			(a3)--(a5)
	 			(a3)--(a4)
	 			(a5)--(a10)
	 			(a6)--(a8)
	 			(a6)--(a7)
	 			(a8)--(a9);
	 			
	 			\draw[activeedge] (a4)--(a6);
	 			
	 			\node at (-2.8,-7.65) {$T_1$};
	 			
	 			\node at (2.8,-3.35) {\Large $\longleftrightarrow$};

	 			
	 			\node[vertex,label=above:$1$] (b1) at (8.2,0) {};
	 			
	 			\node[vertex,label=left:$3$]  (b3) at (6.7,-1.3) {};
	 			\node[vertex,label=right:$2$] (b2) at (9.7,-1.3) {};
	 			
	 			\node[vertex,label=left:$5$]  (b5) at (5.85,-2.6) {};
	 			\node[active,label=right:\textcolor{red}{$4$}] (b4) at (7.55,-2.6) {};
	 			
	 			\node[vertex,label=left:$10$] (b10) at (5.3,-4.0) {};
	 			\node[vertex,label=right:$6$] (b6) at (8.25,-4.0) {};
	 			
	 			\node[vertex,label=left:$8$]  (b8) at (7.6,-5.35) {};
	 			\node[vertex,label=right:$7$] (b7) at (8.9,-5.35) {};
	 			
	 			\node[vertex,label=left:$9$]  (b9) at (7.15,-6.7) {};
	 			
	 			\draw[edge]
	 			(b1)--(b3)
	 			(b1)--(b2)
	 			(b3)--(b5)
	 			(b3)--(b4)
	 			(b5)--(b10)
	 			(b6)--(b8)
	 			(b6)--(b7)
	 			(b8)--(b9);
	 			
	 			\draw[activeedge] (b4)--(b6);
	 			
	 			\node at (7.9,-7.65) {$T_2$};
	 			
	 		\end{tikzpicture}
	 		
	 		\vspace{-2mm}
	 		\caption{An example of the transformation $\theta_{4}$.}
	 		\label{fig:theta4}
	 	\end{figure}
	 	
	 	For example, let  $T_1$ and $T_2$  be increasing binary trees  as shown in  Figure \ref{fig:theta4}.
	 	Then we have  $\theta_4(T_1)=T_2$ and $\theta_4(T_2)=T_1$.

	 	\begin{lemma}\label{lem-alpha}
	 		Let $T\in \B_n$. For any $x\in [n]$, we have 
	 		$\alpha(T)=\alpha(\theta_x(T))$. 
	 	\end{lemma}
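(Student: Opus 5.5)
We argue by induction on $n$, following the recursive definition of $\alpha$. If $x$ is not a singleton internal node of $T$, then $\theta_x(T)=T$ and there is nothing to prove. So assume $x\in\Sint(T)$ and write $T^*=\theta_x(T)$. Two facts make the induction work. First, $T$ and $T^*$ have the same underlying parent map and differ only in the side of the unique child of $x$, so they have the same leaves and the same singleton internal nodes. Second, removing node $n$ commutes with $\theta_x$ whenever the child of $x$ is not $n$: in that case $\widehat{T^*}=\theta_x(\hat T)$.

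The case split is on the position of $n$.

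\emph{Case 1: $n$ is not the child of $x$.} Then $\widehat{T^*}=\theta_x(\hat T)$, with $x$ still a singleton internal node of $\hat T$ (if $n$ were the second child of $x$, then $x$ would not be a singleton in $T$). By induction, $\alpha(\widehat{T^*})=\alpha(\hat T)$. It remains to see that the increment added when attaching $n$ is the same for $T$ and $T^*$. The increment depends only on:
\begin{itemize}
\item whether the parent $p$ of $n$ is a leaf $\ell_i$ or a singleton $u_i$ of $\hat T$;
\item its rank $i$ within $\Leaf(\hat T)$ or $\Sint(\hat T)$;
\item when $p=u_i$, whether $n$ is the left or right child of $p$.
\end{itemize}
The sets $\Leaf$ and $\Sint$ of $\hat T$ and $\theta_x(\hat T)$ coincide, so ranks are preserved. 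The side of $n$ is untouched because $p\neq x$: indeed $p=x$ would make $x$ either non-singleton in $T$ or the parent of $n$ as its only child, which is excluded. Hence the increments agree.

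\emph{Case 2: $n$ is the child of $x$.} Then $\hat T=\widehat{T^*}$, and in $\hat T$ the node $x$ has become a leaf, say $x=\ell_i$. In both $T$ and $T^*$ the node $n$ is attached to the leaf $\ell_i$, once as a left child and once as a right child. The first rule of the definition adds $i-1$ regardless of side, so $\alpha(T)=\alpha(T^*)$.

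The only delicate point is the bookkeeping in Case 1, namely checking that $\Sint(\hat T)=\Sint(\theta_x(\hat T))$ and that $x$ is never the parent of $n$ there. This follows directly from $x$ having exactly one child in $T$. The base case $n=1$ is trivial.
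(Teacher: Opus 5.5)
Your proof is correct and follows essentially the same route as the paper. Both argue by induction on $n$ and split on whether node $n$ is the child of $x$. If it is, then $\widehat{T}=\widehat{\theta_x(T)}$ and the leaf rule adds $i-1$ regardless of side. If it is not, then $\widehat{\theta_x(T)}=\theta_x(\widehat{T})$, the sets $\Leaf$ and $\Sint$ agree, and node $n$ keeps its parent and side, so the increments coincide. Your explicit check that the parent of $n$ cannot be $x$ in that second case is a detail the paper leaves implicit.
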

	 \begin{proof}
	 	We proceed by induction on $n$. The statement is clear for $n=1$.
	 	Assume that $n\geq 2$ and that the statement holds for all trees
	 	in $\mathcal T_{n-1}$.
	 	
	 	Let $Q=\theta_x(T)$. If $x\notin\operatorname{Sint}(T)$, then
	 	$Q=T$, so assume that $x\in\operatorname{Sint}(T)$.
	 	Let $\widehat T$ and $\widehat Q$ be the trees obtained from
	 	$T$ and $Q$, respectively, by removing node $n$ together with
	 	its incident edge.
	 	
	 	If node $n$ is the child of node $x$, then
	 	$\widehat T=\widehat Q$, and $x$ is a leaf of this common tree.
	 	Suppose that $x$ is its $i$-th smallest leaf. By the first case
	 	in the definition of $\alpha$, we have
	 	\[
	 	\alpha(T)=\alpha(\widehat T)+i-1
	 	=\alpha(\widehat Q)+i-1=\alpha(Q).
	 	\]
	 	
	 	Otherwise, $x$ remains a singleton internal node in $\widehat T$,
	 	and $\widehat Q=\theta_x(\widehat T)$. In particular,
	 	\[
	 	\operatorname{Leaf}(\widehat T)
	 	=\operatorname{Leaf}(\widehat Q)
	 	\quad\text{and}\quad
	 	\operatorname{Sint}(\widehat T)
	 	=\operatorname{Sint}(\widehat Q).
	 	\]
	 	Moreover, node $n$ has the same parent and the same left/right
	 	position in $T$ and $Q$. Hence the recursive definition of
	 	$\alpha$ gives
	 	\[
	 	\alpha(T)-\alpha(\widehat T)
	 	=
	 	\alpha(Q)-\alpha(\widehat Q).
	 	\]
	 	By the induction hypothesis,
	 	$\alpha(\widehat T)=\alpha(\widehat Q)$, and therefore
	 	$\alpha(T)=\alpha(Q)$, completing the proof.
	 \end{proof}
	 By the definition of $\theta_x$, it is plain to see that the number of leaves is invariant under the transformation $\theta_x$. This, together with the definition of $\theta_x(T)$ and Lemma \ref{lem-alpha}, ensures that the transformation $\theta_x$ has the following desired properties.
	 	
	 	\begin{lemma}\label{lem-theta}
	 		Let $T\in \B_n$. For any $x\in [n]$, we have $\theta_x(T)\in \B_n$. Moreover,  the resulting tree $\theta_x(T)$ verifies the following properties.
	 		\begin{itemize}
	 			\item The node $x$ is a left (resp.,  right)   internal node of $T$  if and only 
	 			if  the node $x$ is a right  (resp.,  left)   internal node of $\theta_x(T)$.
	 			\item  If node $x$ is a left   internal node of $T$ and  is the $y$-th smallest singleton internal node of $T$, then we have 
	 			$$
	 			\maj(\theta_x(T))=\maj(T)-\leaf(T)-y+1,
	 			$$
	 			and 
	 			$$
	 			\l(\theta_x(T))=\l(T)-1.
	 			$$
	 			
	 			\item  If    node $x$ is a right    internal node of $T$ and is the $y$-th smallest singleton internal node of $T$, then we have 
	 			$$
	 			\maj(\theta_x(T))=\maj(T)+\leaf(T)+y-1,
	 			$$
	 			and 
	 			$$
	 			\l(\theta_x(T))=\l(T)+1.
	 			$$
	 		\end{itemize}
	 	\end{lemma}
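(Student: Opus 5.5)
The first claim, that $\theta_x(T)\in\B_n$, is immediate from the definition. If $x$ is a singleton internal node, then swapping the left/right type of its unique child edge changes neither the underlying rooted tree nor its labels. So the labels still form $[n]$ and still increase along every root-to-leaf path, and every internal node still has at most one left and at most one right child. If $x$ is not a singleton internal node, then $\theta_x(T)=T$. The first bullet follows the same way: a singleton internal node with only a left child becomes one with only a right child, and conversely.

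For the remaining two bullets I will compare the two summands of $\maj=\alpha+\sum_{i}(\leaf+i-1)\beta_i$ separately.

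\begin{itemize}
\item \textbf{The $\alpha$ summand.} By Lemma~\ref{lem-alpha}, $\alpha(\theta_x(T))=\alpha(T)$.
\item \textbf{The weighted sum.} As noted before the statement, $\theta_x$ preserves leaves, so $\leaf(\theta_x(T))=\leaf(T)$. The set of singleton internal nodes is also unchanged, as a set of labels: only the side of $x$'s child changes, not the number of children of any node. Hence the ordering $v_1<\cdots<v_m$ is the same in $T$ and in $\theta_x(T)$, and $x=v_y$ in both.
\item \textbf{Effect on $\beta$.} The vector $\beta(\theta_x(T))$ differs from $\beta(T)$ only in coordinate $y$. If $x$ is left in $T$, then $\beta_y$ drops from $1$ to $0$, and the sum decreases by exactly $\leaf(T)+y-1$. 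This gives $\maj(\theta_x(T))=\maj(T)-\leaf(T)-y+1$. The right case is symmetric and gives the increase $\leaf(T)+y-1$.
\end{itemize}

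For the left-edge count, I will use \eqref{eq-left} of Lemma~\ref{lem-tree}, namely $\l=\lint+\leaf-1$. Since $\leaf$ is invariant and $\lint$ changes by $-1$ (left to right) or $+1$ (right to left), $\l$ changes by $\mp1$ accordingly. Alternatively, this is direct: exactly one edge, the one from $x$ to its child, switches between left and right.

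Lemma~\ref{lem-alpha} already carries the only nontrivial content, so the argument is essentially bookkeeping. The step I would state most carefully is the invariance of $\Sint$, with its order and therefore the index $y$, under $\theta_x$. This is what makes the weight $\leaf(T)+y-1$ the same on both sides.
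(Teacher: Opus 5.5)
Your proof is correct and follows essentially the same route as the paper. The paper also deduces the lemma directly from the definition of $\theta_x$, the invariance of the leaf count, and Lemma~\ref{lem-alpha} for the $\alpha$-part of $\maj$; your explicit check that $\Sint$ and its ordering (hence the index $y$) are unchanged spells out the bookkeeping the paper leaves implicit.
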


	 	In view of Lemma \ref{lem-theta}, it is not hard to see that the transformation $\theta_x$ is an involution
	 	on $\B_n$ and $\theta_x$ and $\theta_y$ commute for all $x,y\in [n]$. For any $S\subseteq [n]$, we can define the
	 	function $\theta_S\colon \B_n\to \B_n$ by
	 	\[
	 	\theta_S=\prod_{x\in S}\theta_x,
	 	\]
	 	where the product is the functional composition. Hence the group $\mathbb{Z}_2^{n}$
	 	acts on $\B_n$ via the functions $\theta_S$ for $S\subseteq [n]$.
	 	Such an action is called the {\em generalized Foata–Strehl action }(GFS‑action for short) on increasing binary trees.
	 	
	 	 \begin{theorem}\label{thm-expansion}
	 	 	For $n\geq 1$,  we have
	 	 		\begin{equation}\label{eq-expansion}
	 	 		\sum\limits_{T\in \B_n}t^{\l(T)}q^{\maj(T)}=\sum\limits_{k=1}^{\lfloor {n+1\over 2}\rfloor}  
	 	 		a_{n, k}(q)t^{k-1}\prod\limits_{j=k}^{n-k} (1+tq^j),
	 	 	\end{equation}
	 	 	where 
	 	 	$$
	 	 	a_{n,k}(q)=\sum_{T\in \B^*_{n,k}}q^{\alpha(T)}.
	 	 	$$
	 	 	\end{theorem}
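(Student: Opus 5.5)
Proof plan: partition $\B_n$ into orbits of the GFS-action and sum $t^{\l(T)}q^{\maj(T)}$ orbit by orbit. Every orbit contains exactly one tree with no left internal nodes: applying $\theta_x$ to every $x\in\Lint(T)$ turns all singleton internal nodes into right internal nodes. Conversely, the orbit of such a tree $T_0$ consists of the $2^{m}$ trees $\theta_S(T_0)$ with $S\subseteq\Sint(T_0)$, where $m=\sint(T_0)$. Each orbit is therefore indexed by a unique $T_0\in\B^*_{n,k}$ for some $k$. The range $1\le k\le\lfloor (n+1)/2\rfloor$ comes from Lemma \ref{lem-tree}, since $m=n+1-2k\ge 0$.

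Fix such a $T_0$ with $\leaf(T_0)=k$. By Lemma \ref{lem-theta}, each $\theta_x$ preserves $\leaf$ and the set $\Sint$, so the sorted list $v_1<\cdots<v_m$ of singleton internal nodes is the same throughout the orbit. By Lemma \ref{lem-alpha}, $\alpha$ is constant on the orbit. Hence for $T=\theta_S(T_0)$, the definition of $\maj$ gives
\[
\maj(T)=\alpha(T_0)+\sum_{i:\,v_i\in S}(k+i-1).
\]
By Lemma \ref{lem-tree}, $\l(T)=\lint(T)+k-1=|S|+k-1$. One can equivalently get both formulas by iterating the third item of Lemma \ref{lem-theta}; either way, the independence of the $v_i$ and the constancy of $\leaf$ are what make the contributions add.

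Summing over all $S\subseteq\Sint(T_0)$, the orbit contributes
\[
q^{\alpha(T_0)}t^{k-1}\prod_{i=1}^{m}\bigl(1+tq^{k+i-1}\bigr)=q^{\alpha(T_0)}t^{k-1}\prod_{j=k}^{n-k}(1+tq^j),
\]
using $k+m-1=n-k$. Summing over $T_0\in\B^*_{n,k}$ and then over $k$ yields (\ref{eq-expansion}), with $a_{n,k}(q)=\sum_{T\in\B^*_{n,k}}q^{\alpha(T)}$.

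The main obstacle is the bookkeeping in the orbit step: showing that $\maj$ changes additively under commuting flips. This needs $\Sint(T)$, its increasing indexing and $\leaf(T)$ to stay fixed under every $\theta_x$. That holds because $\theta_x$ changes only whether $x$'s single child is a left or a right child, never the tree shape otherwise. Once this is established, the rest is the routine product expansion above.
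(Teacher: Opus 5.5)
Your proposal is correct and follows essentially the same route as the paper: decompose $\B_n$ into GFS-orbits, take the unique representative with no left internal nodes, use the invariance of $\alpha$, $\leaf$ and $\Sint$ together with Lemma~\ref{lem-tree} to show that each orbit contributes $q^{\alpha(T_0)}t^{k-1}\prod_{j=k}^{n-k}(1+tq^j)$, and then sum over $k$. Computing $\maj(\theta_S(T_0))$ directly from the definition of $\beta$ is just an unpacked form of the paper's appeal to Lemma~\ref{lem-theta}.
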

	 	\begin{proof}
	 			For each $T\in\B_n$, we define $[T]$ to be the set of trees $S$ that can be transformed
	 		to $T$ via the GFS-action on  increasing binary trees, which is called the  orbit of $T$.  Clearly, the GFS-action
	 		divides $\B_n$ into disjoint orbits. In view of Lemma~\ref{lem-theta}, each $[T]$ contains exactly one
	 		tree without any left  internal nodes that we denote by $\bar{T}$.  By (\ref{eq-sint}) and (\ref{eq-left}), we have
	 		\begin{equation}\label{eq-bar-T-1}
	 			\sint(\bar{T})=n-2\leaf(\bar{T})+1 
	 		\end{equation}
	 		and 
	 		\begin{equation}\label{eq-bar-T-2}
	 			\leaf(\bar{T})=\l(\bar{T})+1.
	 		\end{equation}
	 		
	 		Then, by Lemma~\ref{lem-theta}, we have
	 		\[
	 		\begin{array}{lll}
	 			\sum\limits_{T\in [\bar{T}]} t^{\l(T)}q^{\maj(T)}
	 			&= & t^{\l(\bar{T})}q^{\maj(\bar{T})}\prod\limits_{j=1}^{n-2\leaf(\bar{T})+1}(1+tq^{\leaf(\bar{T})+j-1})\\
	 			&&\\
	 			&=  &t^{\leaf(\bar{T})-1}q^{\maj(\bar{T})}\prod\limits_{j=\leaf(\bar{T})}^{n-\leaf(\bar{T})}(1+tq^j)\\
	 			&&\\
	 			&=  &t^{\leaf(\bar{T})-1}q^{\alpha(\bar{T})}\prod\limits_{j=\leaf(\bar{T})}^{n-\leaf(\bar{T})}(1+tq^j)
	 		\end{array}
	 		\]
	 		where the first equality follows from (\ref{eq-bar-T-1}), the second equality follows from (\ref{eq-bar-T-2}) and the last equality follows from the relation $\maj(\bar{T})=\alpha(\bar{T})$.  Recall that $\B^*_{n,k}$ denotes the set of increasing binary trees without any left  internal nodes and with exactly $k$ leaves.
	 		Then, we derive that
	 		\[
	 		\begin{aligned}
	 			\sum\limits_{T\in \B_n}t^{\l(T)}q^{\maj(T)}
	 			&= \sum_{k\geq 1}\sum_{\bar{T}\in\B^*_{n,k} }\sum_{T\in [\bar{T}]} t^{\l(T)}q^{\maj(T)}\\
	 			&= \sum_{k\geq 1}\sum_{\bar{T}\in\B^*_{n,k} }t^{k-1}q^{\alpha(\bar{T})}\prod\limits_{j=k}^{n-k}(1+tq^j)\\
	 			&=  \sum_{k\geq 1} a_{n,k}(q)t^{k-1}\prod\limits_{j=k}^{n-k}(1+tq^j),
	 		\end{aligned}
	 		\]
	 		completing the proof. 
	 		
	 		\end{proof}

	 	\section{Bijection between permutations and increasing binary trees}
	 	In this section, we establish  a bijection between   permutations and increasing
	 	binary trees, thereby proving the first equality  stated in (\ref{eq-gamma-1}).
	 	
	 	 \begin{theorem}\label{thm-Phi}
	 	 	For $n\geq 1$, there exists a bijection $\Phi: \mathfrak{S}_n\rightarrow \B_n$ such that for any permutation $\pi\in \mathfrak{S}_n$, we have
	 	 	$$
	 	 	(\des, \maj)\pi= (\l, \maj)\Phi(\pi).
	 	 	$$
	 	 	\end{theorem}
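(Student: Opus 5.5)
The plan is to construct $\Phi$ recursively, mirroring Carlitz's insertion proof of the recurrence (\ref{eq:qEulerA-rec}). On the permutation side, take $\pi\in\mathfrak{S}_{n-1}$ with $\des(\pi)=d$ and insert $n$ into one of its $n$ slots. The $d+1$ slots that lie at the end or inside a descent leave $\des$ unchanged. Ordering these slots from right to left, their increments of $\maj$ are exactly $0,1,\ldots,d$. The remaining $n-1-d$ slots (the front and the ascent gaps) raise $\des$ by one. Ordering them from left to right, they raise $\maj$ by exactly $d+1,d+2,\ldots,n-1$. Hence the $n$ insertions of $n$ into $\pi$ produce the pairs
\[
(\Delta\des,\Delta\maj)\in\{(0,j)\mid 0\le j\le d\}\cup\{(1,j)\mid d+1\le j\le n-1\},
\]
each exactly once.

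The tree-side analogue starts from $T\in\B_{n-1}$ with $\leaf(T)=k$ and $\sint(T)=m$. By (\ref{eq-sint}), $m=n-2k$, so node $n$ can be attached in exactly $2k+m=n$ ways: as the left or right child of one of the $k$ leaves, or as the missing child of one of the $m$ singleton internal nodes. Every tree of $\B_n$ arises exactly once from a unique $T=\widehat{T'}$ and a unique attachment. The core step is to show that, when $\l(T)=d$, the pairs $(\Delta\l,\Delta\maj)$ over these $n$ attachments are again precisely $\{(0,j)\mid 0\le j\le d\}\cup\{(1,j)\mid d+1\le j\le n-1\}$. Given this, I define $\Phi$ inductively: $\Phi(1)$ is the one-node tree, and if $\pi'$ is obtained from $\pi$ by the insertion with label $(\epsilon,j)$, then $\Phi(\pi')$ is obtained from $\Phi(\pi)$ by the attachment with the same label $(\epsilon,j)$. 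Bijectivity follows by induction, since both sides decompose uniquely as (object of size $n-1$, insertion position). The equality $(\des,\maj)\pi=(\l,\maj)\Phi(\pi)$ follows because the increments agree at every step.

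To verify the multiset, I split $\maj(T')-\maj(T)$ into the change of $\alpha$, which the recursive definition of $\alpha$ gives directly, and the change of the correction term $\sum_i(\leaf+i-1)\beta_i$.
\begin{itemize}
\item For attachment to a singleton node $u_i$: the number of leaves is unchanged and $u_i$ leaves $\Sint$. The nodes $u_{i+1},\ldots,u_m$ shift index down by one, which lowers the correction by the number $L_{>i}$ of left internal nodes among them. If $u_i$ was a left internal node, it also loses its own correction term $k+i-1$. The change of $\l$ is $0$ or $1$ according to the side of the attachment, and in case $u_i$ is a left internal node the attachment is on the right, so $\Delta\l=0$ there.
\item For attachment to a leaf $\ell_i$: if the attachment is a right child, $\ell_i$ becomes a right internal node and $\Delta\l=0$. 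If it is a left child, $\ell_i$ becomes a left internal node and $\Delta\l=1$. In both cases $\ell_i$ enters $\Sint$ at some rank, which shifts the indices of the larger singleton nodes and hence the correction term.
\end{itemize}
Using (\ref{eq-left}), namely $d=\lint(T)+k-1$, I expect these contributions to telescope. The hoped-for outcome is that the $\Delta\l=0$ attachments (right children of leaves and completions of left internal nodes) realize $0,\ldots,d$, and the $\Delta\l=1$ attachments (left children of leaves and completions of right internal nodes) realize $d+1,\ldots,n-1$.

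The main obstacle is this bookkeeping for leaf attachments: the new singleton node $\ell_i$ is inserted into $\Sint$ at a rank that depends on how $\ell_i$ compares with the existing singleton nodes, and the shift it causes must cancel exactly against the $\alpha$-increment $i-1$ so that the values fill an interval without gaps or repeats. If a direct verification proves unwieldy, my fallback is weaker but still sufficient for the statement. I would prove only that $\sum_{T\in\B_n,\ \l(T)=k-1}q^{\maj(T)}$ satisfies (\ref{eq:qEulerA-rec}), by summing the increments over each class, and then build $\Phi$ abstractly by an order-preserving matching of insertion slots with attachments carrying the same $(\Delta,\,\cdot\,)$-labels.
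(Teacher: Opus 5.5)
Your completion cases are correct and coincide with cases (iii) and (iv) of the paper's map $\psi$. They realize $\Delta\maj=k,\dots,d$ with $\Delta\l=0$, and $d+1,\dots,n-k-1$ with $\Delta\l=1$. The leaf attachments, however, do not telescope: your claimed multiset identity is false, so there is a genuine gap.

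Carrying out your own bookkeeping, attaching $n$ as the right child of $\ell_i$ gives $\Delta\maj=i-1+L(\ell_i)$. Attaching it as the left child gives $\Delta\maj=n-k+i-1-R(\ell_i)$. Here $L(\ell_i)$ and $R(\ell_i)$ count the left and right internal nodes of $T$ that are larger than $\ell_i$. These values fill $\{0,\dots,k-1\}$, resp. $\{n-k,\dots,n-1\}$, only if no left, resp. right, internal node of $T$ exceeds $\ell_1$.

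For a concrete failure, let $T\in\B_4$ have root $1$ with left child $2$ and right child $3$, and let $4$ be the left child of $3$. Then $\alpha(T)=3$, $\beta(T)=(1)$, $\maj(T)=5$ and $\l(T)=2$.
\begin{itemize}
\item Attaching $5$ as the right child of $2$ gives $\alpha=3$, $\beta=(0,1)$, $\maj=6$.
\item Attaching $5$ as the right child of $4$ gives $\alpha=4$, $\beta=(1,0)$, $\maj=6$.
\item Completing node $3$ gives $\maj=7$.
\end{itemize}
So the $\Delta\l=0$ increments are $1,1,2$ instead of $0,1,2$. The label $(0,0)$ is never realized and $(0,1)$ is realized twice, so your inductive definition of $\Phi$ is not well defined. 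Your fallback fails for the same reason, because it also sums increments tree by tree, and here $\sum_{\Delta\l=0}q^{\Delta\maj}=2q+q^2\neq[3]_q$.

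What is missing is the renormalization step in the paper's cases (i) and (ii). After attaching $n$ to the leaf $\ell_i$, the child types of \emph{all} singleton internal nodes are reassigned so that $\beta$ of the new tree equals $\beta(T)$ with one entry appended at the end: $0$ for a right attachment, $1$ for a left one. By Lemma~\ref{lem-alpha} this reassignment does not change $\alpha$, so $\alpha$ still increases by $i-1$. Meanwhile the correction term increases by exactly $0$, respectively $\leaf(T)+\sint(T)=n-k$. This yields the increments $0,\dots,k-1$ with $\Delta\l=0$ and $n-k,\dots,n-1$ with $\Delta\l=1$, exactly closing both gaps.

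The price is that the tree-side recursion is no longer plain deletion of the largest node. The inverse deletes $n$ and, when its parent is a singleton internal node, renormalizes the orientations by dropping the last entry of $\beta$.

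Alternatively, the equidistribution, though not your step-by-step labelling, could be recovered by summing your plain attachments over whole orbits of the flips $\theta_x$. Over such an orbit both procedures produce the same set of trees.
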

	 	
	 	\subsection{The Carlitz's  insertion  bijection  for permutations}
	 	 In this subsection, we give a review of Carlitz's insertion bijection \cite{Carlitz}.   Given $\pi\in \mathfrak{S}_{n-1}$, let 
	 	$$
	 	S(\pi)=\{\pi_{i}\mid \pi_{i-1}>\pi_i, \,\, 2\leq i\leq n-1\} 
	 	$$
	 	Clearly, we have $|S(\pi)|=\des(\pi)$. The {\em $\maj$-labeling} of $\pi$  is obtained as follows.
	 	\begin{itemize}
	 		\item Label the  space after $\pi_{n-1}$ by $0$.
	 		\item Label the spaces before those letters of $\pi$ that are belonging to $S(\pi)$ from right to left with $1,2,\ldots, \des(\pi)$. 
	 		\item Label the remaining spaces from left to right with $\des(\pi)+1,  \ldots, n-1$.
	 	\end{itemize}

	 	For example, if we   let    $\pi=473598216$,  then we have   $S(\pi)=\{1,2,3, 8 \}$,  and thus  the $\maj$-labeling of $\pi$ is given by
	 	$$
	 	\begin{array} {llllllllll}
	 		_{ 5}4&_67&_{\red 4} 3&_75&_89&_{\red3}8&_{\red2}2&_{\red 1}1&_96&_0,
	 	\end{array}
	 	$$
	 	where the labels of the spaces are written as subscripts. 
	 	Define $$\phi:\mathfrak{S}_{n-1}\times  \{0, 1, \ldots, n-1\} \longrightarrow \mathfrak{S}_{n}$$ by mapping the pair $( \pi, c)$ to the  permutation in $\mathfrak{S}_{n}$  obtained by inserting   $ n$ at the space in $\pi$ which  is labeled by $c$ in the $\mathrm{maj}$-labeling of $\pi$. Continuing with our running example, we have
	 	$\phi(473598216, 8)= 4735(10)98216$.

	 	Carlitz \cite{Carlitz}  proved that the map $\phi$  verifies  the following celebrated  properties.
	 		\begin{theorem}[Carlitz~ \cite{Carlitz}] \label{thm-Car}
	 	Fix $n\geq 2$. 	The map $\phi\colon \mathfrak{S}_{n-1}\times \{0,1,\dots,n-1\}\rightarrow  \mathfrak{S}_{n}$ is a bijection such that
	 		for  any permutation $\pi\in \mathfrak{S}_{n-1}$ and $0\le c\le n-1$, we have
	 		\begin{equation}\label{eq-phi-1}
	 		\des(\phi(\pi, c))=\left\{ 
	 		 \begin{array}{ll}
	 			\des(\pi) & \text{if } 0\le c\le \des(\pi),\\[2pt]
	 			\des(\pi)+1 & \text{otherwise},
	 	\end{array}
	 	\right.
	 		\end{equation}
	 		and
	 		\begin{equation}\label{eq-phi-2}
	 		\maj(\phi(\pi, c))=\maj(\pi)+c.
	 		\end{equation}
	 	\end{theorem}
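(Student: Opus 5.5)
We have to prove Carlitz's theorem: the map $\phi$ is a bijection, and it changes $\des$ and $\maj$ as stated in (\ref{eq-phi-1}) and (\ref{eq-phi-2}). Bijectivity is the easy part. For fixed $\pi\in\mathfrak{S}_{n-1}$ there are exactly $n$ spaces. The $\maj$-labeling assigns to them the labels $0,1,\dots,n-1$, each used exactly once: $0$ goes to the final space, $1,\dots,\des(\pi)$ go to the $\des(\pi)$ spaces sitting before descent bottoms, and $\des(\pi)+1,\dots,n-1$ go to the remaining $n-1-\des(\pi)$ spaces. Hence $c\mapsto$ ``insert $n$ at the space labeled $c$'' is a bijection onto the permutations of $[n]$ whose restriction to $[n-1]$ is $\pi$. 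Conversely, every $\sigma\in\mathfrak{S}_n$ arises from exactly one $\pi$, namely $\sigma$ with $n$ deleted. So $\phi$ is a bijection, and the whole content lies in the two statistic identities.

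For $\des$ we argue locally. Inserting $n$ at the end, or between $\pi_{i-1}>\pi_i$, creates no new descent: at the end nothing follows $n$, and in the middle the descent at $\pi_{i-1}\pi_i$ is replaced by the single descent $n\,\pi_i$. Inserting $n$ at the front, or between $\pi_{i-1}<\pi_i$, adds exactly one descent. This matches the labeling, since labels $0,\dots,\des(\pi)$ are exactly the end space and the spaces before descent bottoms.

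For $\maj$ we track how the descent positions shift. Insert $n$ at position $j$, so that $n$ becomes the $j$-th letter; we treat the cases by the type of space.

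\textbf{Case 1.} Suppose the space lies between $\pi_{j-1}>\pi_j$. The descent formerly at $j-1$ moves to $j$, and every descent of $\pi$ to the right of the space shifts up by $1$. The net increase is $1+\#\{\text{descents of }\pi\text{ at positions}\geq j\}$. This equals the number of descent-bottom spaces at or to the right of this one, which is exactly the right-to-left label.

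\textbf{Case 2.} Suppose the space is an ascent space or the front space. Then a new descent appears at $j$, and all descents to the right of the space shift by $1$. The increase is $j+\#\{\text{descents at positions}\geq j\}$. We must check that this equals $\des(\pi)+1+\#\{\text{non-descent-bottom spaces strictly left of this one}\}$. Among the first $j-1$ spaces, $\des(\pi)-d$ are descent-bottom spaces, where $d=\#\{\text{descents at }\geq j\}$. Hence the number of non-descent spaces to the left is $(j-1)-(\des(\pi)-d)$, and the left-to-right label is $\des(\pi)+1+(j-1)-\des(\pi)+d=j+d$, as required.

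\textbf{Case 3.} The end space adds $0$, matching its label $0$.

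The main obstacle is the bookkeeping in Case 2. Labels are assigned right to left for descent-bottom spaces but left to right for the other spaces, so the count must carefully separate descent-bottom spaces lying left versus right of the insertion point. Cases 1 and 3 are immediate.
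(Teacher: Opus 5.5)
Your proof is correct and complete. The paper itself gives no proof of this statement: it quotes the result from Carlitz and uses it as a black box in the recursive construction of $\Phi$. There is therefore nothing in the paper to compare your argument against beyond the citation.

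What you give is the standard direct verification, and every step checks. Inserting $n$ as the $j$-th letter has the following effect on the descents of $\pi$:
\begin{itemize}
\item descents at positions $\le j-2$ stay in place;
\item a descent at $j-1$, if there is one, is destroyed;
\item descents at positions $\ge j$ shift up by one;
\item a new descent appears at $j$ whenever $j\le n-1$.
\end{itemize}
This gives the $\des$ rule, and it gives the increments $1+d$ (Case~1), $j+d$ (Case~2) and $0$ (Case~3), which you correctly match with the labels.

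The one point you leave implicit is in Case~2. The descent-bottom spaces among the first $j-1$ spaces correspond to descents of $\pi$ at positions $\le j-2$. Their number equals $\des(\pi)-d$ only because position $j-1$ is not a descent in that case (it is an ascent, or $j=1$). You should say this explicitly.

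Including your argument would make the paper self-contained. Your Case~2 count is also the permutation analogue of the bookkeeping the paper does for trees in Case~4 of Lemma~\ref{lem-psi-1} (equations \eqref{eq-proof-1}--\eqref{eq-proof-4}), so it would make the parallel between $\phi$ and $\psi$ visible.
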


	 \subsection{The Carlitz-type insertion bijection for trees }
	 In this subsection, we will develop a Carlitz-type insertion bijection for  increasing binary trees which maps a  pair $(T,c)$ with $T\in \B_{n-1}$ and $c\in \{0,1,\ldots, n-1\}$ to an increasing binary tree in $\B_n$.

	 \noindent{\bf The map $\psi: \B_{n-1}\times \{0,1, \ldots, n-1\}\rightarrow \B_n$.}
	 
	 	Given $T\in \B_{n-1}$, let 
	 	$$
	 	\Leaf(T)=\{\ell_1<\ell_2<\cdots<\ell_k\}
	 	$$
	  and
	 	$$\Sint(T)=\{u_1<u_2<\cdots< u_{n-2k}\}.$$  Assume that $T$ has exactly $s$ left  internal nodes. Let 
	 	$$
	 	\Lint(T)=\{u_{i_1}<u_{i_2}<\cdots<u_{i_s}\} 
	 	$$
	 	and 
	 	$$
	 	\Rint(T)=\{u_{j_1}>u_{j_2}>\cdots>u_{j_t}\}
	 	$$
	 	denote the set of  left  internal nodes  of $T$ and the set of right   internal nodes of $T$, respectively.   Clearly, we have $t=n-2k-s$.

	 	 Let $\beta(T)=(\beta_1, \beta_2, \ldots, \beta_{n-2k})$.
	 Given $c\in \{0,1,\dots,n-1\}$, define $Q=\psi(T,c)$ as the tree constructed according to the following four cases.
	 
	 	\noindent \textbf{Case $\mathrm{(i)}$:} $0\le c\le k-1$.\\
	 	Connect node $\ell_{c+1}$ and node $n$ by a right edge, and denote the resulting tree by $P$.
	 	Construct $Q$ from $P$ by adjusting   the child-type (left/right)   of each singleton internal node of $P$ so that $\beta(Q)=(\beta_1, \beta_2, \ldots, \beta_{n-2k}, 0)$.  See Figure \ref{fig:case1-psi} for an illustration,  where $n=7$, $k=2$, $\Leaf(T)=\{\ell_1, \ell_2\}=\{2,6\}$ and $c=0$.

	 	\noindent \textbf{Case $\mathrm{(ii)}$:} $n-k\le c\le n-1$.\\
	 	Let $r=c-n+k+1$.
	 	Connect node $\ell_{r}$ and node $n$ by a left edge, and denote the resulting tree by $P$.
	 Construct $Q$ from $P$ by adjusting the child-type (left/right)   of each singleton internal node of $P$ so that $\beta(Q)=(\beta_1, \beta_2, \ldots, \beta_{n-2k}, 1)$. See Figure \ref{fig:case2-psi} for an illustration,  where $n=7$, $k=2$,  $\Leaf(T)=\{\ell_1, \ell_2\}=\{2,6\}$, $c=5$ and $r=1$.

	 	\noindent \textbf{Case $\mathrm{(iii)}$:} $k\le c\le k-1+s$.\\
	 	Let $r=c-k+1$. Define $Q$ to be the tree obtained from $T$ by connecting node $u_{i_{r}}$ and node $n$ via a right edge.  See Figure \ref{fig:case3-psi} for an illustration,  where $n=7$, $k=2$, $s=1$,  $\Lint(T)=\{ 3\}$, $c=2$ and $r=1$. 
	 	
	 	\noindent \textbf{Case $\mathrm{(iv)}$:} $k+s\le c\le n-k-1$.\\
	 	Let $r=c-k-s+1$. Define $Q$ to be the tree obtained from $T$ by connecting node $u_{j_{r}}$ and node $n$ via a left edge.  See Figure \ref{fig:case4-psi} for an illustration,  where $n=7$, $k=2$, $s=1$,  $\Rint(T)=\{ u_{j_1}, u_{j_2}\}=\{5,4\}$, $c=3$ and $r=1$.

	 		\tikzset{
	 		v/.style={circle,fill,inner sep=1.05pt},
	 		arr/.style={->,>=stealth,thin}
	 	}
	 	
	 		\begin{figure}[htbp]
	 		\centering
	 		\small
	 		\vspace{-2mm}

	 		\begin{tikzpicture}[scale=.56]
	 			
	 			\node[v,label=above:$1$] (a1) at (0,0) {};
	 			\node[v,label=left:$2$]  (a2) at (-1,-1) {};
	 			\node[v,label=right:$3$] (a3) at (1,-1) {};
	 			\node[v,label=left:$4$]  (a4) at (.4,-2) {};
	 			\node[v,label=right:$5$] (a5) at (1,-3) {};
	 			\node[v,label=right:$6$] (a6) at (1.6,-4) {};
	 			\draw (a1)--(a2) (a1)--(a3)
	 			(a3)--(a4) (a4)--(a5) (a5)--(a6);
	 			\node at (.3,-4.8) {$T$};
	 			
	 			\draw[arr] (2.15,-2) -- (3.05,-2);
	 			
	 			\node[v,label=above:$1$] (b1) at (4.6,0) {};
	 			\node[v,label=left:$2$]  (b2) at (3.6,-1) {};
	 			\node[v,label=right:$3$] (b3) at (5.6,-1) {};
	 			\node[v,label=below:$7$] (b7) at (4.2,-2.2) {};
	 			\node[v,label=left:$4$]  (b4) at (5,-1.8) {};
	 			\node[v,label=right:$5$] (b5) at (5.6,-3) {};
	 			\node[v,label=right:$6$] (b6) at (6.2,-4) {};
	 			\draw (b1)--(b2) (b1)--(b3)
	 			(b2)--(b7) (b3)--(b4)
	 			(b4)--(b5) (b5)--(b6);
	 			\node at (4.9,-4.8) {$P$};
	 			
	 			\draw[arr] (5.75,-2) -- (6.65,-2);
	 			
	 			\node[v,label=above:$1$] (c1) at (9.2,0) {};
	 			\node[v,label=left:$2$]  (c2) at (8.2,-1) {};
	 			\node[v,label=right:$3$] (c3) at (10.2,-1) {};
	 			\node[v,label=left:$7$]  (c7) at (7.6,-2) {};
	 			\node[v,label=right:$4$] (c4) at (10.8,-2) {};
	 			\node[v,label=right:$5$] (c5) at (11.4,-3) {};
	 			\node[v,label=right:$6$] (c6) at (12,-4) {};
	 			\draw (c1)--(c2) (c1)--(c3)
	 			(c2)--(c7) (c3)--(c4)
	 			(c4)--(c5) (c5)--(c6);
	 			\node at (9.5,-4.8) {$Q$};
	 			
	 		\end{tikzpicture}
	 		
	 		\smallskip
	 		$\beta(T)=(1,0,0),\qquad \beta(Q)=(1,0,0,0).$
	 		
	 		\vspace{-1mm}
	 		\caption{An example of Case (i) of the map $\psi$.}
	 		\label{fig:case1-psi}
	 		\vspace{-2mm}
	 	\end{figure}

	 	\begin{figure}[htbp]
	 		\centering
	 		\small
	 		\vspace{-2mm}
	 		 
	 		\begin{tikzpicture}[scale=.56]
	 			
	 			\node[v,label=above:$1$] (a1) at (0,0) {};
	 			\node[v,label=left:$2$]  (a2) at (-1,-1) {};
	 			\node[v,label=right:$3$] (a3) at (1,-1) {};
	 			\node[v,label=left:$4$]  (a4) at (.4,-2) {};
	 			\node[v,label=right:$5$] (a5) at (1,-3) {};
	 			\node[v,label=right:$6$] (a6) at (1.6,-4) {};
	 			\draw (a1)--(a2) (a1)--(a3)
	 			(a3)--(a4) (a4)--(a5) (a5)--(a6);
	 			\node at (.3,-4.8) {$T$};
	 			
	 			\draw[arr] (1.05,-2) -- (1.85,-2);
	 			
	 			\node[v,label=above:$1$] (b1) at (4.6,0) {};
	 			\node[v,label=left:$2$]  (b2) at (3.6,-1) {};
	 			\node[v,label=right:$3$] (b3) at (5.6,-1) {};
	 			\node[v,label=left:$7$]  (b7) at (3,-2) {};
	 			\node[v,label=left:$4$]  (b4) at (5,-2) {};
	 			\node[v,label=right:$5$] (b5) at (5.6,-3) {};
	 			\node[v,label=right:$6$] (b6) at (6.2,-4) {};
	 			\draw (b1)--(b2) (b1)--(b3)
	 			(b2)--(b7) (b3)--(b4)
	 			(b4)--(b5) (b5)--(b6);
	 			\node at (4.6,-4.8) {$P$};
	 			
	 			\draw[arr] (5.75,-2) -- (6.65,-2);
	 			
	 			\node[v,label=above:$1$] (c1) at (9.2,0) {};
	 			\node[v,label=left:$2$]  (c2) at (8.2,-1) {};
	 			\node[v,label=right:$3$] (c3) at (10.2,-1) {};
	 			\node[v,label=left:$7$]  (c7) at (7.6,-2) {};
	 			\node[v,label=right:$4$] (c4) at (10.8,-2) {};
	 			\node[v,label=right:$5$] (c5) at (11.4,-3) {};
	 			\node[v,label=left:$6$]  (c6) at (10.8,-4) {};
	 			\draw (c1)--(c2) (c1)--(c3)
	 			(c2)--(c7) (c3)--(c4)
	 			(c4)--(c5) (c5)--(c6);
	 			\node at (9.4,-4.8) {$Q$};
	 			
	 		\end{tikzpicture}
	 		
	 		\smallskip
	 		$\beta(T)=(1,0,0),\qquad \beta(Q)=(1,0,0,1).$
	 		
	 		\vspace{-1mm}
	 		\caption{An example of Case (ii) of the map $\psi$.}
	 		\label{fig:case2-psi}
	 		\vspace{-2mm}
	 	\end{figure}

	 	\begin{figure}[htbp]
	 		\centering
	 		\small
	 		\vspace{-2mm}

	 		\begin{tikzpicture}[scale=.56]
	 			
	 			\node[v,label=above:$1$] (a1) at (0,0) {};
	 			\node[v,label=left:$2$]  (a2) at (-1,-1) {};
	 			\node[v,label=right:$3$] (a3) at (1,-1) {};
	 			\node[v,label=left:$4$]  (a4) at (.4,-2) {};
	 			\node[v,label=right:$5$] (a5) at (1,-3) {};
	 			\node[v,label=right:$6$] (a6) at (1.6,-4) {};
	 			\draw (a1)--(a2) (a1)--(a3)
	 			(a3)--(a4) (a4)--(a5) (a5)--(a6);
	 			\node at (.3,-4.8) {$T$};
	 			
	 			\draw[arr] (2.2,-2) -- (3.4,-2);
	 			
	 			\node[v,label=above:$1$] (b1) at (5.2,0) {};
	 			\node[v,label=left:$2$]  (b2) at (4.2,-1) {};
	 			\node[v,label=right:$3$] (b3) at (6.2,-1) {};
	 			\node[v,label=left:$4$]  (b4) at (5.6,-2) {};
	 			\node[v,label=right:$7$] (b7) at (7,-2) {};
	 			\node[v,label=right:$5$] (b5) at (6.2,-3) {};
	 			\node[v,label=right:$6$] (b6) at (6.8,-4) {};
	 			\draw (b1)--(b2) (b1)--(b3)
	 			(b3)--(b4) (b3)--(b7)
	 			(b4)--(b5) (b5)--(b6);
	 			\node at (5.5,-4.8) {$Q$};
	 			
	 		\end{tikzpicture}

	 		\vspace{-1mm}
	 		\caption{An example of Case (iii)  of the map $\psi$.}
	 		\label{fig:case3-psi}
	 		\vspace{-2mm}
	 	\end{figure}

	 	\begin{figure}[htbp]
	 		\centering
	 		\small
	 		\vspace{-2mm}

	 		\begin{tikzpicture}[scale=.56]
	 			
	 			\node[v,label=above:$1$] (a1) at (0,0) {};
	 			\node[v,label=left:$2$]  (a2) at (-1,-1) {};
	 			\node[v,label=right:$3$] (a3) at (1,-1) {};
	 			\node[v,label=left:$4$]  (a4) at (.4,-2) {};
	 			\node[v,label=right:$5$] (a5) at (1,-3) {};
	 			\node[v,label=right:$6$] (a6) at (1.6,-4) {};
	 			\draw (a1)--(a2) (a1)--(a3)
	 			(a3)--(a4) (a4)--(a5) (a5)--(a6);
	 			\node at (.3,-4.8) {$T$};
	 			
	 			\draw[arr] (2.2,-2) -- (3.4,-2);
	 			
	 			\node[v,label=above:$1$] (b1) at (5.2,0) {};
	 			\node[v,label=left:$2$]  (b2) at (4.2,-1) {};
	 			\node[v,label=right:$3$] (b3) at (6.2,-1) {};
	 			\node[v,label=left:$4$]  (b4) at (5.6,-2) {};
	 			\node[v,label=right:$5$] (b5) at (6.2,-3) {};
	 			\node[v,label=left:$7$]  (b7) at (5.5,-4) {};
	 			\node[v,label=right:$6$] (b6) at (6.9,-4) {};
	 			\draw (b1)--(b2) (b1)--(b3)
	 			(b3)--(b4) (b4)--(b5)
	 			(b5)--(b7) (b5)--(b6);
	 			\node at (5.5,-4.8) {$Q$};
	 			
	 		\end{tikzpicture}

	 		\vspace{-1mm}
	 		\caption{An example of Case (iv)  of the map  $\psi$.}
	 		\label{fig:case4-psi}
	 		\vspace{-2mm}
	 	\end{figure}
	 	
	 	\begin{lemma}\label{lem-psi-1}
	 		
	 	Fix $n\geq 2$. 	For  any tree $T\in \B_{n-1}$ and $0\le c\le n-1$, we have
	 		\[
	 		\l(\psi(T, c))=\left\{ 
	 		\begin{array}{ll}
	 			\l(T) & \mbox{if  $0\leq c\leq \l(T)$},\\[2pt]
	 			\l(T)+1 & \mbox{otherwise},
	 		\end{array}
	 		\right.
	 		\]
	 		and
	 		\[
	 		\maj(\psi(T, c))=\maj(T)+c.
	 		\]
	 		\end{lemma}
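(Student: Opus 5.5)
The plan is to verify the two claims case by case, following the four cases in the definition of $\psi$. Throughout, write $k=\leaf(T)$ and $s=\lint(T)$. By (\ref{eq-left}) we have $\l(T)=s+k-1$, so the threshold $\l(T)$ in the statement equals $k-1+s$. Cases (i) and (iii) cover exactly $0\le c\le k-1+s$, and cases (ii) and (iv) cover $k+s\le c\le n-1$. The bookkeeping tools are the recursive definition of $\alpha$, applied with node $n$ removed, together with the formula $\maj(Q)=\alpha(Q)+\sum_i(\leaf(Q)+i-1)\beta_i(Q)$. Writing $\mathrm{corr}(T)=\sum_{i}(\leaf(T)+i-1)\beta_i$ for the correction term, each case reduces to computing $\alpha(Q)-\alpha(T)$ and $\mathrm{corr}(Q)-\mathrm{corr}(T)$.

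\textbf{The left-edge count.} In case (i) the new node $n$ is a right child of a former leaf. That leaf becomes a singleton internal node and is forced to be a right internal node. The other singleton internal nodes keep their $\beta$-values, so $\lint(Q)=s$ and $\leaf(Q)=k$, and by (\ref{eq-left}) $\l(Q)=\l(T)$. Case (ii) is identical except that the former leaf is forced to be a left internal node, so $\lint(Q)=s+1$ and $\l$ increases by one. In case (iii) a left internal node $u_{i_r}$ gains a right child, so it stops being singleton. Then $\leaf(Q)=k+1$ and $\lint(Q)=s-1$, so $\l$ is unchanged. In case (iv) a right internal node gains a left child, giving $\leaf(Q)=k+1$ and $\lint(Q)=s$, so $\l$ increases by one.

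\textbf{The major index.} Since $\hat Q$ is $T$ up to child types of singleton nodes, and $\alpha$ is invariant under such changes by Lemma~\ref{lem-alpha}, we get $\alpha(\hat Q)=\alpha(T)$.

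In case (i), node $n$ is a child of the leaf $\ell_{c+1}$, so $\alpha(Q)=\alpha(T)+c$. The new singleton node $\ell_{c+1}$ may sit in the middle of the ordered list $\Sint$, which shifts the indices $i$ in $\mathrm{corr}$. The resulting $\beta$-sequence of $Q$ has to be read as the old $\beta$ with a $0$ inserted at the position of $\ell_{c+1}$; I take this to be the intended meaning of writing $\beta(Q)=(\beta_1,\ldots,\beta_{n-2k},0)$ as a multiset of assignments. The inserted entry is $0$, but every left internal node above it in the order moves up one index, so $\mathrm{corr}$ seems to change. A naive reading therefore gives an error term, and that suggests $\alpha$ is designed to compensate. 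I would instead check directly whether the convention is that $\beta(Q)$ lists the old nodes in their old order with the new node appended last. If so, $\mathrm{corr}(Q)$ is computed with that indexing, and $\maj(Q)=\alpha(T)+c+\mathrm{corr}(T)=\maj(T)+c$. Determining this convention precisely is the first thing I would pin down.

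In case (ii), node $n$ is again a child of a leaf, $\ell_r$, so $\alpha(Q)=\alpha(T)+r-1$. The appended entry $\beta=1$ contributes $k+(n-2k+1)-1=n-k$. The total increase is $r-1+n-k=c$.

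In case (iii), node $n$ is the right child of the singleton $u_{i_r}$ in $\hat Q$, where $\hat Q$ has $n-2k$ singleton nodes. The third rule of the definition of $\alpha$ gives $\alpha(Q)=\alpha(T)+2k+i_r-1$. On the correction side, $\leaf$ grows by one. The node $u_{i_r}$ leaves $\Sint$, which removes its term $k+i_r-1$. Every later singleton index drops by one, and this cancels the leaf increase for those nodes. Each of the $r-1$ earlier left internal nodes gains exactly $1$. Hence $\mathrm{corr}(Q)=\mathrm{corr}(T)-(k+i_r-1)+(r-1)$, and the total change is $k+r-1=c$.

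In case (iv), node $n$ is the left child of $u_{j_r}$, so the second rule gives $\alpha(Q)=\alpha(T)+k+(n-2k)-j_r$. In $\mathrm{corr}$, only left internal nodes carry weight; $u_{j_r}$ is right, so removing it contributes nothing directly. Left internal nodes below $u_{j_r}$ gain $1$ each, and those above are unchanged. Let $a$ be the number of left internal nodes below $u_{j_r}$. The total change is $n-k-j_r+a$. Because the right internal nodes are listed in decreasing order, the number of singleton nodes above $u_{j_r}$ is $n-2k-j_r$, and these split into $s-a$ left nodes and $r-1$ right nodes. Substituting $j_r=n-2k-(s-a)-(r-1)$ gives a total of $k+s+r-1=c$.

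\textbf{Main obstacle.} The delicate point is case (i) (and case (ii) similarly): the index-shift effect when the new singleton node is inserted into the ordered list $\Sint$. In cases (iii) and (iv) the analogous shifts are absorbed by the leaf count, and I expect case (i) to balance because the inserted entry is $0$. If the shift does not cancel, I would re-examine the interpretation of ``adjusting child types so that $\beta(Q)=\ldots$'' until the count matches.
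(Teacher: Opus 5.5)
\textbf{Verdict: genuine gap in cases (i) and (ii).} Your treatment of cases (iii) and (iv) is correct and is essentially the paper's computation. Your split of the $n-2k-j_r$ singleton nodes above $u_{j_r}$ into $s-a$ left and $r-1$ right ones is exactly \eqref{eq-proof-1}--\eqref{eq-proof-4}.

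The gap is in cases (i) and (ii). You leave case (i) unresolved. The reading you actually use in your left-edge paragraph is that $\ell_{c+1}$ becomes a right internal node and every other singleton node keeps its type. That is not the construction, and it does not give the lemma. Take the tree $T$ of Figure~\ref{fig:case1-psi} with $c=0$, so $\maj(T)=5+2=7$. Your reading produces $\Sint=\{2,3,4,5\}$ with $\beta=(0,1,0,0)$, hence $\maj=5+3=8=\maj(T)+1\neq \maj(T)+c$. So the inserted $0$ does not make the index shift cancel, contrary to what you expect. Your alternative reading (``old nodes in their old order with the new node appended last'') yields the right arithmetic. As stated, though, it is not a convention you can choose: $\maj(Q)$ is always computed with $\beta(Q)$ indexed by $\Sint(Q)$ in increasing order.

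\textbf{The missing idea: the prescription is positional.} The rule $\beta(Q)=(\beta_1,\ldots,\beta_{n-2k},0)$ describes the vector indexed by $\Sint(Q)$ in increasing order. After attaching $n$, every singleton internal node of $P$ is re-typed so that:
\begin{itemize}
\item the $j$-th smallest element of $\Sint(Q)$ is a left internal node if and only if $\beta_j=1$ (for $j\le n-2k$);
\item the largest element of $\Sint(Q)$ is a right internal node.
\end{itemize}
This generally changes the types of old nodes. In Figure~\ref{fig:case1-psi}, node $2$ becomes left and node $3$ becomes right.

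\textbf{How this closes cases (i) and (ii).} Passing from $P$ to $Q$ is a composition of maps $\theta_x$, so $\alpha(Q)=\alpha(P)=\alpha(T)+c$ by Lemma~\ref{lem-alpha}. Since $\leaf(Q)=k$, the correction term is literally $\sum_{i=1}^{n-2k}(k+i-1)\beta_i=\mathrm{corr}(T)$. No index shift ever has to be tracked. Likewise, $\lint(Q)$ is the number of $1$'s in the new vector: $s$ in case (i) and $s+1$ in case (ii). This is the correct justification of your left-edge count. In case (ii), the final entry $1$ sits at position $n-2k+1$ and contributes $k+(n-2k+1)-1=n-k$, exactly as you computed. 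So your case (ii) arithmetic is right, but it silently uses the positional reading that your case (i) discussion leaves in doubt.
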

	 		\begin{proof}
	 			Here we retain the notations in the definition of the map $\psi$. Let $Q=\psi(T,c)$.   We proceed  by considering  four cases.
	 			
	 			\noindent{\bf Case 1: $0\leq c\leq k-1.$ }\\
	 			By Lemma \ref{lem-tree}, we have $\l(T)=\lint(T)+\leaf(T)-1$.  Then $c\leq k-1=\leaf(T)-1$ would imply that $c\leq \l(T)$.  Now we proceed to show that 
	 			$$
	 			\maj(Q)=\maj(T)+c \,\, \mbox{and}\,\, \l(Q)=\l(T).
	 			$$
	 		 
	 			Recall that $P$ is obtained from $T$ by connecting the node $\ell_{c+1}$ and node $n$ by a right edge.  It is plain to see that $$\l(T)=\l(P),$$ $$\leaf(T)=\leaf(P),$$   and $$\alpha(P)=\alpha(T)+c.$$  
	 			Recall that $Q$ is obtained from $P$ by adjusting the child-type (left/right)   of each singleton internal node of $P$.  Moreover, we have  $\beta(Q)=(\beta_1, \beta_2, \ldots, \beta_{n-2k}, 0)$ and $\beta(T)=(\beta_1, \beta_2, \ldots, \beta_{n-2k})$.  By Lemma \ref{lem-alpha},   adjusting the child-type (left/right)   of each singleton internal node of $P$ in the procedure from $P$ to $Q$ does not affect the statistic $\alpha(P)$.   It is easily seen that the procedure from $P$ to $Q$ preserves the number of leaves and the number of left edges. This gives that  $$\alpha(Q)=\alpha(P)=\alpha(T)+c,$$
	 			 $$\leaf(Q)=\leaf(P)=\leaf(T),$$
	 			 and $$\l(Q)=\l(P)=\l(T).
	 			 $$
	 			  Hence, we deduce that
	 			 $$
	 			 \begin{array}{lll}
	 			 \maj(Q)&=&\alpha(Q)+\sum\limits_{i=1}^{n-2k+1}(\leaf(Q)+i-1)\beta_i\\
	 			 &=&\alpha(T)+c+\sum\limits_{i=1}^{n-2k}(\leaf(T)+i-1)\beta_i\\
	 			 &=& \maj(T)+c,
	 			 \end{array}
	 			 $$
	 			as desired.

	 		 	\noindent{\bf Case 2: $n-k\leq c\leq n-1.$ }\\
	 		 By Lemma \ref{lem-tree}, we have $\l(T)=\lint(T)+\leaf(T)-1\leq n-2k+k-1=n-k-1$.  Then $c\geq n-k$ would imply that $c> \l(T)$.  Now we proceed to show that 
	 		 $$
	 		 \maj(Q)=\maj(T)+c \,\, \mbox{and}\,\, \l(Q)=\l(T)+1.
	 		 $$
	 		 
	 		 Recall that $P$ is obtained from $T$ by connecting the node $\ell_r$ and node $n$ by a left edge where $r=c-n+k+1$.  It is plain to see that $$\l(P)=\l(T)+1,$$ $$\leaf(T)=\leaf(P),$$  and  $$\alpha(P)=\alpha(T)+r-1.$$  
	 		 Recall that $Q$ is obtained from $P$ by adjusting the child-type (left/right)   of each singleton internal node of $P$.  Moreover, we have  $\beta(Q)=(\beta_1, \beta_2, \ldots, \beta_{n-2k}, 1)$ and $\beta(T)=(\beta_1, \beta_2, \ldots, \beta_{n-2k})$.  By Lemma \ref{lem-alpha},   adjusting the child-type (left/right)   of each singleton internal node of $P$ in the procedure from $P$ to $Q$ does not affect the statistic $\alpha(P)$.   It is easily seen that the procedure from $P$ to $Q$ preserves the number of leaves and the number of left edges. This gives that  $$\alpha(Q)=\alpha(P)=\alpha(T)+r-1,$$
	 		 $$\leaf(Q)=\leaf(P)=\leaf(T)=k,$$
	 		 and $$\l(Q)=\l(P)=\l(T)+1.
	 		 $$
	 		 Hence, we deduce that
	 		 $$
	 		 \begin{array}{lll}
	 		 	\maj(Q)&=&\alpha(Q)+(\leaf(Q)+n-2k)+\sum\limits_{i=1}^{n-2k}(\leaf(Q)+i-1)\beta_i\\
	 		 	&=&\alpha(T)+r-1+k+n-2k+\sum\limits_{i=1}^{n-2k}(\leaf(T)+i-1)\beta_i\\
	 		 	&=& \maj(T)+c
	 		 \end{array}
	 		 $$
	 		 as desired.
	 		 
	 		 	\noindent{\bf Case 3: $k\leq c\leq k+s-1.$ }\\
	 		 By Lemma \ref{lem-tree}, we have $\l(T)=\lint(T)+\leaf(T)-1=k+s-1$.  Then $c\leq k+s-1$ would imply that $c\leq  \l(T)$.  Now we proceed to show that 
	 		 $$
	 		 \maj(Q)=\maj(T)+c \,\, \mbox{and}\,\, \l(Q)=\l(T).
	 		 $$
	 		 Recall that $Q$ is obtained from $T$ by connecting the node $u_{i_r}$ and node $n$ by a right edge where $r=c-k+1$.  It is plain to see that $$\l(Q)=\l(T),$$ 
	 		 $$\leaf(Q)=\leaf(T)+1$$ and   $$\alpha(Q)=\alpha(T)+2\leaf(T)+i_r-1.$$    Moreover, we have  $$\beta(Q)=(\beta_1, \beta_2, \ldots, \beta_{i_r-1},   \beta_{i_r+1}, \ldots,  \beta_{n-2k})$$ and $$\beta(T)=(\beta_1, \beta_2, \ldots, \beta_{i_r-1}, 1,  \beta_{i_r+1}, \ldots,  \beta_{n-2k}).$$  
	 		 Hence, we deduce that
	 		 $$
	 		 \begin{array}{lll}
	 		 	\maj(Q)&=&\alpha(Q)+\sum\limits_{j=1}^{i_r-1}(\leaf(Q)+j-1)\beta_j+\sum\limits_{j=i_r+1}^{n-2k}(\leaf(Q)+j-2)\beta_j\\
	 		 	&=&\alpha(T) +2\leaf(T)+i_r-1+\sum\limits_{j=1}^{i_r-1}(\leaf(T)+j)\beta_j+\sum\limits_{j=i_r+1}^{n-2k}(\leaf(T)-1+j)\beta_j\\
	 		 	&=&  \alpha(T) +\leaf(T)+\sum\limits_{j=1}^{i_r-1}\beta_j+\sum\limits_{j=1}^{n-2k}(\leaf(T)+j-1)\beta_j\\
	 		  
	 		 	&=&  \alpha(T) +k+r-1+\sum\limits_{j=1}^{n-2k}(\leaf(T)+j-1)\beta_j\\
	 		 	&=& \maj(T)+c 
	 		 \end{array}
	 		 $$
	 		 as desired.
	 		 
	 			\noindent{\bf Case 4: $k+s\leq c\leq n-k-1.$ }\\
	 		By Lemma \ref{lem-tree}, we have $\l(T)=\lint(T)+\leaf(T)-1=k+s-1$.  Then $c\geq k+s$ would imply that $c>  \l(T)$.  Now we proceed to show that 
	 		$$
	 		\maj(Q)=\maj(T)+c \,\, \mbox{and}\,\, \l(Q)=\l(T)+1.
	 		$$

	 		Recall that $Q$ is obtained from $T$ by connecting the node $u_{j_r}$ and node $n$ by a left edge where $r=c-k-s+1$.   Clearly, we have
	 		\begin{equation}\label{eq-proof-1}
	 			\# \{u_i\mid i<j_r, u_i\in \Lint(T)\}+\# \{u_i\mid i>j_r, u_i\in \Lint(T)\}=s, 
	 		\end{equation}
	 		\begin{equation}\label{eq-proof-2}
	 			\# \{u_i\mid i>j_r, u_i\in \Lint(T)\}+r= \# \{u_i\mid i\geq j_r, u_i\in \Sint(T)\}=n-2k-j_r+1,
	 		\end{equation}
	 		and 
	 		\begin{equation}\label{eq-proof-3}
	 			\# \{u_i\mid i<j_r, u_i\in \Lint(T)\}=\sum\limits_{i=1}^{j_r-1}\beta_i.
	 		\end{equation}
	 		Invoking (\ref{eq-proof-1})-(\ref{eq-proof-3}), we deduce that
	 		\begin{equation}\label{eq-proof-4}
	 			n-2k-j_r+\sum\limits_{i=1}^{j_r-1}\beta_i=r+s-1.
	 		\end{equation}
	 		
	 		  It is plain to see that $$\l(Q)=\l(T)+1,$$ 
	 		$$\leaf(Q)=\leaf(T)+1$$ and   $$\alpha(Q)=\alpha(T)+\leaf(T)+n-2k-j_r.$$    Moreover, we have  $$\beta(Q)=(\beta_1, \beta_2, \ldots, \beta_{j_r-1},   \beta_{j_r+1},  \ldots, \beta_{n-2k})$$ and $$\beta(T)=(\beta_1, \beta_2, \ldots, \beta_{j_r-1}, 0,  \beta_{j_r+1}, \ldots,  \beta_{n-2k}).$$  
	 		Hence, we deduce that
	 		$$
	 		\begin{array}{lll}
	 			\maj(Q)&=&\alpha(Q)+\sum\limits_{i=1}^{j_r-1}(\leaf(Q)+i-1)\beta_i+\sum\limits_{i=j_r+1}^{n-2k}(\leaf(Q)+i-2)\beta_i\\
	 			&=&\alpha(T)+\leaf(T) +n-2k-j_r+\sum\limits_{i=1}^{j_r-1}(\leaf(T)+i)\beta_i+\sum\limits_{i=j_r+1}^{n-2k}(\leaf(T)-1+i)\beta_i\\
	 			&=&  \alpha(T)+\leaf(T) +n-2k-j_r+\sum\limits_{i=1}^{j_r-1}\beta_i+\sum\limits_{i=1}^{n-2k}(\leaf(T)+i-1)\beta_i\\
	 			
	 			&=&  \alpha(T) +k+r+s-1+\sum\limits_{i=1}^{n-2k}(\leaf(T)+i-1)\beta_i\\
	 			&=& \maj(T)+c 
	 		\end{array}
	 		$$
	 		as desired, where the fourth equality follows from \eqref{eq-proof-4}.  This completes the proof.

	 			\end{proof}
	 		\begin{theorem}\label{thm-psi}
	 			Fix $n\geq 2$.  The map $\psi\colon \B_{n-1}\times \{0,1,\dots,n-1\}\rightarrow  \B_{n}$ is a bijection such that for any tree  $T\in \B_{n-1}$ and $0\le c\le n-1$, we have
	 	\begin{gather}
	 		\l(\psi(T, c))=
	 		\begin{cases}
	 			\l(T) & \text{if } 0\le c\le \l(T),\\[2pt]
	 			\l(T)+1 & \text{otherwise},
	 		\end{cases}
	 		\label{eq-psi-1}\\[4pt]
	 		\maj(\psi(T, c))=\maj(T)+c.
	 		\label{eq-psi-2}
	 	\end{gather}
	 				 
	 			\end{theorem}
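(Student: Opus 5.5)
The statistical properties (\ref{eq-psi-1}) and (\ref{eq-psi-2}) are exactly Lemma \ref{lem-psi-1}, so it only remains to show that $\psi$ is a bijection. Since $|\B_n|=n!=n\cdot|\B_{n-1}|$, the two sets have the same size, and it suffices to prove that $\psi$ is well defined and injective. Equivalently, we can construct an explicit inverse by removing node $n$ and recovering $c$.

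Well-definedness is routine. In every case node $n$ is attached to a leaf or to a singleton internal node through a free edge slot, so increasingness is preserved. In Cases (i) and (ii) the subsequent adjustment of child types is allowed because every singleton internal node can be flipped independently; this is the action $\theta_x$ of Lemma \ref{lem-theta}. In Cases (i) and (ii) the parent $\ell$ of $n$ becomes the largest-indexed singleton internal node in the sense needed for the last entry of $\beta(Q)$. One point must be checked here: when $\ell$ becomes a singleton internal node, the old singleton internal nodes keep their relative order in $\Sint(Q)$, but $\ell$ may be inserted anywhere in that list. So $\beta(Q)$ should be read as $\beta_1,\dots,\beta_{n-2k}$ on the old nodes together with the bit for $\ell$, equal to $0$ in Case (i) and $1$ in Case (ii). I interpret the definition this way, which is also how it is used in the proof of Lemma \ref{lem-psi-1}.

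For the inverse, take $Q\in\B_n$ and look at the parent $p$ of node $n$. Since $n$ is a leaf, there are two situations.

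\textbf{$p$ has two children in $Q$.} Removing $n$ makes $p$ a singleton internal node of $T=\hat Q$. If $n$ is the right child, $p$ is a left internal node of $T$, so we are in Case (iii) and $c=k-1+r$, where $p$ is the $r$-th smallest left internal node of $T$. If $n$ is the left child, $p$ is a right internal node, so we are in Case (iv) and $c$ is read off from the position of $p$ in the decreasing list $\Rint(T)$.

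\textbf{$p$ is a singleton internal node of $Q$.} We are in Case (i) or (ii). Set $\hat Q$ to be $Q$ with node $n$ removed, so $p$ becomes a leaf. Decide between Cases (i) and (ii) by the entry of $\beta(Q)$ at $p$: it is $0$ exactly when $p$ is a right internal node. Then recover $T$ from $\hat Q$ by restoring the child types of the remaining singleton internal nodes according to the other entries of $\beta(Q)$. Those entries are the $\beta_i$ of $T$, and $\Sint(T)=\Sint(Q)\setminus\{p\}$. Finally, $c=i-1$ or $c=n-k+i-1$ according to the case, where $p=\ell_i$ is the $i$-th smallest leaf of $T$.

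The inverse is well defined because the four cases produce pairwise disjoint types of $Q$. In Cases (iii) and (iv) the parent of $n$ is binary, while in Cases (i) and (ii) it is singleton, and within each pair the two cases are separated by the edge side or by the $\beta$-bit. The $c$-ranges recovered, namely $[0,k-1]$, $[k,k+s-1]$, $[k+s,n-k-1]$ and $[n-k,n-1]$, partition $\{0,\dots,n-1\}$. Composing the two maps in either order gives the identity.

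The main obstacle is making precise that in Cases (i) and (ii) the adjustment step is a bijection between $P$ and $Q$. The reason is that $\Sint(P)=\Sint(T)\cup\{\ell\}$ and each child type can be set freely, so the $\beta$-vector determines the tree once the underlying shape is fixed. This is the step where the indexing of $\beta$ must be handled carefully.
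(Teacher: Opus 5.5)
\textbf{The gap is in your reading of the adjustment step in Cases (i) and (ii).} In the paper, $\beta(Q)=(\beta_1,\ldots,\beta_{n-2k},b)$ is positional. After $n$ is attached to $\ell$, the singleton internal nodes of $P$ are listed in increasing order. The $i$-th one receives child type $\beta_i$ for $i\le n-2k$, and the largest one receives $b$ ($b=0$ in Case (i), $b=1$ in Case (ii)), wherever $\ell$ falls in that list. The proof of Lemma~\ref{lem-psi-1} uses exactly this when it writes $\maj(Q)=\alpha(Q)+\sum_{i=1}^{n-2k+1}(\leaf(Q)+i-1)\beta_i$. Figure~\ref{fig:case1-psi} shows it too: $\ell=2$ is the smallest singleton internal node of $P$, yet in $Q$ node $2$ becomes a left internal node and node $3$ a right one, giving $\beta(Q)=(1,0,0,0)$.

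Under your reading the adjustment does nothing, so $Q=P$. In Case (i), every left internal node of $T$ larger than $\ell$ then moves up one place in $\Sint(Q)$, so
\[
\maj(Q)=\maj(T)+c+\#\{v\in\Sint(T)\colon v>\ell,\ v\text{ a left internal node of }T\}.
\]
In the figure's example ($T=T_5$, $\maj(T)=7$, $c=0$) this gives $\maj(Q)=5+3=8\neq 7=\maj(T)+c$. So for the map you describe, bijectivity is trivial but \eqref{eq-psi-2} fails. You cannot cite Lemma~\ref{lem-psi-1} for it, because that lemma is proved for the positional map, not yours.

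Conversely, your inverse does not invert the paper's $\psi$. On the $Q$ of Figure~\ref{fig:case1-psi}, the bit at $p=2$ is $1$, so you would declare Case (ii). Restoring the remaining bits would also make $3$ a right internal node. In fact $Q$ came from Case (i), with $3$ a left internal node of $T$.

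The repair is to use the positional reading throughout. When the parent $p$ of $n$ is a singleton internal node of $Q$:
\begin{enumerate}
\item remove $n$;
\item decide between Cases (i) and (ii) by the \emph{last} entry of $\beta(Q)$, i.e.\ that of the largest singleton internal node, not that of $p$;
\item reset the child types of $\Sint(Q)\setminus\{p\}$, in increasing order, to the first $m-1$ entries of $\beta(Q)$;
\item read $c$ from the rank of $p$ among the leaves of $T$ (equivalently $c=\maj(Q)-\maj(T)$).
\end{enumerate}
This is exactly the paper's inverse. With it, your cardinality-plus-injectivity framework and your treatment of Cases (iii) and (iv) go through unchanged.
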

	 			\begin{proof}
	 		In view of  Lemma \ref{lem-psi-1},   the map $\psi$ maps the pair $(T,c)$ with $T\in \B_{n-1}$ and $0\leq c\leq n-1$ to a tree $Q\in \B_n$ such that the resulting tree $Q$ verifies (\ref{eq-psi-1}) and (\ref{eq-psi-2}).  
	 			By cardinality reasons,  in order to show that $\psi$ is a bijection, it suffices to show that the map $\psi$ is injective. 
	 
	 			Let $\psi(T, c)=Q$, where $T\in \B_{n-1}$ and $c\in \{0,1,\dots, n-1\}$. To prove that the map $\psi$ is injective, it suffices to show that we can recover the pair $(T,c)$ from $Q$. Let $\beta(Q)=(\beta_1, \beta_2, \ldots, \beta_m)$. We consider two cases.
	 			
	 			\noindent\textbf{Case $\mathrm{(i')}$:} If the parent of node $n$ is a singleton internal node, define $T'$ to be the tree obtained from $Q$ by the following steps:
	 			\begin{itemize}
	 				\item Remove node $n$ from $Q$ together with its incident edge;
	 				\item modify the child‑type of the remaining singleton internal nodes so that $\beta(T')=(\beta_1, \beta_2, \ldots, \beta_{m-1})$.
	 			\end{itemize}
	 			
	 			\noindent\textbf{Case $\mathrm{(ii')}$:} Otherwise, define $T'$ to be the tree obtained from $Q$ by removing node $n$ together with its incident edge.
	 			
	 			According to the definition of $\psi$, one readily verifies that the construction in Case $\mathrm{(i')}$ reverses the procedures  described in Cases $\mathrm{(i)}$ and  $\mathrm{(ii)}$ for $\psi$, while the construction in Case $\mathrm{(ii')}$ reverses the procedures  described  in Cases $\mathrm{(iii)}$ and $\mathrm{(iv)}$ for $\psi$. Consequently, applying the above procedure yields the original tree $T$, so $T'=T$. By Lemma \ref{lem-psi-1}, we then obtain $c=\maj(Q)-\maj(T)$.   Hence, every image $Q$ uniquely determines the preimage $(T,c)$, which implies that $\psi$ is injective. This completes the proof.

	 				\end{proof}
	 				
	 			\subsection{Finishing the proof of Theorem \ref{thm-Phi}}
	 				This subsection is devoted to the construction of the map $\Phi$ as stated in Theorem \ref{thm-Phi}. 
	 				
	 				\noindent{\bf The construction of the map $\Phi$.}\\
	 				Given a permutation $\pi\in \mathfrak{S}_n$,  if $n=1$, then define $\Phi(\pi)$ to be the increasing binary tree with only one node with label $1$. Otherwise, let $\phi(\sigma, c)=\pi$, where $\sigma\in\mathfrak{S}_{n-1}$ and $0\leq c\leq n-1$. Then set
	 				$\Phi(\pi)=\psi(\Phi(\sigma), c)$.

	 				\noindent{\bf Proof of Theorem \ref{thm-Phi}.} Since both $\phi$ and $\psi$ are bijections, the map $\Phi$  induces a bijection between $\mathfrak{S}_n$ and $\B_n$. 
	 				Now we proceed to show that for any $\pi\in \mathfrak{S}_n$, we have
	 				$$
	 				(\des, \maj)\pi=(\l, \maj)\Phi(\pi). 
	 				$$
	 				Here we retain the notations in the definition of the map $\Phi$.  We proceed to prove the statement by induction on $n$. It is apparent that the statement holds for $n=1$. 
	 				Now we assume that $n\geq 2$ and that for any $\tau\in \mathfrak{S}_{n-1}$, we have  $$
	 				(\des, \maj)\tau=(\l, \maj)\Phi(\tau). 
	 				$$
	 				 By (\ref{eq-phi-2})  and (\ref{eq-psi-2}), we deduce that
	 				 $$
	 				 \maj(\Phi(\pi))=\maj(\Phi(\sigma))+c=\maj(\sigma)+c=\maj(\pi),
	 				 $$
	 				where the  second equality follows from the induction hypothesis.
	 				
	 				By  (\ref{eq-phi-1})  and (\ref{eq-psi-1}), we deduce that
	 				$$
	 					\l(\Phi(\pi))=\left\{ 
	 				\begin{array}{ll}
	 					\l(\Phi(\sigma)) & \text{if } 0\le c\le \l(\Phi(\sigma)),\\[2pt]
	 					\l(\Phi(\sigma))+1 & \text{otherwise},
	 				\end{array}
	 				\right. 
	 				$$
	 				and 
	 				
	 				$$
	 				\des(\pi)=\left\{ 
	 				\begin{array}{ll}
	 					\des(\sigma)& \text{if } 0\le c\le \des(\sigma),\\[2pt]
	 					\des(\sigma)+1 & \text{otherwise}.
	 				\end{array}
	 				\right. 
	 				$$
	 	This,  combined with the relation $\des(\sigma)=\l(\Phi(\sigma))$,  yields $\des(\pi)=\l(\Phi(\pi))$ as desired, completing the proof. \qed
	 				
	 				Now we are in the position to complete the proof of Theorem \ref{thm-gamma}.
	 				
	 					\noindent{\bf Proof of Theorem \ref{thm-gamma}.} 
	 			 
	 				In view of Theorem \ref{thm-Phi}, we deduce that
	 				$$
	 				A_n(t,q)=	\sum\limits_{\pi\in \mathfrak{S}_n}t^{\des(\pi)}q^{\maj(\pi)}=	\sum\limits_{T\in \B_n}t^{\l(T)}q^{\maj(T)}.
	 				$$
	 				This,  together with (\ref{eq-expansion}),  leads to the proof of Theorem \ref{thm-gamma}. 
	 				
	 				\qed
	 			
	 				 \section{A generalized Foata–Strehl action on  increasing binary trees of type $B$}
	 				 In this section, we will introduce a group action on increasing binary trees of type $B$, thereby proving Theorem  \ref{thm-gamma-B}.

	 				 Let $$ \widetilde{\B}_{n,k}=\{T\in \widetilde{\B}_{n}\mid  \leaf(T)=k+1\}.$$
	 				 By (\ref{eq-sint}),   for any $T\in \widetilde{\B}_{n,k}$,  we have $\sint(T)=n-2k$.
	 				 Let  $T\in \widetilde{\B}_{n,k}$ with $\Sint(T)=\{u_1, u_2, \ldots, u_{n-2k}\}$, where $|u_1|<|u_2|<\ldots< |u_{n-2k}|$.  Given $x\in[n-2k]$,    
	 			define  $\tau_x(T)$ to  be the tree obtained from $T$ by  flipping the sign of node $u_x$.
	 				
	 				\begin{lemma}\label{lem-alpha-B}
	 					Let $T\in \widetilde{\B}_{n,k}$. For any $x\in [n-2k]$, we have $\alpha_B(\tau_x(T))=\alpha_B(T)$. 
	 					\end{lemma}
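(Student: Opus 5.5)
We argue by induction on $n$, in the same way as Lemma~\ref{lem-alpha}. The case $n=1$ is immediate. The trees in $\widetilde{\B}_1$ are the root $\pm1$ with right child $\infty$, and one checks directly that $\alpha_B=0$ for both signs.

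For the inductive step, assume $n\ge 2$ and put $Q=\tau_x(T)$, where $u=u_x\in\Sint(T)$ is the node whose sign is flipped. Let $T'$ and $Q'$ be the trees in $\widetilde{\B}_{n-1}$ obtained by the reduction procedure in the definition of $\alpha_B$. The key observation is that $T$ and $Q$ have the same underlying shape and the same absolute labels. Hence the same case of the reduction applies to both, and $T'$, $Q'$ have the same shape, the same leaf set in absolute value, and the same set $\Sint$ in absolute value. We split according to the position of $\pm n$.

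\textbf{Case 1: $\pm n$ is a leaf whose parent is $u$.} Here $u$ becomes a leaf of $T'$ and of $Q'$. By the reduction rule, $u$ is relabelled with the sign of $\pm n$ in both trees, so the sign of $u$ is erased and $T'=Q'$. The increment is also the same. Since $u$ is a leaf of $T'$, the increment is $2i-2$ or $2i-1$, according as $\pm n$ is the left or right child. Here $i$ is the rank of $|u|$ among the leaves, and this rank depends only on absolute values. Hence $\alpha_B(Q)=\alpha_B(T)$.

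\textbf{Case 2: $\pm n$ is the parent of $\infty$, and $\pm n=u$.} Then $T'=Q'$, since $u$ is relabelled $\infty$ and the sign is discarded. The increment $2\leaf(T')-2$ is the same for both trees, so again $\alpha_B(Q)=\alpha_B(T)$.

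\textbf{Case 3: all other configurations.} Now $u$ remains a singleton internal node of $T'$, and $Q'=\tau_{x'}(T')$ for the appropriate index $x'$. By induction, $\alpha_B(Q')=\alpha_B(T')$. The increment from $T'$ to $T$ depends on three things: the sign of $\pm n$, whether $\pm n$ is attached to a leaf, to $\infty$, or to a singleton internal node $u_i$, and, in the last situation, the sign of $u_i$. If the parent of $\pm n$ is not $u$, all of these agree for $T$ and $Q$, so the increments coincide.

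The main obstacle is the remaining situation in which $u$ is itself the parent of $\pm n$, with $\pm n$ not a leaf. This means $\pm n$ is the parent of $\infty$ and $u$ is its parent. The increment is then $2\leaf(T')-2$ in both trees, which does not involve signs. Thus the only subtle point is to confirm that whenever $\pm n$ is the child of a singleton internal node $u_i$ of $T'$, that node $u_i$ is never the flipped node $u$ in its capacity as parent of a leaf. This holds because a node whose only child is the leaf $\pm n$ becomes a leaf of $T'$, which is Case~1, not a singleton internal node of $T'$. I would verify this bookkeeping carefully, including the edge case where $u$ lies on the right arm just above $\infty$. With that settled, all increments agree and $\alpha_B(\tau_x(T))=\alpha_B(T)$ follows.
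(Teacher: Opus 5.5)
Your proof is correct and follows the paper's argument: induction on $n$, with $T'=Q'$ when $u_x$ is the parent of the leaf $\pm n$ or when $u_x=\pm n$ is the parent of $\infty$, and $Q'=\tau_{x'}(T')$ with matching increments in every other configuration. One small correction: leaves of $T'$ are ranked by signed value (convention $-n<\cdots<-1<1<\cdots<n<\infty$), not by absolute value. So what you actually need is that $T'$ and $Q'$ have identical leaves, signs included, and this holds in every case because either $T'=Q'$ or $Q'$ differs from $T'$ only at a singleton internal node.
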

	 				\begin{proof}
	 					Here we retain the notations in the definition of $\tau_x(T)$. 
	 					We prove the statement by induction on $n$. For $n=1$, both trees in
	 					$\widetilde{\mathcal T}_1$ have $\alpha_B=0$. Assume that $n\geq 2$
	 					and that the statement holds for all trees in
	 					$\widetilde{\mathcal T}_{n-1}$.

	 					Let $Q=\tau_x(T)$, and let $T'$ and $Q'$ be the trees obtained from
	 					$T$ and $Q$, respectively,  as constructed in the definition
	 					of $\alpha_B$.
	 					
	 					If $|u_x|=n$, then $u_x$ is the parent of node $\infty$, and
	 					$T'=Q'$. Since the corresponding increment is
	 					$2\operatorname{leaf}(T')-2$ in both trees, we have
	 					$\alpha_B(T)=\alpha_B(Q)$.
	 					If $u_x$ is the parent of the leaf $\pm n$,   the sign  of this 
	 					parent  is determined by the sign of $\pm n$ in both trees, so again $T'=Q'$.
	 			This implies that
	 					$\Leaf(T')=\Leaf(Q')$, and node $\pm n$
	 					has the same left/right position in $T$ and $Q$. Thus the first
	 					two cases in the definition of $\alpha_B$ give
	 					$\alpha_B(T)=\alpha_B(Q)$.
	 					
	 					In all other cases, $u_x$ remains a singleton internal node in
	 					$T'$.  Suppose that 
	 					$\Sint(T')=\{v_1, v_2, \ldots, v_m\}$  with $|v_1|<|v_2|<\cdots<|v_m|$ and that $v_y=u_x$.   Then
	 					$Q'=\tau_y(T')$, and hence the induction hypothesis gives
	 					\[
	 					\alpha_B(T')=\alpha_B(Q').
	 					\]
	 					Moreover, $T'$ and $Q'$ have the same leaves, and their singleton
	 					internal nodes have the same absolute values. If node $\pm n$ is a leaf,  then its parent is different
	 					from $u_x$, so the signs and left/right position used
	 					in the recursive increment are unchanged. If node $\pm n$
	 					is the parent of node $\infty$, the increment depends only
	 					on the number of leaves of $T'$ and $Q'$.   Consequently,
	 					\[
	 					\alpha_B(T)-\alpha_B(T')
	 					=
	 					\alpha_B(Q)-\alpha_B(Q').
	 					\]
	 					Together with $\alpha_B(T')=\alpha_B(Q')$, this yields
	 					$\alpha_B(T)=\alpha_B(Q)$, completing the proof.
	 				\end{proof}
	 				
	 				 From the definition of $\tau_x$,  one can easily check that the number of leaves  is invariant under the transformation  $\tau_x$.   This, combined with Lemma \ref{lem-alpha-B}, 
	 				 ensures that  the transformation  $\tau_x$  has the following desired properties. 
	 				 \begin{lemma}\label{lem-tau-prop}
	 				 		Let $T\in \widetilde{\B}_{n,k}$  with
	 				 		$\Sint(T)=\{u_1, u_2, \ldots, u_{n-2k}\}$  where  $|u_1|<|u_2|<\ldots< |u_{n-2k}|$. 
	 				 		 For any $x\in [n-2k]$,    we have $\tau_x(T)\in \widetilde{\B}_{n,k}$. Moreover,  the resulting tree $\tau_{x}(T)$ verifies the following properties.
	 				 	\begin{itemize}

	 				 		\item  If node $u_x$ is   negative, then we have 
	 				 		$$
	 				 		\fmaj(\tau_{x}(T))=\fmaj(T)-2\leaf(T)+3-2x,
	 				 		$$
	 				 		and 
	 				 		$$
	 				 		\omega(\tau_{x}(T))=\omega(T)-1.
	 				 		$$
	 				 		
	 				 		\item  If node $u_x$ is  positive, then we have 
	 				 		$$
	 				 		\fmaj(\tau_{x}(T))=\fmaj(T)+2\leaf(T)-3+2x,
	 				 		$$
	 				 		and 
	 				 		$$
	 				 		\omega(\tau_{x}(T))=\omega(T)+1.
	 				 		$$
	 				 	\end{itemize}
	 				 	
	 				 	\end{lemma}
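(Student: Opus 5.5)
The statement to prove is Lemma \ref{lem-tau-prop}, and the plan is direct verification from the definitions of $\fmaj$ and $\omega$, using Lemma \ref{lem-alpha-B} to dispose of $\alpha_B$.

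\textbf{Step 1: $\tau_x(T)$ lies in $\widetilde{\B}_{n,k}$ and keeps its combinatorial shape.} Flipping the sign of $u_x$ does not change the underlying ordered tree or the absolute values of the labels. Hence monotonicity of $|\cdot|$ along paths holds. The node $\infty$ stays the rightmost leaf on the right arm. The leaf set, viewed as a set of positions, is unchanged, so $\leaf(\tau_x(T))=\leaf(T)=k+1$. Likewise $\Sint(\tau_x(T))$ consists of the same nodes in the same order of absolute values. So $u_x$ is still the $x$-th singleton internal node, and only its sign has changed. Writing $\epsilon(T)=(\epsilon_1,\dots,\epsilon_{n-2k})$, this gives $\epsilon(\tau_x(T))$ equal to $\epsilon(T)$ with the $x$-th entry replaced by $1-\epsilon_x$.

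\textbf{Step 2: the change in $\fmaj$.} By Lemma \ref{lem-alpha-B}, $\alpha_B(\tau_x(T))=\alpha_B(T)$. In
$$\fmaj=\alpha_B+\sum_i(2\leaf+2i-3)\epsilon_i,$$
the value of $\leaf$ is the same for both trees and only the $x$-th term changes. Therefore
$$\fmaj(\tau_x(T))-\fmaj(T)=\pm(2\leaf(T)+2x-3).$$
The sign is $-$ when $u_x$ is negative in $T$, since then $\epsilon_x$ goes from $1$ to $0$. The sign is $+$ when $u_x$ is positive. This gives the two stated formulas.

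\textbf{Step 3: the change in $\omega$.} Since $\omega=\leaf-1+\nsint$, the leaf count is fixed, and $\nsint$ decreases by one when a negative singleton internal node becomes positive and increases by one in the opposite case. So $\omega$ changes by $\mp1$, matching the two cases.

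\textbf{Expected obstacle.} The only step needing care is Step 1. I must confirm that the ordering of $\Sint$ by absolute value, and hence the index $x$, is unaffected by the sign flip. I also want to note that no relabeling convention (such as the $\pm n$ parent of $\infty$) interferes, because $\tau_x$ acts on $T$ itself and not recursively. Everything nontrivial about $\alpha_B$ is already contained in Lemma \ref{lem-alpha-B}.
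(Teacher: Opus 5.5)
Your proposal is correct and follows essentially the same route as the paper: the paper also observes that $\tau_x$ preserves the leaves, and hence the singleton internal nodes and their ordering by absolute value. It then invokes Lemma~\ref{lem-alpha-B} for $\alpha_B$ and reads off the changes in $\fmaj$ and $\omega$ directly from their definitions, while your Step~1 spells out the bookkeeping the paper leaves to the reader (that $\epsilon$ changes only in its $x$-th entry).
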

	 				 
	 				 By the definition of $\tau_x$,  it is not hard to see that the transformation $\tau_x$ is an involution
	 				 on $\widetilde{\B}_{n,k}$ and $\tau_x$ and $\tau_y$ commute for all $x,y\in [n-2k]$.
	 				   For any  $S\subseteq [n-2k]$,  we can define the
	 				 function \[
	 				 \tau_S=\prod_{x\in S}\tau_x,
	 				 \]
	 				 where the product is the functional composition.      Hence the group $\mathbb{Z}_2^{n-2k}$
	 				 acts on $ \widetilde{\B}_{n,k}$ via the functions $\tau_S$ for $S\subseteq [n-2k]$.
	 				 Such an action is called the {\em generalized Foata–Strehl action  of type $B$} (GFS‑action of type $B$  for short) on increasing binary trees of type $B$.

	 				 \begin{lemma}\label{lem-b_{n,k}}
	 				 	For $n\geq 1$ and $k\geq 0$, we have
	 				 	$$
	 				 	b_{n,k}(q)=\sum_{T\in\widetilde{\mathcal T}_{n,k}^{*}}q^{\alpha_B(T)}.
	 				 	$$
	 				 	\end{lemma}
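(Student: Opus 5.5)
The plan is to show that the polynomials $f_{n,k}(q):=\sum_{T\in\widetilde{\mathcal T}^*_{n,k}}q^{\alpha_B(T)}$ satisfy the initial condition and the recurrence (\ref{eq-re-2}) of Theorem~\ref{thm-Han-2}. Since these determine $b_{n,k}(q)$ uniquely, this proves the lemma.

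\emph{Initial condition.} For $n=1$ a tree in $\widetilde{\B}_1$ is a root $\pm1$ with $\infty$ as its right child. It has one leaf, so $k=0$. Its root is a singleton internal node, so $\nsint=0$ forces the root to be $1$. Hence $f_{1,0}=q^0=1$. The vanishing conditions follow from (\ref{eq-sint}), which gives $\sint(T)=n-2k\ge 0$.

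\emph{Recurrence.} Take $T\in\widetilde{\mathcal T}^*_{n,k}$ and form $T'$ as in the definition of $\alpha_B$. I split according to the position of $\pm n$.

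\textbf{Case A: $\pm n$ is the parent of $\infty$, or a leaf whose parent $u$ becomes a leaf of $T'$.} Then $\leaf(T')=\leaf(T)=k+1$. The singleton internal nodes of $T'$ are those of $T$ other than $u$ (or other than $\pm n$), so they are all positive and $T'\in\widetilde{\mathcal T}^*_{n-1,k}$.

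Conversely, start from $T'\in\widetilde{\mathcal T}^*_{n-1,k}$. One choice is to make the node $\pm n$ the new parent of $\infty$; its sign is forced to be positive because it is a singleton. The other choices attach $\pm n$ as a left or right child of a leaf $\ell_i$, $1\le i\le k$. In that case $\ell_i$ is relabeled $|\ell_i|$, which is forced since it becomes a singleton of $T$. The sign of $n$ is then recovered from the sign of $\ell_i$ in $T'$, so this is a bijection.

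The increments are $2i-2$, $2i-1$ for $1\le i\le k$, and $2\leaf(T')-2=2k$. Together they give exactly $0,1,\dots,2k$, so this case contributes $[2k+1]_q\,f_{n-1,k}(q)$.

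\textbf{Case B: $\pm n$ is a leaf whose parent $u_i$ is a singleton internal node of $T'$.} Then $\leaf(T')=k$, and $u_i$ has two children in $T$, so its sign is unrestricted. The side at which $n$ hangs is forced, since $n$ occupies the vacant side of $u_i$. All other singletons of $T'$ must be positive.

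Thus $T$ corresponds bijectively to a tuple $(T_0,i,\pm u_i,\pm n)$, where $T_0\in\widetilde{\mathcal T}^*_{n-1,k-1}$ and $1\le i\le m=n+1-2k$. By Lemma~\ref{lem-alpha-B}, changing the sign of $u_i$ does not alter $\alpha_B(T')=\alpha_B(T_0)$.

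Using the last four rules in the definition of $\alpha_B$ with $\leaf(T')=k$ and summing over $i$:
\begin{itemize}
\item if $u_i$ is positive, the increments contribute $(q^{2k-1}+q^{2k})\sum_i q^{2(m-i)}=q^{2k-1}(1+q)[m]_{q^2}$;
\item if $u_i$ is negative, they contribute $(q^{4k-2}+q^{4k-1})\sum_i q^{2(i-1)}=q^{4k-2}(1+q)[m]_{q^2}$.
\end{itemize}
The total is $(1+q)q^{2k-1}(1+q^{2k-1})[n+1-2k]_{q^2}\,f_{n-1,k-1}(q)$, which is precisely the second term of (\ref{eq-re-2}).

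Adding the two cases gives the recurrence, and induction on $n$ finishes the proof.

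The main obstacle is checking that each case is genuinely a bijection. This requires tracking the sign relabeling of the parent in the definition of $T'$, and confirming that the left/right position of $n$ carries no extra freedom: it is forced in Case~B and encoded in the increment in Case~A. A wrong count here would introduce a spurious factor of $2$.
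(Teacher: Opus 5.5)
Your proposal is correct and follows essentially the same route as the paper: both show that $\sum_{T\in\widetilde{\mathcal T}^*_{n,k}}q^{\alpha_B(T)}$ satisfies the recurrence (\ref{eq-re-2}) by deleting $\pm n$. Your Case A merges the paper's cases (i) and (ii), giving $[2k]_q+q^{2k}=[2k+1]_q$ at once, and your Case B is the paper's case (iii), with the parent's sign normalized to positive via Lemma~\ref{lem-alpha-B}.
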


	 				 \begin{proof}
	 				 Let $$c_{n,k}(q)=\sum_{T\in\widetilde{\mathcal T}_{n,k}^{*}}q^{\alpha_B(T)}.$$  For $n\ge 1$, the only tree $T$ in
	 				 	$\widetilde{\mathcal T}_{n,0}^{*}$  consists of  the right chain
	 				 	$1, 2,\ldots, n,\infty$ and $\alpha_B(T)=0$. Hence $c_{n,0}(q)=1$. Moreover, $c_{n,k}(q)=0$ if $k<0$ or $k>\lfloor n/2\rfloor$.
	 				 	
	 				 	Let $1\le k\le\lfloor n/2\rfloor$ and $T\in\widetilde{\mathcal T}_{n,k}^{*}$.

	 				 	\noindent\textbf{Case (i).}  The node $\pm n$ is a child of a singleton internal node $u$.\\
	 				 	In this case, we can 
	 				 	construct an increasing binary tree  $T'$ of type $B$   by   removing  node $n$ (resp., $-n$) together with its incident edge  and  relabeling  $u$ as $|u|$ (resp., $-|u|$).
	 				 	Clearly, we have $T'\in \widetilde{\mathcal{T}}_{n-1,k}^{*}$. 
	 				 	
	 				 	 Conversely,   for each
	 				 	$T'\in\widetilde{\mathcal{T}}_{n-1,k}^{*}$,  we can recover a tree $T\in \widetilde{\mathcal{T}}_{n,k}^{*}$ as follows:
	 				 	\begin{itemize}
	 				 	\item Insert the node $n$ (resp. $-n$)   at the two positions of each finite leaf $\ell$ if $\ell$ is positive (resp. negative).
	 				 	\item Relabel node $\ell$ by $|\ell|$.
	 				 	\end{itemize}
	 				 	 By the first two cases in the definition of $\alpha_B(T)$, the corresponding increments are $0, 1, \ldots, 2k-1.$
	 				 	 Thus, each
	 				 	position gives exactly one tree in $\widetilde{\mathcal T}_{n,k}^{*}$, and the contribution of this case is$$ \sum_{r=0}^{2k-1}q^r c_{n-1,k}(q) = [2k]_q c_{n-1,k}(q).$$

	 				 		\noindent\textbf{Case (ii).}  The node $n$ is  the parent of node $\infty$.\\
	 				 	In this case, we can 
	 				 	construct an increasing binary tree $T'$ of type $B$  by   removing  node  $\infty$ and relabeling node $n$ by $\infty$. 
	 				 	Clearly, we have $T'\in \widetilde{\mathcal{T}}_{n-1,k}^{*}$. 
	 				 	
	 				 	Conversely,   for each
	 				 	$T'\in\widetilde{\mathcal{T}}_{n-1,k}^{*}$, the node $n$ can be inserted into the right arm of $T'$ so that the new inserted node becomes the parent of the node $\infty$.   By  the definition of $\alpha_B(T)$, the corresponding increment is given by $2k  $.  Thus, the contribution of this case is$$ q^{2k} c_{n-1,k}(q).$$

	 				 		\noindent\textbf{Case (iii).}  The node $\pm n$ is a child of an internal node $u$ which  has two children.\\
	 				 	In this case, we can 
	 				 	construct an increasing binary tree $T'$ of type $B$ by   removing  node  $\pm n$ and changing   the sign of  the node $u$ to be positive.   Then $T'\in\widetilde{\mathcal T}_{n-1,k-1}^{*}$.

	 				 	Conversely,  for any tree $T'\in\widetilde{\mathcal T}_{n-1,k-1}^{*}$, 
	 				 	let $\Sint(T')=\{u_1, u_2, \ldots, u_{M}\}$ with $M=n+1-2k$ and $|u_1|<|u_2|<\ldots<|u_M|$.  Then we can 
	 				 	 choose  $u_i$ and  change its sign to negative or positive, and then attach  node $-n$ or $n$ to its empty child position.  Lemma \ref{lem-alpha-B} states that changing the sign of this singleton internal node $u_i$ does not change $\alpha_B(T')$. The four increments of $\alpha_B$ are given by
	 				 	\[
	 				 	\begin{array}{ll}
	 				 		2(M-i)+2k-1, & 2(M-i)+2k,\\[2mm]
	 				 		4k+2i-4,     & 4k+2i-3.
	 				 	\end{array}
	 				 	\]
	 				 	Hence, the contribution is given by
	 				 	\[
	 				 	\begin{aligned}
	 				 		&\sum_{i=1}^{M} \left( q^{2(M-i)+2k-1}+q^{2(M-i)+2k}
	 				 		+q^{4k+2i-4}+q^{4k+2i-3}
	 				 		\right)c_{n-1,k-1}(q)\\
	 				 		&\qquad =q^{2k-1}(1+q)(1+q^{2k-1}) [M]_{q^2}c_{n-1,k-1}(q).
	 				 	\end{aligned}
	 				 	\]
	 				 	Hence,
	 				 	\[
	 				 	\begin{aligned}
	 				 		c_{n,k}(q) ={}&[2k+1]_q c_{n-1,k}(q)\\
	 				 		&+q^{2k-1}(1+q)(1+q^{2k-1}) [n+1-2k]_{q^2}c_{n-1,k-1}(q).
	 				 	\end{aligned}.
	 				 	\]
	 				 	Hence, $c_{n,k}$ and $b_{n,k}$ satisfy the same recurrence relation and initial conditions. This gives
	 				 	\[
	 				 	b_{n,k}(q)=
	 				 	\sum_{T\in\widetilde{\mathcal T}_{n,k}^{*}}q^{\alpha_B(T)},
	 				 	\]
	 				 	completing the proof.
	 				 	\end{proof}

	 				 	Now we are in the position to complete the proof of Theorem \ref{thm-gamma-B}.

	 				 	\noindent{\bf Proof of  Theorem \ref{thm-gamma-B}.} First we aim to verify the following expansion:
	 				 	\begin{equation}\label{eq-expansion-B}
	 				 		\sum\limits_{T\in \widetilde{\B}_n}t^{\omega(T)}q^{\fmaj(T)}=\sum\limits_{k=0}^{\lfloor {n/2}\rfloor}  
	 				 		b_{n, k}(q)t^{k}(-tq^{2k+1}; q^2)_{n-2k},
	 				 		\end{equation}
	 				 		where 
	 				 		$$
	 				 		b_{n,k}(q)=\sum_{T\in \widetilde{\B}^*_{n,k}}q^{\alpha_B(T)}.
	 				 		$$

	 				 		Recall that $\widetilde{\B}^*_{n,k}$ denotes the set of increasing binary trees of type $B$ in $\widetilde{\B}_n$ without any negative  singleton internal nodes and with exactly $k+1$ leaves.
	 				 		Given a tree  $T\in \widetilde{\B}^*_{n,k}$, we define  $\mathrm{Orbit}(T)$ to be the set of trees $S$ that can be transformed
	 				 		to $T$ via the GFS-action  of type $B$.  
	 				 	 In view of Lemma \ref{lem-tau-prop}, it follows that  the number of  leaves is invariant  under GFS-action  of type $B$.  Hence, the GFS-action of type $B$
	 				 	divides $\widetilde{\B}_{n,k}$ into disjoint orbits. 
	 				 	Moreover, each orbit contains a unique tree in $\widetilde{\B}^*_{n,k}$, obtained by replacing every negative singleton internal node with a positive one.
	 				 	By (\ref{eq-sint}),  we have
	 				 	\begin{equation}\label{eq-bar-T-1-B}
	 				 		\sint(T)=n+2-2\leaf(T)=n-2k
	 				 	\end{equation}
	 				 	for any $T\in \widetilde{\B}^*_{n,k}$.
	 				 	Given that  $\nsint(T)=0$ and $\leaf(T)=k+1$, we deduce that
	 				 	 	\begin{equation}\label{eq-bar-T-2-B}
	 				 	 	\omega(T)=\leaf(T)-1+\nsint(T)=k
	 				 	 \end{equation}
	 				 	for any $T\in \widetilde{\B}^*_{n,k}$.
	 				 	Then, by Lemma~\ref{lem-tau-prop}, we deduce that
	 				 		\[
	 				 	\begin{aligned}
	 				 		\sum\limits_{T\in \widetilde{\B}_n}t^{\omega(T)}q^{\fmaj(T)}
	 				 		&= \sum_{k\geq 0}\sum_{T\in\widetilde{\B}^*_{n,k} }\sum_{S\in \mathrm{Orbit}(T)} t^{\omega(S)}q^{\fmaj(S)}\\
	 				 		&= \sum_{k\geq 0}\sum_{T\in\widetilde{\B}^*_{n,k} }t^{k}q^{\fmaj(T)}\prod\limits_{i=1}^{n-2k}(1+tq^{2k+2i-1})\\
	 				 		&= \sum_{k\geq 0}\sum_{T\in\widetilde{\B}^*_{n,k} }t^{k}q^{\alpha_B(T)}\prod\limits_{i=1}^{n-2k}(1+tq^{2k+2i-1})\\
	 				 		&=  \sum_{k\geq 0} b_{n,k}(q)t^{k}(-tq^{2k+1}; q^2)_{n-2k},
	 				 	\end{aligned}
	 				 	\]
	 				 	where the second equality follows from (\ref{eq-bar-T-1-B}) and (\ref{eq-bar-T-2-B}), the third equality follows from    the relation $\fmaj(T)=\alpha_B(T)$, and 
	 			the last equality follows from Lemma \ref{lem-b_{n,k}}. 
	 				 	Then the first equality stated in (\ref{eq-gamma-2})  follows directly from (\ref{eq-expansion-B}) and  (\ref{eq-Han-B}), completing the proof. \qed
	 				 	
	 				 	\section{The divisibility of $\gamma$-coefficients}
	 				 	This section is devoted to the combinatorial interpretations  of  the quotients 
	 				 	$a_{n,k}(q)/ (-q;q)_{k-1}$ and $b_{n,k}(q)/(1+q)^k(-q; q^2)_k$ and the polynomial $G^*_{2n}(q)$. 
	 				 	
	 				 	\noindent{\bf Proof of Theorem \ref{thm-d_{n,k}(q)}.}
	 				 		For $n\geq1$, the unique tree $T$ in $\mathcal{AT}_{n,1}$ is the
	 				 		right chain whose nodes are labeled $1,2,\ldots,n$ from the root
	 				 		to the leaf. By the definition of $\alpha(T)$, we have $\alpha(T)=0$ and thus 
	 				 		\[
	 				 		d_{n,1}(q)=1.
	 				 		\]
	 				 		The initial condition and recurrence \eqref{eq-a_{n,k}(q)} also give
	 				 		$a_{n,1}(q)=1$.
	 				 		
	 				 		A binary tree with $k$ leaves has $k-1$ internal nodes with two
	 				 		children, and therefore has at least $2k-1$ nodes. Thus
	 				 		$\mathcal{AT}_{n,k}$ is empty whenever
	 				 		$k>\lfloor(n+1)/2\rfloor$. In this case, $d_{n,k}(q)=0$, while
	 				 		$a_{n,k}(q)=0$ by initial condition in
	 				 		\eqref{eq-a_{n,k}(q)}. It remains to consider
	 				 		\[
	 				 		2\leq k\leq\lfloor(n+1)/2\rfloor.
	 				 		\]
	 				 		
	 				 		Let $T\in\mathcal{AT}_{n,k}$, and let $\widehat{T}$ be the tree
	 				 		obtained from $T$ by removing node $n$ together with its incident
	 				 		edge. Since $T$ is increasing, node $n$ must be a leaf. Let $u$
	 				 		be its parent. We proceed by considering two cases.
	 				 		
	 				 		\noindent{\bf Case 1.} The node $u$ is a singleton internal node.
	 				 		
	 				 		By the definition of an Andr\'e tree, node $n$ is the right child
	 				 		of $u$. Removing node $n$ turns $u$ into a leaf, so
	 				 		\[
	 				 		\widehat{T}\in\mathcal{AT}_{n-1,k}.
	 				 		\]
	 				 		Conversely, let $\widehat{T}\in\mathcal{AT}_{n-1,k}$, and write
	 				 		\[
	 				 		\Leaf(\widehat{T})
	 				 		=\{\ell_1<\ell_2<\cdots<\ell_k\}.
	 				 		\]
	 				 		For each $1\leq i\leq k$, attaching node $n$ as the right child
	 				 		of $\ell_i$ gives a tree in $\mathcal{AT}_{n,k}$ belonging to
	 				 		Case~1. Clearly, the deletion and insertion procedures are inverse
	 				 		to each other. By the definition of $\alpha(T)$, the corresponding
	 				 		increment is $i-1$. Hence the contribution of this case is
	 				 		\[
	 				 		\sum_{i=1}^{k}q^{i-1}d_{n-1,k}(q)
	 				 		=[k]_q\,d_{n-1,k}(q).
	 				 		\]
	 				 		
	 				 		\noindent{\bf Case 2.} The node $u$ has two children.
	 				 		
	 				 		The other child of $u$ has a label smaller than $n$. Since the
	 				 		smaller child of every internal node in an Andr\'e tree is its
	 				 		right child, node $n$ must be the left child of $u$. Removing
	 				 		node $n$ turns $u$ into a singleton internal node with a right
	 				 		child. Therefore,
	 				 		\[
	 				 		\widehat{T}\in\mathcal{AT}_{n-1,k-1}.
	 				 		\]
	 				 		
	 				 		Conversely, let $\widehat{T}\in\mathcal{AT}_{n-1,k-1}$, and write
	 				 		\[
	 				 		\Sint(\widehat{T})
	 				 		=\{u_1<u_2<\cdots<u_m\}.
	 				 		\]
	 				 		Every singleton internal node of an Andr\'e tree has only a right
	 				 		child. For each $1\leq i\leq m$, attach node $n$ as the left child
	 				 		of $u_i$. Since the right child of $u_i$ has a label smaller than
	 				 		$n$, the resulting tree belongs to $\mathcal{AT}_{n,k}$. Again,
	 				 		the deletion and insertion procedures are inverse to each other.
	 				 		
	 				 		Note that the tree $\widehat{T}$ has $k-1$ leaves.  Then, by (\ref{eq-sint}), we have 
	 				 		\[
	 				 		m=n+2-2k.
	 				 		\]
	 				 		By the definition of $\alpha(T)$, attaching node $n$ as the left
	 				 		child of $u_i$ increases $\alpha$ by
	 				 		\[
	 				 		\leaf(\widehat{T})+m-i=k-1+m-i.
	 				 		\]
	 				 		Thus the contribution of this case is
	 				 		\[
	 				 		\begin{aligned}
	 				 			\sum_{i=1}^{m}q^{k-1+m-i}d_{n-1,k-1}(q)
	 				 			&=q^{k-1}[m]_q\,d_{n-1,k-1}(q)\\
	 				 			&=q^{k-1}[n+2-2k]_q\,d_{n-1,k-1}(q).
	 				 		\end{aligned}
	 				 		\]
	 				 		
	 				 		Combining the two cases, we obtain
	 				 		\[
	 				 		d_{n,k}(q)
	 				 		=[k]_q\,d_{n-1,k}(q)
	 				 		+q^{k-1}[n+2-2k]_q\,d_{n-1,k-1}(q).
	 				 		\]
	 				 		Using
	 				 		\[
	 				 		(-q;q)_{k-1}
	 				 		=(1+q^{k-1})(-q;q)_{k-2},
	 				 		\]
	 				 		we deduce that
	 				 		\[
	 				 		\begin{aligned}
	 				 			(-q;q)_{k-1}d_{n,k}(q)
	 				 			={}&[k]_q\,(-q;q)_{k-1}d_{n-1,k}(q)\\
	 				 			&+(1+q^{k-1})q^{k-1}[n+2-2k]_q\,
	 				 			(-q;q)_{k-2}d_{n-1,k-1}(q).
	 				 		\end{aligned}
	 				 		\]
	 				 		This is precisely the recurrence \eqref{eq-a_{n,k}(q)} for
	 				 		$a_{n,k}(q)$. Since the initial   conditions also
	 				 		agree,  we deduce that
	 				 		\[
	 				 		a_{n,k}(q)=(-q;q)_{k-1}d_{n,k}(q),
	 				 		\]
	 				 		completing the proof. \qed

	 				  \noindent{\bf Proof of Theorem  \ref{thm-e_{n,k}(q)}.}
	 				 	Recall that $b_{n,k}(q)$ satisfies the recurrence
	 				 	\[
	 				 	\begin{aligned}
	 				 		b_{n,k}(q)={}&[2k+1]_q b_{n-1,k}(q)\\
	 				 		&+(1+q)q^{2k-1}(1+q^{2k-1})
	 				 		[n+1-2k]_{q^2}b_{n-1,k-1}(q),
	 				 	\end{aligned}
	 				 	\]
	 				 	with $b_{1,0}(q)=1$, and $b_{n,k}(q)=0$ whenever
	 				 	$k<0$ or $k>\lfloor n/2\rfloor$.
	 				 	
	 				 	For $n\geq 1$, the only tree $T$ in $\NT_{n,0}$ is the right chain
	 				 	whose nodes are labeled
	 				 	\[
	 				 	1,2,\ldots,n,\infty
	 				 	\]
	 				 	from the root to the leaf. By the definition of $\alpha_B(T)$,  we have $\alpha_B(T)=0$. The initial condition and recurrence also give
	 				 	$b_{n,0}(q)=1$. Thus
	 				 	\[
	 				 	e_{n,0}(q)=1=b_{n,0}(q).
	 				 	\]
	 				 	
	 				 	A binary tree with $k+1$ leaves has $k$ internal nodes with two
	 				 	children, and hence has at least $2k+1$ nodes. Since every tree in
	 				 	$\widetilde{\mathcal T}_n$ has $n+1$ nodes, $\mathsf{NT}_{n,k}$ is
	 				 	empty whenever $k>\lfloor n/2\rfloor$. It remains to consider
	 				 	\[
	 				 	1\leq k\leq\lfloor n/2\rfloor.
	 				 	\]
	 				 	
	 				 	Let $T\in\mathsf{NT}_{n,k}$. Since every finite node of $T$ has a
	 				 	positive label, the node with absolute value $n$ is labeled $n$.
	 				 	Moreover, the absolute values of the labels increase along every path
	 				 	from the root to a leaf. Thus node $n$ has no finite child, so it is
	 				 	either a leaf or the parent of node $\infty$. Let $T'$ be the tree
	 				 	obtained from $T$ in the recursive definition of $\alpha_B(T)$. We
	 				 	consider the following three cases.
	 				 	
	 				 	\medskip
	 				 	\noindent{\bf Case 1.} The node $n$ is a leaf whose parent is a
	 				 	singleton internal node.
	 				 	
	 				 	Removing node $n$ turns its parent into a positive leaf, and hence
	 				 	$T'\in\mathsf{NT}_{n-1,k}$. Conversely, let
	 				 	$T'\in\mathsf{NT}_{n-1,k}$ and write
	 				 	\[
	 				 	\Leaf(T')=\{\ell_1<\ell_2<\cdots<\ell_k<\infty\}.
	 				 	\]
	 				 	For each $1\leq i\leq k$, attaching node $n$ as the left or right
	 				 	child of $\ell_i$ gives a tree in $\mathsf{NT}_{n,k}$ in which the node $n$ is a leaf whose parent is a
	 				 	singleton internal node.  The deletion and insertion procedures are inverses to each
	 				 	other. By the first two cases in the definition of $\alpha_B(T)$, the
	 				 	corresponding increments are $2i-2$ and $2i-1$. Therefore, the
	 				 	contribution of this case is given by
	 				 	\[
	 				 	\sum_{i=1}^{k}\bigl(q^{2i-2}+q^{2i-1}\bigr)e_{n-1,k}(q)
	 				 	=[2k]_q e_{n-1,k}(q).
	 				 	\]
	 				 	
	 				 	\medskip
	 				 	\noindent{\bf Case 2.} The node $n$ is the parent of node $\infty$.
	 				 	
	 				 	Removing node $\infty$ and relabeling node $n$ by $\infty$ gives a
	 				 	tree $T'\in\mathsf{NT}_{n-1,k}$. Conversely, every tree in
	 				 	$\mathsf{NT}_{n-1,k}$ gives a unique tree in this case by inserting
	 				 	node $n$ immediately above node $\infty$ on the right arm. Since
	 				 	$\leaf(T')=k+1$, the corresponding increment of $\alpha_B$ is
	 				 	\[
	 				 	2\leaf(T')-2=2k.
	 				 	\]
	 				 	Thus the contribution of this case  is
	 				 	\[
	 				 	q^{2k}e_{n-1,k}(q).
	 				 	\]
	 				 	
	 				 	\medskip
	 				 	\noindent{\bf Case 3.} The node $n$ is a leaf whose parent has two
	 				 	children.
	 				 	
	 				 	Removing node $n$ turns its parent into a positive singleton internal
	 				 	node. Hence $T'\in\mathsf{NT}_{n-1,k-1}$. Conversely, let
	 				 	$T'\in\mathsf{NT}_{n-1,k-1}$ and write
	 				 	\[
	 				 	\Sint(T')=\{u_1,u_2,\ldots,u_M\},
	 				 	\qquad u_1<u_2<\cdots<u_M.
	 				 	\]
	 				 Note that 	the tree $T'$ has $n$ nodes and $k$ leaves.  By (\ref{eq-sint}), it follows that
	 				 $
	 				   M=n+1-2k.
	 				 	$
	 				 	For each $1\leq i\leq M$, attaching node $n$ at the empty child
	 				 	position of $u_i$ gives a tree in $\mathsf{NT}_{n,k}$ belonging to
	 				 	this case. Again, the deletion and insertion procedures are inverses
	 				 	 of each other. Since $u_i$ is positive and $\leaf(T')=k$, the
	 				 	corresponding increment of $\alpha_B$ is given by 
	 				 	\[
	 				 	2\leaf(T')-1+2(M-i)=2k-1+2(M-i).
	 				 	\]
	 				 	Therefore, the contribution of  this case  is
	 				 	\[
	 				 	\begin{aligned}
	 				 		\sum_{i=1}^{M}q^{2k-1+2(M-i)}e_{n-1,k-1}(q)
	 				 		&=q^{2k-1}[M]_{q^2}e_{n-1,k-1}(q)\\
	 				 		&=q^{2k-1}[n+1-2k]_{q^2}e_{n-1,k-1}(q).
	 				 	\end{aligned}
	 				 	\]
	 				 	
	 				 	  Combining
	 				 the above three  cases gives
	 				 	\[
	 				 	e_{n,k}(q)=[2k+1]_q e_{n-1,k}(q)
	 				 	+q^{2k-1}[n+1-2k]_{q^2}e_{n-1,k-1}(q).
	 				 	\]
	 				 	Since
	 				 	\[
	 				 	(1+q)^k(-q;q^2)_k
	 				 	=(1+q)(1+q^{2k-1})(1+q)^{k-1}(-q;q^2)_{k-1},
	 				 	\]
	 				 	we obtain
	 				 	\[
	 				 	\begin{aligned}
	 				 		&(1+q)^k(-q;q^2)_k e_{n,k}(q)\\
	 				 		&\quad=[2k+1]_q(1+q)^k(-q;q^2)_k e_{n-1,k}(q)\\
	 				 		&\qquad +(1+q)q^{2k-1}(1+q^{2k-1})[n+1-2k]_{q^2}
	 				 		(1+q)^{k-1}(-q;q^2)_{k-1}e_{n-1,k-1}(q).
	 				 	\end{aligned}
	 				 	\]
	 				 	Thus $(1+q)^k(-q;q^2)_k e_{n,k}(q)$ satisfies the same recurrence relation and the same initial conditions as
	 				 	$b_{n,k}(q)$.  Therefore, 
	 				 	induction on $n$ gives
	 				 	\[
	 				 	b_{n,k}(q)=(1+q)^k(-q;q^2)_k e_{n,k}(q)
	 				 	\]
	 				  as desired. \qed

	 			\noindent{\bf Proof of Theorem \ref{thm:positive-G-star}.}	 
	 				Since $G_0^{*}(q)=1$, the assertion is immediate for $n=0$. Let
	 				$n\geq 1$. Recall that
	 				\[
	 				B_m(t,q)=\sum_{k=0}^{\lfloor m/2\rfloor}
	 				b_{m,k}(q)t^k(-tq^{2k+1};q^2)_{m-2k}.
	 				\]
	 				Taking $m=2n$ and $t=-q^{-2n-1}$ gives
	 				\[
	 				\begin{aligned}
	 					B_{2n}(-q^{-2n-1},q)
	 					=\sum_{k=0}^{n}{}&b_{2n,k}(q)(-q^{-2n-1})^k\\
	 					&\quad\times(q^{2k-2n};q^2)_{2n-2k}.
	 				\end{aligned}
	 				\]
	 				For $0\leq k<n$, the factor
	 			\[
	 			(q^{2k-2n};q^2)_{2n-2k}
	 			=\prod_{j=0}^{2n-2k-1}
	 			(1-q^{2k-2n+2j})
	 			\]
	 			vanishes, since the factor corresponding to $j=n-k$ is
	 			$1-q^0$. Thus only the term $k=n$ remains, and hence
	 			\[
	 			B_{2n}(-q^{-2n-1},q)
	 			=(-1)^nq^{-n(2n+1)}b_{2n,n}(q).
	 			\]
	 			It follows that
	 			\[
	 			E_{2n}^{*}(q)=q^{-n^2}b_{2n,n}(q).
	 			\]
	 				 
	 				Then Theorem~\ref{thm-e_{n,k}(q)} yields
	 				\[
	 				\begin{aligned}
	 					G_{2n}^{*}(q)
	 					&=\frac{q^{-n^2}b_{2n,n}(q)}{(1+q)^n(-q;q^2)_n}\\
	 					&=q^{-n^2}e_{2n,n}(q)\\
	 					&=\sum_{T\in\mathsf{NT}_{2n,n}}q^{\alpha_B(T)-n^2}.
	 				\end{aligned}
	 				\]
	 				
	 			It remains to verify that the last expression is a polynomial and that
	 			none of its coefficients vanishes. For $m\geq 1$ and
	 			$0\leq k\leq\lfloor m/2\rfloor$, define
	 			\[
	 			h_{m,k}(q)=q^{-k^2}e_{m,k}(q),
	 			\]
	 			and set $h_{m,k}(q)=0$ whenever $k<0$ or $k>\lfloor m/2\rfloor$.
	 			In the proof of Theorem~\ref{thm-e_{n,k}(q)}, we have proved that 
	 			\[
	 			e_{m,k}(q)=[2k+1]_q e_{m-1,k}(q)
	 			+q^{2k-1}[m+1-2k]_{q^2}e_{m-1,k-1}(q).
	 			\]
	 			Since
	 			\[
	 			k^2-(k-1)^2=2k-1,
	 			\]
	 			we obtain
	 			\begin{equation}\label{eq:h-recurrence}
	 				h_{m,k}(q)=[2k+1]_q h_{m-1,k}(q)
	 				+[m+1-2k]_{q^2}h_{m-1,k-1}(q).
	 			\end{equation}
	 			Moreover, $h_{m,0}(q)=1$ for every $m\geq 1$.
	 			
	 			We claim that, for $0\leq k\leq\lfloor m/2\rfloor$,
	 			$h_{m,k}(q)$ is a polynomial of degree
	 			\[
	 			D_{m,k}=k(2m-3k-1)
	 			\]
	 			and has positive coefficients in every degree from $0$ to $D_{m,k}$.
	 			We proceed by induction on $m$. The case $m=1$ is clear. Let $m\geq2$
	 			and assume that the assertion holds for $m-1$. The assertion is also
	 			clear when $k=0$. Suppose first that $m=2k$. Then
	 			$h_{m-1,k}(q)=0$, and
	 			\eqref{eq:h-recurrence} reduces to
	 			\[
	 			h_{2k,k}(q)=h_{2k-1,k-1}(q).
	 			\]
	 			The result follows from the induction hypothesis, since
	 			\[
	 			D_{2k-1,k-1}=k(k-1)=D_{2k,k}.
	 			\]
	 			
	 			Now suppose that $m>2k$. By the induction hypothesis,
	 			$h_{m-1,k}(q)$ has positive coefficients in every degree from $0$ to
	 			$D_{m-1,k}$. Since
	 			\[
	 			[2k+1]_q=1+q+\cdots+q^{2k}
	 			\]
	 			and
	 			\[
	 			D_{m-1,k}+2k=D_{m,k},
	 			\]
	 			the first term on the right-hand side of \eqref{eq:h-recurrence} is a
	 			polynomial of degree $D_{m,k}$ and has positive coefficients in every
	 			degree from $0$ to $D_{m,k}$. By the induction hypothesis, the second
	 			term has nonnegative coefficients and degree $D_{m,k}$, since
	 			\[
	 			D_{m-1,k-1}+2(m-2k)=D_{m,k}.
	 			\]
	 			Thus $h_{m,k}(q)$ is a polynomial of degree $D_{m,k}$ and has positive
	 			coefficients in every degree from $0$ to $D_{m,k}$, proving the claim.
	 			
	 			Finally,
	 			\[
	 			G_{2n}^{*}(q)=h_{2n,n}(q)
	 			\qquad\text{and}\qquad
	 			D_{2n,n}=n(n-1).
	 			\]
	 			The result follows.
	 				 
   			    \section*{Acknowledgments}
   This work was supported by the National Natural Science
   Foundation of China grant 12471318.


\end{document}